\DeclareMathOperator\Gal{\textrm{Gal}}
\DeclareMathOperator\Aut{\textrm{Aut}}
\DeclareMathOperator\ld{\textrm{ld}}
\begin{document}
\providecommand{\keywords}[1]{\textbf{\textit{Keywords: }} #1}
\newtheorem{theorem}{Theorem}[section]
\newtheorem{lemma}[theorem]{Lemma}
\newtheorem{prop}[theorem]{Proposition}
\newtheorem{kor}[theorem]{Corollary}
\theoremstyle{definition}
\newtheorem{defi}{Definition}[section]
\theoremstyle{remark}
\newtheorem{remark}{Remark}
\newtheorem{problem}{Problem}
\newtheorem{question}{Question}
\newtheorem{conjecture}{Conjecture}
\newtheorem{condenum}{Condition}

\newcommand{\cc}{{\mathbb{C}}}   
\newcommand{\ff}{{\mathbb{F}}}  
\newcommand{\nn}{{\mathbb{N}}}   
\newcommand{\qq}{{\mathbb{Q}}}  
\newcommand{\rr}{{\mathbb{R}}}   
\newcommand{\zz}{{\mathbb{Z}}}  
\newcommand{\fp}{{\mathfrak{p}}}
\newcommand{\fP}{{\mathfrak{P}}}
\newcommand{\ra}{{\rightarrow}}
\newcommand{\divides}{\,|\,}

\newcommand\DN[1]{{{#1}}}
\newcommand\JK[1]{{{#1}}}

\title{The Hilbert--Grunwald specialization property  over number fields}
\author{Joachim K\"onig}
\author{Danny Neftin}

\begin{abstract}
Given a finite group $G$ and a number field $K$, we investigate the following question: Does there exist a Galois extension $E/K(t)$ with group $G$ 
whose set of specializations yields solutions to all Grunwald problems for the group $G$, outside a finite set of primes? Following previous work,  such a Galois extension would be said to have the ``Hilbert--Grunwald property". In this paper we reach a complete  classification of groups $G$ which admit \JK{an extension} with the Hilbert--Grunwald property over fields such as $K=\mathbb{Q}$.
\JK{We thereby also complete the determination of the ``local dimension" of finite groups over $\mathbb{Q}$}. 
\end{abstract}
\maketitle

\section{Introduction and main results}

Given a number field $K$, a finite group $G$, a finite set $\mathcal{S}$ of primes of $K$ and for each $\fp\in \mathcal{S}$ a Galois extension $L^{(\fp)}/K_\fp$ whose Galois group embeds into $G$, the {\it Grunwald problem} $(G,(L^{(\fp)}/K_\fp), \fp\in \mathcal{S})$ is the question whether there exists a Galois extension of $K$ with group $G$ whose completion at $\fp$ equals $L^{(\fp)}/K_\fp$ for each $\fp\in \mathcal{S}$. Such an extension is then called a {\it solution} to the underlying Grunwald problem.
Following \cite{KLN} and \cite{KN}, we say that  
 a Galois extension $E/F$ of (transcendence degree $d\ge 1$) function fields over a number field $K$ 
 with group $G$  has the {\it Hilbert--Grunwald property} if the following holds:

(HG) There exists a finite set $\mathcal{S}_0$ of primes of $K$ such that every Grunwald problem $(G,(L^{(\fp)}/K_\fp), \fp\in \mathcal{S})$, with $\mathcal{S}$ disjoint from $\mathcal{S}_0$, has a solution inside the set of specializations $E_{t_0}/K$ of $E/F$ at degree-$1$ places $t_0$ of $K$.

The terminology was introduced (with a more restrictive meaning) in \cite{DG}, which proved the following:\\
Given any $K$-regular
 $G$-extension $E/K(t)$, there exists a finite set $\mathcal{S}_0$ of primes of $K$ such that for every finite set $\mathcal{S}$ of primes disjoint from $\mathcal{S}_0$ and every {\it unramified} Grunwald problem on $\mathcal{S}$ (i.e., where all the prescribed local extensions are unramified), there exists a solution inside the set of specializations of $E/K(t)$.

Extending this property 
to the case of ramified Grunwald problems meets some obvious obstacles. These come (among others) from the so-called Specialization Inertia Theorem (cf.\ Proposition \ref{beckmann}), which, as a special case, implies that primes $\fp$  of $K$ for which no branch point of $E/K(t)$ is $K_\fp$-rational cannot ramify in any specialization of $E/K(t)$ (with possibly finitely many exceptional $p$). 
Using this fact, \cite{Leg_Pisa}  
shows \footnote{This is a consequence of Theorem 3.1 and Proposition 3.5 of \cite{Leg_Pisa}, together with the Specialization Inertia Theorem.} that for every group $G$ which occurs as 
 the Galois group of a $K$-regular extension $E/K(t)$, one can construct another $K$-regular $G$-extension $\tilde{E}/K(t)$ which does not have the Hilbert--Grunwald property. 

In view of this, it is natural to investigate the Hilbert--Grunwald property not with respect to a given extension, but with respect to merely a given {\it group} (and number field), i.e., to ask the following:

\begin{question}
\label{ques:hg2}
Given a number field $K$, for which groups $G$ does there exist a Galois extension $E/F$ of transcendence degree $1$ over $K$ with group $G$ which has the Hilbert--Grunwald property?
\end{question}

\JK{In this paper,} we will answer Question \ref{ques:hg2} in full for large classes of number fields $K$ (in particular, including the case $K=\mathbb{Q}$). 
Concretely we show:
\begin{theorem}
\label{thm:hgmain}
Let $K\subset \mathbb{R}$ be a real number field such that the cyclotomic extensions $K(\zeta_p)$, with $p$ running through the prime numbers, are pairwise distinct. Then the following are equivalent:
\begin{itemize}
    \item[i)] There exists a $K$-regular Galois extension $E/K(t)$ with group $G$ possessing the Hilbert--Grunwald property.
    \item[ii)] $G$ is either a cyclic group of order $2$ or an odd prime power; or $G$ is a Frobenius group whose kernel and complement both are cyclic groups of order either $2$ or an odd prime power.
\end{itemize}
\end{theorem}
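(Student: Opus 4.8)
The plan is to prove the two implications separately: (ii) $\Rightarrow$ (i) by an explicit construction, and (i) $\Rightarrow$ (ii) by extracting group-theoretic constraints from the geometry of specializations. Two features of $K$ will be used throughout. The hypothesis $K\subset\rr$ supplies a real place whose decomposition in any specialization is an involution; and the pairwise distinctness of the $K(\zeta_p)$ guarantees, via Chebotarev, that the cyclotomic character $\fp\mapsto (q_\fp \bmod N)$ attains every class of $(\zz/N\zz)^\times$ as $\fp$ varies, which is what lets one convert the Branch Cycle Lemma into arithmetic conditions on the inertia generators (branch cycles) $g_1,\dots,g_r$ of the cover. Recall also that at all but finitely many $\fp$ the prescribed local extensions $L^{(\fp)}/K_\fp$ are tame, hence of the metacyclic shape $\langle a\rangle\rtimes\langle\phi\rangle$ with $\phi a\phi^{-1}=a^{q_\fp}$; the finitely many residue characteristics dividing $|G|$ are absorbed into $\mathcal{S}_0$.

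For necessity, fix a $K$-regular $G$-extension $E/K(t)$ with property (HG). The structural backbone is a covering statement. For every cyclic subgroup $\langle a\rangle\le G$ there is a positive density of primes $\fp$ (those split in $K(\zeta_{|a|})$) at which a \emph{totally ramified} tame extension of $K_\fp$ with group $\langle a\rangle$ exists; (HG) forces this to be matched by a specialization, whose inertia at $\fp$ is, by the Specialization Inertia Theorem (Proposition~\ref{beckmann}), conjugate to a power of some $g_i$. Hence \emph{every element of $G$ is conjugate to a power of some branch cycle}. Combined with the product-one relation $\prod_i g_i=1$, the generation condition $\langle g_1,\dots,g_r\rangle=G$, and the genus-$0$ constraint coming from transcendence degree $1$, this covering property is very rigid: a finite group admitting a product-one generating tuple whose cyclic subgroups cover it up to conjugacy is either cyclic or a Frobenius group with cyclic kernel and complement. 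This already isolates the two families in (ii); it remains to pin down the admissible orders.

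The order conditions come from the arithmetic refinements. First, the real place excludes even order exceeding $2$: an inertia generator whose order is divisible by $4$ produces a cyclic quartic subquotient, and over $K\subset\rr$ the obstruction to embedding a quadratic field into a cyclic quartic one forces the corresponding quadratic subcover to be totally positive, hence to possess no real branch point; by the Specialization Inertia Theorem this leaves a positive-density set of primes at which the required ramification can never be produced, contradicting (HG). Second, the prime-power condition is read off from the Branch Cycle Lemma together with the distinctness of the $K(\zeta_p)$: the field of definition of a branch point carrying inertia of order $m$ is governed by the tower $K(\zeta_{p^i})$, and for a composite $m=m_1m_2$ the two cyclotomic directions become independent, so that no configuration of branch cycles can simultaneously reproduce all the totally ramified and mixed tame local conditions attached to the distinct prime divisors of $m$ while respecting product-one and $K$-regularity. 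Making this last point sharp --- that is, ruling out \emph{every} group outside the list rather than merely the obvious families, and in particular excluding composite cyclic kernels and complements --- is the step I expect to be the main obstacle, since it requires controlling the product-one relation, the fields of definition of all branch points, and the exact constant field of the cover at once, presumably through a reciprocity- or embedding-type obstruction.

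For sufficiency I would exhibit a suitable cover for each admissible $G$ and then invoke weak approximation and Hilbert irreducibility. For $G$ cyclic of order $2$ one uses a quadratic cover with a $K$-rational branch point; for $G=C_{p^a}$ with $p$ odd one descends a Kummer extension to obtain, for each $i\le a$, a branch-point orbit of inertia order $p^i$ defined over $K(\zeta_{p^i})$, the product-one relation holding orbit-by-orbit since $\sum_{u\in(\zz/p^i\zz)^\times}u\equiv 0\pmod{p^i}$. For a Frobenius group $N\rtimes H$ one builds a rigid three-point cover with one branch cycle in $N$ and the others generating complement factors, arranged so that the non-abelian conjugation gives $g^{q_\fp}\sim g$ exactly when $q_\fp$ lies in the image of $H$ in $\Aut(N)$; specializing near the kernel branch point then realizes $N\rtimes\langle\phi\rangle$, and in particular all of $G$, as a local group, while additional $N$- and $H$-inertia branch points supply the remaining totally ramified conditions. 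In every case one concludes by placing $t_0$ $\fp$-adically close to the appropriate branch point at each $\fp\in\mathcal{S}$ (weak approximation on the $t$-line), ensuring full Galois group $G$ by Hilbert irreducibility outside a thin set, and handling the unramified conditions as in \cite{DG}; the distinctness hypothesis re-enters to guarantee that primes with the needed Frobenius occur in the required density.
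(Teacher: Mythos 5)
Your necessity argument rests on a structural claim that is false, and this is the central gap. You assert that a finite group admitting a product-one generating tuple whose branch cycles cover the group up to conjugacy and powers, ``combined with the genus-$0$ constraint coming from transcendence degree $1$,'' must be cyclic or Frobenius with cyclic kernel and complement. There is no such genus constraint: only the base $\mathbb{P}^1_t$ has genus $0$, while $E$ may have arbitrary genus, so the number of branch points and the orders of the $g_i$ are unconstrained; and many groups far outside the list (e.g.\ $S_4$, or any group realized regularly with enough branch points) admit product-one generating tuples whose cyclic subgroups cover the group up to conjugacy. The covering property you extract from Proposition~\ref{beckmann} is necessary but nowhere near sufficient. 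The paper's actual route is different: it shows that specializations at suitable positive-density sets of primes can never realize certain \emph{inertia--decomposition pairs} $(I,D)$, using Proposition~\ref{mainlemma_kln} (which tracks residue fields and decomposition groups at branch points, not just inertia). Concretely: (1) decomposition groups at primes split in a fixed number field are cyclic, which excludes non-cyclic abelian subgroups and forces all Sylow subgroups to be cyclic or generalized quaternion; (2) over $K\subset\mathbb{R}$, inertia of order $2$ inside a cyclic $C_4$ in the decomposition group is excluded via the sum-of-two-squares criterion for $C_4$-embeddability, killing elements of order $4$ and hence generalized quaternion Sylows; (3) the distinctness of the $K(\zeta_p)$ excludes ramification index divisible by $r$ together with residue degree divisible by $q$ (the contradiction being that the cyclotomic action would force $D/I$ to be non-abelian), killing elements of composite order. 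Only then does the classification of Sylow-cyclic groups (Zassenhaus) yield $G\cong C_P\rtimes C_Q$. You gesture at obstructions (2) and (3) in your ``order conditions'' paragraph, but you apply them only after the false rigidity claim has already (incorrectly) reduced to cyclic and Frobenius groups, and you yourself flag that the composite-order exclusion is incomplete.

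The sufficiency direction also has a substantive gap. Realizing \emph{all} tame $C_d$-extensions of $K_\fp$ with ramification index $e$ requires controlling the \emph{residue extension} at the branch point, not just the inertia order: one needs a branch point whose completion is a $C_d$-extension of $K(\zeta_{e'})((y))$ with ramification index $e'$ and residue extension exactly $K(\zeta_d)/K(\zeta_{e'})$, for every divisor pair with $\mu_d\cap K_\fp=\mu_{e'}$, and then a result like Proposition~\ref{thm:KLN2} to pass from one realized local extension to all of them. Your Kummer/rigidity constructions prescribe inertia orders and fields of definition of branch points but say nothing about the residue fields at those branch points, which is where the work lies. The paper sidesteps explicit covers entirely: it builds the required local extensions by hand (Lemma~\ref{lem:construct1}) and invokes Saltman's theorem on generic extensions to produce a single $G$-extension $E/K(t)$ having all of these as completions at prescribed places, then applies Propositions~\ref{mainlemma_kln} and~\ref{thm:KLN2} plus Lemma~\ref{lem:pv}.
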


\begin{remark}
Recall that a Frobenius group $G$ is a transitive permutation group whose non-\JK{identity} elements all have either no or one fixed point, with the latter case occurring at least once. It is well known that the set of fixed point free elements of a Frobenius group together with the identity forms a normal subgroup $K$, called the Frobenius kernel, and that $G=K\rtimes H$, where $H$ is a point stabilizer, also known as Frobenius complement. In particular, it is not difficult to see that a semidirect product $G=K\rtimes H$ is Frobenius with complement $H$ if and only if $C_G(k) \cap H = \{1\}$ for every $k\in K\setminus\{1\}$. With that in mind, a Frobenius group with cyclic kernel and complement of odd prime power order (or order $2$) is the same as a semidirect product $C_P\rtimes C_Q$ of two cyclic groups of prime power order, where $C_Q$ acts as a group of automorphisms not only on $C_P$ but also on its prime-order subgroup (call that prime $p$). Since $\Aut(C_P)$ and $\Aut(C_p)$ are cyclic, the latter of order $p-1$, which is also the prime-to-$p$ part of the further, it follows that our condition is equivalent to $G\cong C_P\rtimes C_Q$, where $P$ and $Q$ are of {\it coprime} prime power order (or order $2$) and $C_Q$ acts as a group of automorphisms on $C_P$. This is how Condition b) above is worded and used in Section \ref{sec:proofmain}.
\end{remark}

\JK{Over more general number fields, determining the precise list of groups $G$ with the Hilbert--Grunwald property can vary in somewhat delicate ways. Just to give one example, over a field $K$ containing the $n$-th roots of unity, the extension $K(t^{1/n})/K$ is Galois with group $C_n$ and is a {\textit{generic}} extension for this group, meaning that {\it all} $C_n$-extensions, and in particular all possible local behaviors, occur among its specializations. In particular, it has the Hilbert--Grunwald property over $K$, whereas Theorem \ref{thm:hgmain} shows that, for most $n$, there is no such extension over fields such as $\mathbb{Q}$. 
 We will see in the proof of Theorem \ref{thm:hgmain} that the implication ii)$\Rightarrow$i) always holds, see Theorem \ref{thm:cyclic_odd}, whereas conversely a group with a non-cyclic abelian subgroup can never have (a transcendence degree-$1$ extension with) the Hilbert--Grunwald property, see Corollary \ref{cyclic_dec_group}.
 Below, we give one more class of fields over which the groups with the Hilbert--Grunwald property are  (almost-)completely determined:
\begin{theorem}
\label{thm:qi} 
Let $K$ a number field containing 
$\sqrt{-1}$ such that the cyclotomic extensions $K(\zeta_p)$, with $p$ running through the prime numbers, are pairwise distinct.
Suppose $G$ is a finite group for which there exists a $K$-regular $G$-extension $E/K(t)$ with the Hilbert--Grunwald property. Then $G$ is either a cyclic group of prime power order, a Frobenius group whose kernel and complement are both cyclic of prime power order, or a generalized quaternion group.
Moreover, the converse holds as long as $G$ possesses a generic extension over $K$.\footnote{The existence of generic extensions over number fields containing $\sqrt{-1}$ is known for all of the above groups, except possibly the generalized quaternion groups of order $2^k$ ($k\ge 5$); cf.\ \cite[Theorem 2.1]{S82} for the cyclic groups, and \cite[Theorem 1.4]{Kang} for the generalized quaternion groups of order $8$ and $16$.}
\end{theorem}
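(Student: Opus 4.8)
The plan is to establish the two implications separately, treating Theorem \ref{thm:qi} as the $\sqrt{-1}\in K$ analogue of Theorem \ref{thm:hgmain}, with the role played over a real field $K\subset\mathbb{R}$ by the real archimedean place now taken over by the places of $K$ above $2$ (at which $\sqrt{-1}$ is a square). The forward implication is the delicate one and proceeds by extracting group-theoretic consequences from the existence of a transcendence-degree-one HG extension; the converse is obtained by combining the constructions already available from Theorem \ref{thm:cyclic_odd} with the input of a generic extension.

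For the forward direction, suppose $E/K(t)$ is a $K$-regular $G$-extension with the Hilbert--Grunwald property. First I would invoke Corollary \ref{cyclic_dec_group}: since $G$ admits a transcendence-degree-one HG extension, $G$ has no non-cyclic abelian subgroup, so every Sylow subgroup of $G$ is cyclic, except that the Sylow $2$-subgroup is permitted to be generalized quaternion; in particular $G$ is one of the periodic (Suzuki--Zassenhaus) groups. This already forces the odd part of $G$ to be metacyclic and constrains the $2$-part, and the remaining task is to eliminate every such periodic group other than the three listed families. Here I would use the Specialization Inertia Theorem (Proposition \ref{beckmann}) together with tame local analysis: at a tame prime $\mathfrak{p}$ the decomposition group of a specialization, like the Galois group of any admissible condition $L^{(\mathfrak{p})}/K_\mathfrak{p}$, is metacyclic with \emph{cyclic} inertia on which Frobenius acts by raising to the $N\mathfrak{p}$-th power. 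The hypothesis that the fields $K(\zeta_p)$ are pairwise distinct lets me choose, for each relevant prime, tame primes $\mathfrak{p}$ realizing independent Frobenius classes, and thereby prescribe local conditions at several primes simultaneously. Matching these against the single branch-cycle datum of $E/K(t)$ shows that $G$ cannot split off a nontrivial direct factor---this is where a group such as $Q_8\times C_3$, though periodic, is eliminated, since realizing all its local conditions at once would force the local dimension above one---and that the action of any complement on the kernel must be fixed-point-free. This leaves exactly a cyclic group of prime power order, a Frobenius group $C_P\rtimes C_Q$ with $P,Q$ of coprime prime power order, or a generalized quaternion group; the last case survives precisely because, with $\sqrt{-1}\in K_\mathfrak{p}$ above $2$, the embedding obstruction that excluded it over real fields now vanishes.

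For the converse, assume $G$ lies in one of the three families and possesses a generic extension over $K$. When $G$ is cyclic of order $2$ or odd prime power, or Frobenius with such kernel and complement, the required transcendence-degree-one HG extension is produced directly by Theorem \ref{thm:cyclic_odd}. The genuinely new cases---cyclic groups of $2$-power order at least $4$, Frobenius groups with a $2$-power piece of order at least $4$, and the generalized quaternion groups---are handled using the generic extension: a generic $G$-extension realizes every $G$-extension of $K$ and of each completion $K_\mathfrak{p}$ among its specializations, so all admissible local conditions occur, and hence every Grunwald problem (outside a controlled finite set of primes) is solvable inside the family. It then remains to descend from the several-parameter generic extension to a single parameter: I would specialize all but one of the parameters generically, using Hilbert irreducibility to preserve both $K$-regularity and the property that every required local condition still occurs among the specializations of the resulting one-parameter extension.

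I expect the main obstacle to be the final step of the forward direction: passing from ``all abelian subgroups cyclic'' (periodicity) to the exact three-family classification. Periodicity alone admits many more groups---non-solvable ones such as $\SL_2(\mathbb{F}_p)$, and solvable ones such as nontrivial direct products or non-Frobenius metacyclic groups---so the argument must exploit the full strength of the transcendence-degree-one HG hypothesis to eliminate them, via the simultaneous-local-realization argument above. Correctly isolating the generalized quaternion groups as the \emph{only} admissible non-cyclic $2$-groups, using $\sqrt{-1}\in K$ to see that their local conditions above $2$ become realizable, is the subtle point that separates Theorem \ref{thm:qi} from the real-field Theorem \ref{thm:hgmain}.
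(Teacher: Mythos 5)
Your proposal has the right opening move for the forward direction (Corollary \ref{cyclic_dec_group} forces every Sylow subgroup to be cyclic or generalized quaternion), but the step that takes you from this ``periodic'' class down to the three listed families is left vague, and the mechanism you gesture at (matching local conditions ``against the single branch-cycle datum'' to rule out direct factors and non-fixed-point-free actions) is not the one that works. The paper's second obstruction is Lemma \ref{cyclic_dec_group2}: if $G$ contains an element of order $qr$ with $q\ne r$ prime, then for a positive-density set of primes $\fp$ (split in $K(\zeta_r)$, non-split in $K(\zeta_q)$) the cyclic local extension with ramification index divisible by $r$ and residue degree divisible by $q$ exists over $K_\fp$ but can never arise as a specialization, because the Frobenius would have to act nontrivially on the tame inertia via $\mu_q$, forcing a non-abelian quotient $D/I$. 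Once you know every element of $G$ has prime power order, pure group theory finishes: in the Sylow-cyclic case Zassenhaus gives $G=B.A$ metacyclic with coprime cyclic factors, which must then be of prime power order with faithful action (i.e.\ Frobenius); in the generalized quaternion case the unique central involution, multiplied by any odd-order element, would produce an element of non-prime-power order, so $G$ must be a $2$-group. Your criteria (``no direct factor'', ``fixed-point-free complement'') do not obviously dispose of, say, $\SL_2(\ff_5)$, whereas the order-$qr$ criterion does so immediately.

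The converse direction contains a step that is actually false. You propose to take the multi-parameter generic extension and ``specialize all but one of the parameters generically, using Hilbert irreducibility to preserve \dots the property that every required local condition still occurs among the specializations.'' Hilbert irreducibility preserves the Galois group but emphatically not the local surjectivity: a one-parameter extension has finitely many branch points, and by the Specialization Inertia Theorem (Proposition \ref{beckmann}) almost all primes $\fp$ for which no branch point is $K_\fp$-rational are unramified in every specialization. If your descent worked, $C_3\times C_3$ (which has a generic extension over $\qq$) would have a one-parameter HG extension, contradicting Corollary \ref{cyclic_dec_group}. What the paper actually does is invoke Saltman's theorem \cite[Theorem 5.9]{S82} to produce a one-parameter $G$-extension with \emph{prescribed completions} at finitely many chosen places, where the completions are the carefully built local extensions $E^{(e,d)}/F^{(e,d)}$ of Lemma \ref{lem:construct1} together with the $(C_e\rtimes C_f)$-completions of \cite[Corollary 3.6]{KN}; Propositions \ref{mainlemma_kln} and \ref{thm:KLN2} then show these completions suffice to hit all tame local conditions at almost all $\fp$. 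You also omit the observation that makes the quaternion case tractable: since $\sqrt{-1}\in K_\fp$ for odd $\fp$, generalized quaternion groups are not Galois groups over any $K_\fp$, so no quaternion local conditions need to be realized at all and only the cyclic and metacyclic conditions (*), (**), (***) remain.
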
}

Question \ref{ques:hg2} and Theorems \ref{thm:hgmain} and \ref{thm:qi} should be viewed in the broader context of a series of papers investigating (in various senses) the ``arithmetic-geometric complexity" of the Galois theory of a given finite group over a given field $K$. Specifically, the following question is raised in \cite{Koe_GW} (and formalized in \cite{KN}): What is the minimal 
integer $d$ such that there exist finitely many $G$-extensions $E_i/F_i$ ($i=1,\dots, r$) of \'etale algebras of transcendence degree $\le d$ over $K$ (equivalently,  Galois extensions of transcendence degree-$d$ fields $E_i/F_i$ with Galois group embedding into $G$) whose specializations provide solutions to all Grunwald problems for $G$, possibly outside some finite set of primes? This dimension is called the {\it Hilbert--Grunwald dimension} $\textrm{hgd}_K(G)$ of $G$ over $K$. In these terms, Question \ref{ques:hg2} asks to classify all groups of Hilbert--Grunwald dimension $1$. 
In particular, any result of the form ``There is no \JK{transcendence degree $1$ extension $E/F$} possessing the Hilbert--Grunwald property" is first and foremost a statement about the complexity of the Galois theory of the group $G$: in this case, the local Galois theory of $G$ (i.e., over completions of the field $K$) is too complex to be captured by a one-dimensional object (i.e., covers of curves). 

The quantity $\textrm{hgd}_K(G)$ is intimately connected with the {\it local dimension} $\textrm{ld}_K(G)$ of $G$ over $K$ (see again \cite{KN}), defined as the minimal 
integer $d$ such that there exist finitely many $G$-extensions $E_i/F_i$ ($i=1,\dots,r$) of transcendence degree $\le d$ over $K$ such that, for all but finitely many completions $K_{\DN{\fp}}$ of $K$, all Galois extensions $L/K_\fp$ of group embedding into $G$ occur as a specialization of some $E_i/F_i$ (after base change from $K$
 to $K_\fp$).
  In this context, \cite[Theorem 1]{KN} showed that $\textrm{ld}_K(G)\le 2$ for all groups $G$ and number fields $K$, while furthermore $\textrm{ld}_K(G)\le \textrm{hgd}_K(G)$ holds trivially with equality possible in general 
  \footnote{If equality holds in general, then the inverse Galois problem has an affirmative answer.}. 
  \DN{In fact, in Theorem \ref{thm:hgmain_strong}}, we achieve a full classification of groups $G$ with $\textrm{ld}_K(G)=\textrm{hgd}_K(G)=1$ for fields such as $K=\mathbb{Q}$.

One of our main tools is a result on the local behavior of Galois extensions arising via specialization of a given function field extension, shown in \cite{KLN} and recalled in Proposition \ref{mainlemma_kln}. It was noted already in \cite[Theorem 6.2]{KLN} that this result implies a negative answer to Question \ref{ques:hg2} (over any number field $K$) for all groups $G$ possessing a non-cyclic abelian subgroup. In Section \ref{sec:hgp}, we develop this approach further and exhibit some newly discovered obstructions to the Hilbert--Grunwald property (Lemmas \ref{cyclic_dec_group2} and \ref{cyclic_dec_group3}). These, together with the well-known classification of Sylow-cyclic finite groups,  suffice to prove the implication i)$\Rightarrow$ii) in Theorem \ref{thm:hgmain}.


To show that these are the only obstructions, that is, to obtain the implication ii)$\Rightarrow$i), we construct function field extensions with prescribed local behavior using the existence of generic extensnions \cite{S82} and  again  the specialization criteria obtained in \cite{KLN}; cf.\ Section \ref{sec:proofmain}.

On the other extreme, when $K$ contains the $p$-th roots of unity for all $p\divides |G|$, we show the obstructions obtained in Section \ref{sec:hgp} are  also the only obstructions for the Hilbert--Grunwald property for abelian groups, see Theorem \ref{thm:main-cyclic}. Our construction makes use of embedding problems for quadratic extensions into cyclic extensions \cite[\S 16.5]{FJ}, following Geyer--Jansen \cite{GJ} and Arason--Fein--Schacher--Sonn \cite{AFSS}, \DN{and even uses an idea from the construction of polynomials having a root mod $\fp$ for all $\fp$.} 


This paper is dedicated to Moshe Jarden who greatly influenced our research for many years to come. \DN{The authors thank Howard Neur for helpful discussions.}
The second author is grateful for support of the Israel Science Foundation, grant no.\ 353/21, and the support and hospitality of the Institute for Advanced Studies.

%
%

\section{Preliminaries}
\subsection{Completions of number fields}\label{sec:local}
Let $K$ be a number field. For a prime $\fp$ of (the ring of integers $\mathcal O_K$ of) $K$, let $N(\fp):=|O_K/\fp|$ be its norm, and $K_\fp$ the completion of $K$ at $\fp$.
Let $\mu_e$ denote the $e$-th roots of unity in an algebraic closure of $K$, and $\zeta_e\in \mu_e$ a primitive $e$-th root of unity. 

\DN{Recall that the Galois group $G^{tr}_\fp$ of the maximal tamely ramified extension $M_\fp$ of $K_\fp$ is (profinitely) generated by two generators $\sigma$ and $\tau$ subject to the single relation $\sigma^{-1}\tau\sigma = \tau^q$, where $q=N(\fp)$. Its inertia subgroup is the cyclic group generated by $\tau$, so that the conjugation action of $\sigma$ on $\langle \tau\rangle/\langle \tau^e\rangle$ is equivalent to its action on $\mu_e$ for all $e$ coprime to $\fp$. In particular, the  abelianization of $G^{tr}_\fp$ is isomorphic to $\mu_{q-1}\times \hat{\mathbb  Z}$, with the inertia group mapping to $\mu_{q-1}$, and furthermore $\mu_{q-1}$ is the group of roots of unity in $K_\fp$ of order coprime to $\fp$.}  

We  note that $K_\fp$ admits a degree $d$ cyclic extension with ramification index $e$ if and only if $\zeta_e\subseteq  K_\fp$. 
Indeed,   
in view of the above isomorphism, the cyclic group $C_d$ of order $d$ is the Galois group of an extension with ramification index $e$, for $e$ coprime to $q$, if and only if there exists an epimorphism $\mu_{q-1}\times \hat{\mathbb Z}\ra C_d$ which maps $\mu_{q-1}$ to $C_e$. Such an epimorphism exists if and only if $e$ divides $q-1$, that is, if and only if $\zeta_e\subseteq K_\fp^\times$. 



\subsection{Basics about function field extensions}
%
\JK{Let $R$ be a Dedekind domain with field of fractions $F$, let $E/F$ be a finite Galois extension and $S$ the integral closure of $R$ in $F$.} 
For any prime ideal $\fp$ of $R$ and any prime ideal $\fP$ of $S$ extending $\fp$,   the {\it specialization} of $E/F$ at $\nu$ is the extension $S_{\fP}/R_{\fp}$ of residue fields at the respective prime ideals. Note that  this specialization is independent of the choice of prime ideal extending $\fp$ since $E/F$ is Galois. In particular, in the key case of a rational function field $F=K(t)$ over a field $K$, we denote by $E_{t_0}/K$, for $t_0\in \mathbb{P}^1(K)$, the specialization of $E/K(t)$ at the $K$-rational place $t\mapsto t_0$, \JK{i.e., at the ideal $(t-t_0)$ of $K[t-t_0]$. \footnote{If $t_0=\infty$, one should read $t-t_0$ as $1/t$.}}
\JK{We also denote the set of all specializations $E_{t_0}/K$, for $t_0\in \mathbb{P}^1(K)$, by $Sp(E/K(t))$.}

\JK{If $K\subset F$ is a field which is algebraically closed in $E$, we say that the extension $E/F$ is $K$-regular.}

Now let $K$ be of characteristic zero, and let $E/K(t)$ be a \JK{(not necessarily $K$-regular)} Galois extension with group $G$. Such an extension has finitely many branch points $p_1,...,p_r\in \mathbb{P}^1(\overline{K})$, and associated to each branch point $p_i$ is a unique conjugacy class $C_i$ of $G$, corresponding to the automorphism $(t-p_i)^{1/e_i}\mapsto \zeta (t-p_i)^{1/e_i}$ of the Laurent series field
$\overline{K}(((t-p_i)^{1/e_i}))$, where $e_i$ is minimal such that $E$ embeds into $\overline{K}(((t-p_i)^{1/e_i}))$, and $\zeta$ is a primitive $e_i$-th root of unity. This $e_i$ is the {\it ramification index} at $p_i$, and equals the order of elements in the class $C_i$, \JK{which in term are the generators of inertia subgroups at places extending $t\mapsto p_i$ in $E$}. 


\subsection{Local behavior of specializations}

Let $K$ be a number field and $E/K(t)$  a \JK{(not necessarily $K$-regular)} Galois extension with group $G$, and $t_0\in \mathbb{P}^1_K$.
We will make extensive use of previous results relating inertia groups, residue fields, etc., at primes $\fp$ in the specialized extension $E_{t_0}/K$ to those in the extension $E/K(t)$. 

In the following proposition we state the well-known criterion of Beckmann \cite[Prop.\ 4.2]{Beckmann} relating ramification in Galois extensions of $K(t)$ to ramification in their specializations. We first introduce some notation:
For $a_0\in \mathbb{P}^1(\overline{K})$, let $f\in K[X]$, resp., $\tilde{f}\in K[X]$ be the minimal polynomial of $a_0$, resp., of $1/a_0$.\footnote{We define the minimal polynomial of infinity as $1$.} 
Define the {\it intersection multiplicity} $I_\mathfrak{p}(a,a_0)$ as the $\mathfrak{p}$-adic valuation $\nu_\mathfrak{p}(f(a))$ in case $a_0$ is of non-negative $\mathfrak{p}$-adic valuation, resp., as $\nu_\mathfrak{p}(\tilde{f}(1/a))$ otherwise. Obviously, we have $I_\mathfrak{p}(a,a_0)\ne 0$ only for finitely many prime ideals $\mathfrak{p}$ of $K$. 
%
\begin{prop}
\label{beckmann}
Let $K$ be a number field and $E/K(t)$ be a 
Galois extension with Galois group $G$.
Suppose $a\in K$ is not a branch point of $E/K(t)$. 
For all but finitely many primes $\mathfrak{p}$ of $K$, with the exceptional set depending on $E/K(t)$, 
 the following condition is necessary for $\mathfrak{p}$ to be ramified in the specialization $E_{t_0}/K$:
 $$e_i:=I_{\mathfrak{p}}(t_0,t_i)>0 \text{ for some (unique up to conjugation) branch point $t_i$.}$$
Furthermore, the inertia group of a prime extending $\mathfrak{p}$ in the specialization $E_{t_0}/K$ is then conjugate in $G$ to $\langle\tau^{t_i}\rangle$, where $\tau$ is a generator of an inertia subgroup over the branch point $t\mapsto t_i$ of $E/K(t)$.
\end{prop}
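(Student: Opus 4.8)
The plan is to reduce the assertion to a local statement over the completion $K_\mathfrak{p}$ and then read off ramification and inertia from the Kummer (Laurent series) description of $E/K(t)$ near each branch point, as recalled above. Since $E/K(t)$ is Galois, the specialization $E_{t_0}/K$ is well defined up to isomorphism, and both whether $\mathfrak{p}$ ramifies and the inertia subgroup at a prime above $\mathfrak{p}$ are detected by $E_{t_0}\otimes_K K_\mathfrak{p}$. First I would fix the finite exceptional set: primes dividing $|G|$, primes at which two branch points from distinct $K$-conjugacy classes become congruent, primes dividing the discriminant and resultants of a chosen integral model of the cover, and primes whose residue characteristic divides one of the ramification indices. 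Away from this set the cover $X\to\mathbb{P}^1$ attached to $E/K(t)$ has good reduction: it extends to a tame $G$-cover over $\mathcal{O}_{K,\mathfrak{p}}$ whose branch divisor is horizontal with distinct fibers, and whose inertia data is preserved by reduction. Establishing the existence of such a model, with the exceptional set genuinely depending on $E/K(t)$, is the standard but essential input of this step.

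Next I would settle the ramification criterion. For $\mathfrak{p}$ outside the exceptional set, the specialization at $t_0$ is the pullback of the good model along the section $\mathrm{Spec}\,\mathcal{O}_{K,\mathfrak{p}}\to\mathbb{P}^1$ determined by $t_0$. If the reduction $\overline{t_0}$ avoids every reduced branch point, this section lands in the étale locus, so the pullback is unramified at $\mathfrak{p}$; hence ramification forces $\overline{t_0}\equiv\overline{t_i}$ for some branch point $t_i$, which is exactly $I_\mathfrak{p}(t_0,t_i)=\nu_\mathfrak{p}(f(t_0))>0$ for $f$ the minimal polynomial of $t_i$ (the case $t_i=\infty$ or of negative valuation being handled symmetrically through $1/t$ and $\tilde{f}$). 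Because branch points from distinct $K$-conjugacy classes have distinct reductions, this $t_i$ is unique up to conjugacy.

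Finally I would compute the inertia group. After enlarging $K_\mathfrak{p}$ to the completion $K(t_i)_\mathfrak{P}$ so as to make $t_i$ rational, the completion of $E$ at a place above $t\mapsto t_i$ embeds into $\overline{K}(((t-t_i)^{1/e}))$ with $e$ the ramification index and inertia generated by $\tau\colon(t-t_i)^{1/e}\mapsto\zeta_e\,(t-t_i)^{1/e}$. Substituting $t=t_0$ replaces the uniformizer $t-t_i$ by $t_0-t_i$, of $\mathfrak{p}$-valuation $m:=I_\mathfrak{p}(t_0,t_i)$, so the specialized local extension is generated by an $e$-th root of an element of valuation $m$, hence is tamely and totally ramified of degree $e/\gcd(e,m)$. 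Identifying the totally ramified inertia of that order inside the cyclic group $\langle\tau\rangle$ of order $e$ gives inertia $\langle\tau^{\gcd(e,m)}\rangle=\langle\tau^{m}\rangle=\langle\tau^{e_i}\rangle$ in the notation of the statement, well defined up to $G$-conjugacy through the choices of place above $t_i$ and of prime above $\mathfrak{p}$, and descending from $K(t_i)$ back to $K$. The step I expect to be the main obstacle is precisely this last one: matching the \emph{geometric} inertia generator $\tau$, with its canonical root of unity $\zeta_e$, to the \emph{arithmetic} tame inertia at $\mathfrak{p}$ (which is where coprimality of $e$ to the residue characteristic and the compatibility of the two cyclotomic actions enter), together with the descent bookkeeping needed when $t_i$ is not $K$-rational.
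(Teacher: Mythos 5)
The paper does not actually prove Proposition \ref{beckmann}: it is quoted as Beckmann's criterion with a citation to \cite{Beckmann} (plus Remark \ref{rem:beckmann_nonreg} explaining how to drop the $K$-regularity hypothesis), so there is no in-paper proof to compare against. Your sketch is, in outline, the standard proof of the cited result: spread the cover out to a tame $G$-cover over $\mathcal{O}_{K,\mathfrak{p}}$ with horizontal, fiberwise-disjoint branch divisor for all $\mathfrak{p}$ outside a set depending only on $E/K(t)$; deduce that ramification of $\mathfrak{p}$ in $E_{t_0}/K$ forces the section $t_0$ to meet a (then unique up to conjugacy) branch point $t_i$; and read off the inertia subgroup $\langle\tau^{\gcd(e,m)}\rangle=\langle\tau^{m}\rangle$ from the Kummer description $ (t-t_i)^{1/e}$ after substituting an element of valuation $m=I_\mathfrak{p}(t_0,t_i)$. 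Two points deserve slightly more care than you give them: the exceptional set should also exclude primes modulo which two \emph{conjugate} branch points collide (not only branch points from distinct $K$-conjugacy classes), since otherwise the local picture near $t_i$ is not a single Kummer branch and the uniqueness of the relevant conjugate of $t_i$ over a given $\mathfrak{P}\mid\mathfrak{p}$ fails; and the step ``substituting $t=t_0$'' in the Laurent-series model is exactly what the good-reduction model is needed to justify, which you correctly flag as the essential input rather than prove. With those caveats the proposal is a faithful reconstruction of the argument the paper outsources to \cite{Beckmann}.
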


\JK{\begin{remark}
\label{rem:beckmann_nonreg}
Note that Proposition \ref{beckmann} is often stated with the additional assumption of $E/K(t)$ being $K$-regular, which is however not necessary after possibly increasing the finite set of exceptional primes. Indeed, if $\kappa$ is the algebraic closure of $K$ inside $E$, one may apply the result for the $\kappa$-regular extension $E/\kappa(t)$ and then deduce it for $E/K(t)$, via noting the following elementary facts: from the definition, inertia groups at a given branch point $t\mapsto t_i$ are the same in $E/K(t)$ and in $E/\kappa(t)$, up to conjugacy in $G$; furthermore, as long as $\mathfrak{p}$ is unramified in $\kappa/K$, intersection multiplicities at $\mathfrak{p}$ and at some (hence any, since $\kappa/K$ is Galois) prime extending $\mathfrak{p}$ in $\kappa$ are identical; finally, inertia groups at $\mathfrak{p}$ in $E_{t_0}/K$ and $E_{t_0}/\kappa$ are also identical (up to conjugacy) as long as $\mathfrak{p}$ is unramified in $\kappa/K$.
\end{remark}}

Next, we deal with residue fields at ramified primes in specializations. For a \JK{Galois} extension $E/K(t)$, a value $t_0\in \mathbb{P}^1_K$ and a prime $\fp$ of $K$, we use the notation $I_{t_0,\fp}$ and $D_{t_0,\fp}$ for the inertia and decomposition group at (a prime extending) $\fp$ in the residue field extension $E_{t_0}/K$.

The following result is used as the main tool in \cite{KLN} (and occurs there in a somewhat more general setting as Theorem 4.1). It relates the residue field, decomposition group etc. at ramified primes in specializations to the respective data at a branch point in the \JK{underlying function field} extension. \JK{Just like Proposition \ref{beckmann}, it is stated only for $K$-regular Galois extensions in \cite{KLN}; however, its proof makes no use of this regularity assumption other than invoking Proposition \ref{beckmann}; thus, the assumption may be dropped by Remark \ref{rem:beckmann_nonreg}.}
\begin{prop}
\label{mainlemma_kln}
Let $K$ be a number field and $E/K(t)$ a 
 Galois extension with group $G$.
Let 
$t_i\in \overline{K}\cup\{\infty\}$ be a branch point of $E/K(t)$, and set $E':=E(t_i)$ and $K':=K(t_i)$.
Let $\fp$ be a prime of $K$, away from an explicit finite set of ``exceptional" primes depending only on $E/K(t)$, and assume that there exists a prime ${\fp}'$ extending $\fp$ in $K'$ with relative degree $1$.\footnote{Note that, due to the definition of intersection multiplicity, the existence of such a $\fp'$ is a necessary condition for $I_\fp(t_0,t_i)>0$ for any $t_0\in K$.}

Let $t_0\in K$ be a non-branch point such that $I_\fp(t_0,t_i)>0$.
Denote by $I$ and $D$ the inertia and decomposition group at (a fixed place extending) $t\mapsto t_i$ in $E'/K'(t)$.
Then the following hold:
\begin{itemize} 
\item[i)]
 The completion at $\fp'$ of $E'_{t_i}/K'$ is contained in the completion at $\fp$ of $E_{t_0}/K$.\\
In particular, the residue extension at $\fp'$ in $E'_{t_i}/K'$ is contained in the residue extension at $\fp$ in $E_{t_0}/K$.
\item[ii)] The \DN{identification} of the decomposition group $D_{t_0,\fp}$  with a subgroup of $D$ (up to conjugacy in $G$) fulfills $\varphi(D_{t_0,\fp})= D_{t_i,\fp'}$, where $\varphi: D\to D/I$ is the canonical epimorphism. 
\end{itemize} 
In particular, if in addition $I_\fp(t_0,t_i)$ is coprime to $e_i:=|I|$, then the following hold:
\begin{itemize}
\item[iii)] The decomposition group $D_{t_0,\fp}$ equals $\varphi^{-1}(D_{t_i,\fp'})$ where $\varphi: D\to D/I$ is the canonical epimorphism. Furthermore, the residue extension at $\fp$ in $E_{t_0}/K$ equals the residue extension at ${\fp}'$ in $E'_{t_i}/K'$.
\end{itemize}
\end{prop}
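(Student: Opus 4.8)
The plan is to pass to the completion $K_\fp$ and analyze the cover in a neighbourhood of the branch point $t_i$, separating the \emph{geometric} ramification at $t\mapsto t_i$ (governed by $I$) from the \emph{residue cover} $E'_{t_i}/K'$ (governed by $D/I$). First I would use the hypothesis that $\fp'$ has relative degree $1$ to identify $K'_{\fp'}=K_\fp$; this realizes $t_i$ as a point of $K_\fp$ and, after excluding the finitely many primes at which $\fp$ ramifies in $K'$ or the reduction of the cover is bad, lets me rewrite $I_\fp(t_0,t_i)=m$ as $\nu_\fp(t_0-t_i)=m>0$ (reading $t-t_i$ as $1/t$ if $t_i=\infty$). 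Thus $t_0$ lies in the residue disk of the branch point, and the whole question becomes local at $\fp$.

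Next I would record the completed local structure of $E'/K'(t)$ at the place over $t\mapsto t_i$. Writing $s=t-t_i$, the geometric inertia $I=\langle\tau\rangle$ is tame cyclic of order $e_i=|I|$ (tameness being guaranteed once $\fp\nmid e_i$, an excluded case), corresponding to the Kummer-type ramification $s^{1/e_i}\mapsto \zeta_{e_i}s^{1/e_i}$; the fixed field $M:=(E')^I$ is unramified in $s$ at $s=0$, and its residue cover is exactly $E'_{t_i}/K'$, which carries the group $D/I$. Localizing the number-field extension $E'_{t_i}/K'$ at $\fp'$ produces a local extension $L_0:=\widehat{E'_{t_i}}_{\fp'}/K_\fp$ with group $D_{t_i,\fp'}\subseteq D/I$. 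This is the ``input'' datum against which the specialization is to be compared.

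To prove (i) I would substitute $s\mapsto s_0:=t_0-t_i$ (with $\nu_\fp(s_0)=m>0$) into the unramified-in-$s$ part $M$: since $s_0$ lies in the residue disk of $0$, an approximation argument (Hensel's lemma together with Krasner's lemma, applied to the minimal polynomials defining $M$, all of which reduce well away from the exceptional primes) shows that the $\fp$-completion of the specialization of $M$ at $t_0$ reproduces $L_0$. Hence $L_0\subseteq \widehat{E_{t_0}}_\fp$, which is exactly (i). For (ii) I would first invoke Proposition \ref{beckmann} to place $I_{t_0,\fp}$ inside $I$ (so that $I_{t_0,\fp}=D_{t_0,\fp}\cap I$); the same local analysis near $t_i$, identifying the $\fp$-completion of $E_{t_0}$ with a subextension of the completed cover at the branch point, shows $D_{t_0,\fp}$ is conjugate into $D$. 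Applying $\varphi\colon D\to D/I$ then identifies $\varphi(D_{t_0,\fp})\cong D_{t_0,\fp}/I_{t_0,\fp}$ with the Galois group of the maximal unramified subextension of $\widehat{E_{t_0}}_\fp/K_\fp$, viewed inside $D/I$; the containment from (i) gives $\varphi(D_{t_0,\fp})\supseteq D_{t_i,\fp'}$, while the factorization of the cover near $t_i$ as residue cover followed by the tame Kummer part forbids any further unramified contribution, giving the reverse inclusion and hence $\varphi(D_{t_0,\fp})=D_{t_i,\fp'}$.

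Finally, for (iii), when $\gcd(m,e_i)=1$ the extension defined by $s_0^{1/e_i}$ is totally tamely ramified of degree exactly $e_i=|I|$ over $K_\fp$, so Beckmann forces $I_{t_0,\fp}=I$; combined with (ii) and $\ker\varphi=I\subseteq D_{t_0,\fp}$ this yields $D_{t_0,\fp}=\varphi^{-1}(D_{t_i,\fp'})$, and since the (tame) inertia contributes only ramification, the residue extension of $\widehat{E_{t_0}}_\fp$ coincides with that of its inertia-fixed field, namely $L_0=\widehat{E'_{t_i}}_{\fp'}$. The main obstacle I expect is precisely the approximation step underlying (i) together with the ``no extra unramified part'' half of (ii): one must control, uniformly in $t_0$ over the residue disk, that substituting $s_0$ neither loses nor creates unramified field extensions. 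This is exactly where the excluded exceptional primes (good reduction of the cover, tameness, unramifiedness of $\fp$ in $K'$) and the degree-$1$ hypothesis on $\fp'$ — which makes the entire residue cover defined over $K_\fp$ — do the essential work.
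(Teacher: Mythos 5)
The paper does not actually prove this proposition: it is imported verbatim from \cite[Theorem 4.1]{KLN}, and the only argument the paper supplies is the observation (via Remark \ref{rem:beckmann_nonreg}) that the $K$-regularity hypothesis of the cited result may be dropped. Your sketch reconstructs the local mechanism behind the cited proof --- completion at the branch point, Kummer description of the tame inertia, specialization of the residue cover, with the degree-$1$ hypothesis on $\fp'$ identifying $K'_{\fp'}$ with $K_\fp$ --- and in that sense follows the intended route; you correctly isolate the model-theoretic/approximation step as the technical heart. (You do not address the non-regular case at all, but since your argument is local and only quotes Proposition \ref{beckmann}, the same remark of the paper would patch this.)

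There is, however, a genuine error in your argument for (ii). You assert parenthetically that $I_{t_0,\fp}=D_{t_0,\fp}\cap I$ and then that ``the tame Kummer part forbids any further unramified contribution''; both claims fail whenever $m:=I_\fp(t_0,t_i)$ shares a factor with $e_i$. Writing the completed cover at the branch point as generated by $z$ with $z^{e_i}=us$ ($u$ a unit), substituting $s_0$ with $\nu_\fp(s_0)=m$ produces $(us_0)^{1/e_i}$, whose unramified part over $L_0$ is in general nontrivial: already for $e_i=m=2$ one gets $\sqrt{u\pi^2}=\pi\sqrt{u}$, an \emph{unramified} quadratic extension coming entirely from the geometric inertia. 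Proposition \ref{beckmann} only gives $I_{t_0,\fp}=\langle\tau^{m}\rangle$, which is then a proper subgroup of $D_{t_0,\fp}\cap I$, so the maximal unramified subextension of the completion of $E_{t_0}$ at $\fp$ can strictly contain $L_0$, and $\varphi(D_{t_0,\fp})\cong D_{t_0,\fp}/(D_{t_0,\fp}\cap I)$ is \emph{not} $D_{t_0,\fp}/I_{t_0,\fp}$ (in the example above the former is trivial and the latter has order $2$). The conclusion of (ii) survives because the surplus unramified piece lies in the image of $I=\ker\varphi$; the correct argument identifies $\varphi(D_{t_0,\fp})$ with the Galois group over $K_\fp$ of the image inside the completed specialization of the constant-field part $(E')^{I}$, i.e.\ of $E'_{t_i}\otimes_{K'}K_\fp$, which by your part (i) is exactly $L_0$, giving $\varphi(D_{t_0,\fp})=D_{t_i,\fp'}$ directly without any reference to the maximal unramified subextension. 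Parts (i) and (iii) are fine modulo the approximation step you already flag, since under the coprimality assumption $L_0\bigl((us_0)^{1/e_i}\bigr)/L_0$ really is totally ramified and $I_{t_0,\fp}=I$.
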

%

\JK{The above result, which gives necessary conditions on the local behavior of specializations, is complemented by the following (see \cite[Theorem 4.4]{KLN}), showing that all local extensions of a certain form are indeed realizable.

\begin{prop}\label{thm:KLN2}
Let $K$ be a number field, $E/K(t)$ a 
\JK{finite} Galois extension,
 and $t_1\in \overline{K} \cup \{\infty\}$ a branch point.

Let $D, I$, and $N/K(t_1)$ be the Galois group, inertia group, and residue extension, respectively, of the completion at  $t\mapsto t_1$. Let $\fp$ be a prime away from the finite exceptional set of Proposition  \ref{mainlemma_kln}, 
with a degree $1$ prime $\fp'$ of $K(t_1)$ lying over it,  and define $D'$ as $\varphi^{-1}(D_{t_1,\fp'})$, with the notation of Proposition \ref{mainlemma_kln}.  
Let $T_\fp/K_\fp$ be a $D'$-extension 
such that $T_\fp^I/K_\fp \cong NK(t_1)_{\fp'}/K(t_1)_{\fp'}$ ($\cong NK_\fp/K_\fp$).

Then there exist infinitely many $t_0\in K$ 
such that the completion of $E_{t_0}/K$ at $\fp$  is $T_\fp/K_\fp$. 
\end{prop}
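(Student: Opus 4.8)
The plan is to read off the local completion of $E_{t_0}/K$ at $\fp$ directly from the power series expansion of $E/K(t)$ at the branch point $t\mapsto t_1$, and then to show that, as the chosen value $t_0$ varies $\fp$-adically, every admissible target $T_\fp$ is attained. Since $\fp'$ has relative degree $1$ over $\fp$ and $\fp$ avoids the exceptional set (in particular the primes ramifying in $K'/K$), we have $K'_{\fp'}=K_\fp$, so the completion of $E_{t_0}/K$ at $\fp$ coincides with the completion of $E_{t_0}/K'$ at $\fp'$; thus I would work throughout over the local field $K_\fp$, noting $t_1\in K'\subseteq K_\fp$. The first observation is that the ``shape'' of the target is forced rather than chosen: by Proposition \ref{mainlemma_kln}(iii), for \emph{any} non-branch value $t_0\in K$ with $I_\fp(t_0,t_1)$ coprime to $e:=|I|$, the completion of $E_{t_0}/K$ at $\fp$ is already a $D'$-extension whose fixed field under $I$ is the base-changed residue extension $NK_\fp/K_\fp$. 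Hence the only remaining freedom, and the only thing one must match, is the tamely, totally ramified $I\cong C_e$ extension sitting on top of $NK_\fp$.

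First I would describe this top layer explicitly. Writing $s=t-t_1$, the completion of $E'/K'(t)$ at $t\mapsto t_1$ is a $D$-extension of $K'((s))$ with tame cyclic inertia $I$ of order $e$ and residue extension $N/K'$; by the structure of tame ramification it has the Kummer form $N((s))\bigl(\sqrt[e]{c\,s}\bigr)$ for a ``leading coefficient'' $c\in N^\times$ determined by $E/K(t)$. Specializing $s\mapsto s_0:=t_0-t_1\in K_\fp$ and completing at $\fp'$ base-changes the residue extension to $NK_\fp$ (recovering the forced lower layer) and turns the top layer into $NK_\fp\bigl(\sqrt[e]{c\,s_0}\bigr)$. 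Provided $m:=\nu_\fp(s_0)=I_\fp(t_0,t_1)$ is coprime to $e$, Proposition \ref{beckmann} guarantees that the inertia is the full group $I$, so this is exactly a tame totally ramified $C_e$-extension whose Kummer class is $c\,s_0 \bmod (NK_\fp^\times)^e$. The key point is then that as $s_0$ ranges over elements of $K_\fp^\times$ of valuation coprime to $e$ with prescribed angular component, the class $c\,s_0$ sweeps out precisely the Kummer classes realizing the $D'$-extensions with fixed field $NK_\fp$.

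The main obstacle is this last surjectivity statement, i.e.\ showing that every admissible $T_\fp$ actually occurs. Writing the target as $T_\fp=NK_\fp\bigl(\sqrt[e]{\gamma}\bigr)$, its being a $D'$-extension (and not merely a $C_e$-extension of $NK_\fp$) imposes a compatibility of $\gamma$ with the $D_{t_1,\fp'}$-action on $NK_\fp$; one must verify that the geometric element $c\,s_0$, with $c$ coming from the branch-point expansion and $s_0$ lying in the fixed field $K_\fp$, satisfies the \emph{same} compatibility, so that the matching $\gamma/(c\,s_0)\in(NK_\fp^\times)^e$ can be solved for a suitable $s_0\in K_\fp^\times$ of valuation $m$ coprime to $e$. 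This is precisely where the hypothesis $T_\fp^I\cong NK_\fp$ is used, and it is the heart of the argument. Once a suitable $s_0$, and hence a target value $t_1+s_0\in K_\fp$, is found, the defining conditions on $t_0$, namely $\nu_\fp(t_0-t_1)=m$ with the prescribed angular component, $t_0$ not a branch point, and $\fp$ outside the exceptional set, are all $\fp$-adically open; since $K$ is dense in $K_\fp$, infinitely many $t_0\in K$ satisfy them, and each yields completion $T_\fp/K_\fp$ by the computation above.
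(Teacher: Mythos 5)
The paper does not actually prove this proposition; it is quoted from \cite[Theorem 4.4]{KLN}, so there is no in-text argument to compare yours against. Your outline is nonetheless consistent with how such statements are proved (a Beckmann-style local analysis of the Kummer generator at the branch point), and the reduction you perform is correct: Proposition \ref{mainlemma_kln}(iii) does force the unramified layer and the group $D'$, so the only freedom is the totally (tamely) ramified $C_e$-layer, encoded by the Kummer class of $c\,s_0$ in $NK_\fp^\times/(NK_\fp^\times)^e$.

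However, there is a genuine gap, and you have located it yourself without closing it: the assertion that $\{c\,s_0 : s_0\in K_\fp^\times,\ \nu_\fp(s_0)\ \text{coprime to}\ e\}$ sweeps out \emph{every} Kummer class whose associated extension is a $D'$-extension with fixed field $NK_\fp$ is exactly the content of the proposition, and ``one must verify that the matching $\gamma/(c s_0)\in (NK_\fp^\times)^e$ can be solved'' is not a verification. Concretely, writing $M=NK_\fp$ and $\sigma$ for the Frobenius of $M/K_\fp$, the condition that $M(\gamma^{1/e})/K_\fp$ be Galois amounts to $\sigma$-invariance of the class of $\gamma$ in $M^\times/(M^\times)^e$; but $\sigma$-invariant classes need not lie in $K_\fp^\times(M^\times)^e$ (the defect is controlled by $H^1(\langle\sigma\rangle,(M^\times)^e)\hookrightarrow H^2(\langle\sigma\rangle,\mu_e)$, which can be nonzero), so one cannot simply ``solve for $s_0$.'' You must use the stronger hypothesis that $T_\fp$ realizes the specific group $D'$ (i.e.\ the correct extension class of $D_{t_1,\fp'}$ by $I$, not merely the correct unramified quotient), and then show that the set of classes with this property is precisely the single coset $c\cdot K_\fp^\times\cdot(M^\times)^e$ in the relevant valuations --- e.g.\ by counting the surjections $G^{tr}_\fp\twoheadrightarrow D'$ compatible with the data against the image of $K_\fp^\times$ in $M^\times/(M^\times)^e$. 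A secondary, smaller gap: identifying the completion of $E_{t_0}/K$ at $\fp$ with the field $NK_\fp(\sqrt[e]{c\,s_0})$ for the \emph{specific} leading coefficient $c$ of the branch-point expansion requires a $\fp$-adic convergence argument in the spirit of Beckmann's proof; Propositions \ref{beckmann} and \ref{mainlemma_kln} as stated only pin down the inertia group, decomposition group and residue extension, which do not determine the field $T_\fp$ uniquely.
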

}

Finally, we recall the following well-known result about compatibility of weak approximation and Hilbert's irreducibility theorem, cf., e.g., \cite[Proposition 2.1]{PV05}. In particular, it justifies treating the Hilbert--Grunwald property ``one prime at a time".
\begin{lemma}
\label{lem:pv}
Let $K$ be a number field, $E/K(t)$ a 
Galois extension with group $G$ and $\mathcal{S}$ a finite set of primes of $K$. For each $\fp\in \mathcal{S}$, choose a  non-branch point $t_\fp\in K_\fp$ of $E\cdot K_\fp/K_\fp(t)$, and denote the specialization at $t_\fp$ by $L^\fp/K_\fp$. Then there exists $t_0\in K$ such that $E_{t_0}/K$ has Galois group $G$ and has completion $L^\fp/K_\fp$ at each prime $\fp\in \mathcal{S}$.\\
In particular, if for all but finitely many primes $\fp$ of $K$, every Galois extension of $K_\fp$ with group embedding into $G$ is in $Sp(E\cdot K_\fp/K_\fp(t))$, then $E/K(t)$ has the Hilbert--Grunwald property.
\end{lemma}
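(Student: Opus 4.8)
The plan is to prove the first assertion — a simultaneous realization statement — by combining weak approximation on $\mathbb{P}^1$ with a Hilbert irreducibility argument, and then to deduce the ``In particular'' clause by matching its hypotheses directly against the Hilbert--Grunwald property (HG). First I would observe that the local data are already consistent by hypothesis: for each $\fp \in \mathcal{S}$ we are handed a non-branch point $t_\fp \in K_\fp$, and the completion of $E \cdot K_\fp / K_\fp(t)$ at $t_\fp$ is by definition the prescribed extension $L^\fp/K_\fp$. The task is therefore to find a \emph{single} global value $t_0 \in K$ that simultaneously (a) is $\fp$-adically close to each $t_\fp$, so that the specialization $E_{t_0}/K$ has the right completion at every $\fp \in \mathcal{S}$, and (b) is a Hilbert-irreducible value, so that $E_{t_0}/K$ actually has full Galois group $G$.

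The key technical input is that the completion of a specialization $E_{t_0}/K$ at $\fp$ depends continuously on $t_0$ in the $\fp$-adic topology when $t_0$ is close to a fixed non-branch point. More precisely, for each $\fp \in \mathcal{S}$ there is a $\fp$-adic neighborhood $U_\fp \subset \mathbb{P}^1(K_\fp)$ of $t_\fp$ such that for every $t_0 \in U_\fp$ the completion of $E_{t_0}/K$ at $\fp$ is isomorphic to $L^\fp/K_\fp$; this is the standard continuity-of-specialization fact underlying \cite[Proposition 2.1]{PV05}, and it holds because the local factorization of the defining data of $E/K(t)$, and hence the local Galois behavior, is locally constant in $t_0$ away from branch points and bad reduction. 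Given this, the sets $U_\fp$ are nonempty open subsets of $\mathbb{P}^1(K_\fp)$ for $\fp \in \mathcal{S}$. By weak approximation on $\mathbb{P}^1$ over the number field $K$, the image of $\mathbb{P}^1(K)$ in $\prod_{\fp \in \mathcal{S}} \mathbb{P}^1(K_\fp)$ is dense, so there exist $K$-rational points lying in $\prod_{\fp \in \mathcal{S}} U_\fp$. The final step is to intersect this condition with Hilbert's irreducibility theorem: the set of $t_0 \in K$ for which $E_{t_0}/K$ fails to have group $G$ is a thin set, and the compatibility of Hilbert irreducibility with finitely many $\fp$-adic open conditions (again the content of \cite[Proposition 2.1]{PV05}) guarantees that one may choose $t_0 \in \prod_{\fp} U_\fp$ that is moreover Hilbert-irreducible. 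Such a $t_0$ has $\mathrm{Gal}(E_{t_0}/K) = G$ and completion $L^\fp/K_\fp$ at each $\fp \in \mathcal{S}$, as required.

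For the ``In particular'' clause I would argue as follows. Assume that, for all but finitely many primes $\fp$ of $K$, every Galois extension of $K_\fp$ with group embedding into $G$ lies in $Sp(E \cdot K_\fp / K_\fp(t))$. Let $\mathcal{S}_0$ be the (finite) set of exceptional primes where this realizability fails, together with any finitely many primes that must be excluded for the first assertion to apply. Given any Grunwald problem $(G, (L^{(\fp)}/K_\fp), \fp \in \mathcal{S})$ with $\mathcal{S}$ disjoint from $\mathcal{S}_0$, each prescribed $L^{(\fp)}/K_\fp$ has group embedding into $G$ and hence, by the realizability hypothesis, arises as the completion at $\fp$ of some specialization of $E \cdot K_\fp/K_\fp(t)$; that is, $L^{(\fp)}/K_\fp$ is the specialization at some non-branch point $t_\fp \in K_\fp$. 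Feeding these $t_\fp$ into the first part of the lemma produces a single $t_0 \in K$ with $\mathrm{Gal}(E_{t_0}/K) = G$ whose completion at each $\fp \in \mathcal{S}$ is $L^{(\fp)}/K_\fp$, i.e., $E_{t_0}/K$ solves the Grunwald problem. This is exactly condition (HG), so $E/K(t)$ has the Hilbert--Grunwald property.

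The main obstacle I expect is the interface between the $\fp$-adic open conditions and Hilbert's irreducibility theorem — specifically, establishing that the Hilbert set can be taken to meet the prescribed product of $\fp$-adic neighborhoods. This is precisely what \cite[Proposition 2.1]{PV05} supplies, so in the write-up this step should be handled by citation rather than reproved; the continuity-of-specialization statement feeding into it is the part that genuinely requires the non-branch-point hypotheses on the $t_\fp$ and the restriction to primes outside the finite exceptional set. Everything else is a matter of bookkeeping, in particular tracking that the exceptional set $\mathcal{S}_0$ absorbs both the bad primes of $E/K(t)$ and the primes where local realizability is not assumed.
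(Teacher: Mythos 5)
Your proposal is correct and follows exactly the route the paper intends: the paper itself gives no proof of this lemma but simply cites \cite[Proposition 2.1]{PV05}, and your argument (continuity of the local behavior of specializations near a non-branch point, weak approximation on $\mathbb{P}^1$, and compatibility of Hilbert's irreducibility theorem with finitely many $\fp$-adic open conditions) is precisely the standard content of that citation, with the ``In particular'' clause deduced correctly.
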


\subsection{Embedding problems}
Given a Galois extension $L/K$, an {\it embedding problem} is an epimorphism $\pi:G\ra \Gal(L/K)$, and a {\it solution} is a homomorphism $\psi:G_K\ra G$ such that $\pi\circ \psi$ coincides with the restriction map. A solution  is {\it proper} if it is surjective. If $\ker\pi$ is contained in the Frattini subgroup of $G$, then the embedding problem $\pi$ is said to be {\it Frattini}.  In such a case, every solution has to be proper. 

\section{Three obstructions  to local dimension $1$} 
\label{sec:hgp}
\subsection{Statement of the obstructions}


Below, we present three results, Corollary \ref{cyclic_dec_group} and Lemmas \ref{cyclic_dec_group2} and \ref{cyclic_dec_group3}, each of which restricts  the possible $K_\fp$-specializations of a transcendence degree $1$ function field extension $E/F$ over a number field $K$. As will be detailed in the proof of Theorem \ref{thm:hgp}, for any ``sufficiently complicated" group $G$, these restrictions amount to obstructions to the assertion $\textrm{ld}_K(G)=1$; in particular, they ensure that \JK{transcendence degree $1$ extensions over $K$} with group $G$ can \DN{never} have the Hilbert--Grunwald property. The three obstructions are, in short, (1) the existence of a non-cyclic abelian subgroup, (2) the existence of an element whose order is not a prime-power, and (3) the existence of an element of order $4$.\\
All of them are applications of Proposition \ref{mainlemma_kln}; the first one already occurred in the key case $F=K(t)$  in \cite[Theorem 6.3]{KLN}, and extended to arbitrary transcendence-degree-$1$ extensions $E_i/F_i$, $i=1,\dots,r$ in \cite[Theorem B.1]{KN}. 
The other two, however, are new. 

\begin{kor}
\label{cyclic_dec_group}
Let $G$ be a finite group, and $E_i/F_i$, $i=1,\dots, r$, finitely many  Galois extensions with group $G$ of transcendence degree $1$ function fields over $K$. Then there exists a number field $L\supseteq K$ such that the following holds: If $\mathcal{S}$ denotes the set of primes of $K$ which split completely in $L$, then for all $\mathfrak{p}\in \mathcal{S}$ and all degree-$1$ places $t_0$ of $F_i\cdot K_\mathfrak{p}$ ($i=1,\dots, r$), the specialization $(E_i\cdot K_\mathfrak{p})_{t_0}/K_\mathfrak{p}$ has cyclic decomposition group at $\mathfrak{p}$.
\end{kor}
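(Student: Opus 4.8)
The plan is to deduce this directly from Proposition \ref{mainlemma_kln}, which is the natural tool since it controls the decomposition group of a specialization in terms of data at a branch point of the function field extension. First I would reduce to the case of a single extension $E/F$ with a rational base, i.e.\ $F = K(t)$: the finitely many extensions $E_i/F_i$ can be handled one at a time (taking $L$ to be a compositum of the fields produced for each $i$), and since $F_i$ has transcendence degree $1$ over $K$, one may, after a suitable finite base change, view $E_i/F_i$ as a cover of a curve and restrict attention to (the finitely many relevant) branch points; the key point is that a degree-$1$ place $t_0$ of $F_i \cdot K_\fp$ specializes to something ramified at $\fp$ only when $t_0$ meets a branch point in the sense of positive intersection multiplicity (Proposition \ref{beckmann}), and at unramified primes the decomposition group is cyclic (generated by Frobenius) automatically.

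The heart of the argument is therefore the ramified case. Fix a branch point $t_i$ of $E/K(t)$, set $E' = E(t_i)$ and $K' = K(t_i)$, and let $D, I$ be the decomposition and inertia groups at $t \mapsto t_i$ in $E'/K'(t)$, with residue extension $N/K'$. By Proposition \ref{mainlemma_kln}(ii), whenever $\fp$ has a degree-$1$ prime $\fp'$ in $K'$ and $I_\fp(t_0,t_i) > 0$, the decomposition group $D_{t_0,\fp}$ of the specialization surjects onto $D_{t_i,\fp'}$ with kernel contained in $I$; moreover $D_{t_0,\fp}$ itself is an extension of a subgroup of $D/I$ by a \emph{cyclic} inertia group (the inertia generator is a power of $\tau$, by Proposition \ref{beckmann}). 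So to force $D_{t_0,\fp}$ to be cyclic, I must force the quotient $D_{t_i,\fp'} = D_{t_0,\fp}/I_{t_0,\fp}$ to be cyclic and, more precisely, arrange that the full group $D_{t_0,\fp}$ becomes cyclic. The decomposition group $D_{t_i,\fp'}$ is the Galois group of the completion of the residue extension $N/K'$ at $\fp'$, and this is governed by the arithmetic of $K'$ at $\fp'$.

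This is where the field $L$ enters, and it is the main obstacle: I want to choose $L \supseteq K$ so that splitting completely in $L$ kills the non-cyclicity. Concretely, I would take $L$ to contain (for each branch point $t_i$) the Galois closure over $K$ of the residue field data $N$ together with the relevant fields of definition $K'=K(t_i)$, so that for $\fp \in \mathcal{S}$ every branch point relevant to ramification at $\fp$ is $K_\fp$-rational and the residue extension $N/K'$ splits completely at $\fp'$ --- making $D_{t_i,\fp'}$ trivial. Then $D_{t_0,\fp} \subseteq I$ is cyclic, as desired, since inertia is cyclic in the tame case (and one discards the finitely many wildly ramified or otherwise exceptional primes, which is permissible as they can be absorbed into $\mathcal{S}_0$ or excluded by enlarging $L$). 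The delicate step is ensuring the compatibility of ``degree-$1$ prime $\fp'$ of $K'$'' with the splitting condition and handling the possibility that \emph{several} branch points contribute intersection multiplicity at the same $\fp$; but since $\fp$ is assumed to split completely in a large enough $L$ containing all the $K(t_i)$ and the compositum of all residue extensions, each contributing inertia generator lies in a common cyclic group of the tame quotient $G^{tr}_\fp$, and the decomposition group, being generated by Frobenius (trivial on $\mathcal{S}$) together with these inertia elements, is cyclic. I expect the bookkeeping of the exceptional prime set and the verification that distinct branch points cannot simultaneously force a non-cyclic contribution to be the most technical, though not conceptually hard, part.
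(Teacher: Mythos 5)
Your argument is correct and is essentially the intended one: the paper does not reprove this corollary but cites \cite[Theorem 6.3]{KLN} and \cite[Theorem B.1]{KN}, whose proof is exactly your reduction via Proposition \ref{mainlemma_kln} --- take $L$ to contain the Galois closures of all residue extensions $E(t_i)_{t_i}/K$ at branch points (plus a finite enlargement to kill the exceptional primes), so that for $\fp$ split in $L$ the quotient $D_{t_i,\fp'}$ is trivial and $D_{t_0,\fp}$ lands inside the cyclic inertia group $I$, while unramified specializations are handled by Frobenius. The reduction from arbitrary $F_i$ to a rational base is even easier here than in the paper's Section 3.3, since a quotient of a cyclic group is cyclic and no control of ramification indices is needed.
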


\begin{lemma}
\label{cyclic_dec_group2}
Let $G$ be a finite group, let $q,r$ be distinct prime numbers, and let $K$ be a number field 
such that $K(\zeta_r)$ does not contain a primitive $q$-th root of unity.
Let $E_i/F_i$, $i=1,\dots, r$, be finitely many $G$-extensions of transcendence degree $1$ function fields over $K$. Then there exists a non-empty Chebotarev set $\mathcal{S}$ of primes $\mathfrak{p}$ of $K$, completely split in $K(\zeta_r)$, 
such that 
 the specialization $(E_i\cdot K_\mathfrak{p})_{t_0}/K_\mathfrak{p}$, at any degree-$1$ place $t_0$ of $F_i\cdot K_\mathfrak{p}$, for any $i=1,\dots, r$, does {\underline{not}} fulfill 
the following condition:\\ 
\\
 %
%
\JK{$(E_i\cdot K_\mathfrak{p})_{t_0}/K_\mathfrak{p}$ is tamely ramified of ramification index divisible by $r$, but coprime to $q$; and of residue degree divisible by $q$.
 }
 %
 %
 \end{lemma}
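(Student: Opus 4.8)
The plan is to translate the forbidden local behaviour into a statement about Frobenius orders in a \emph{fixed} number field and then kill it with a single Chebotarev condition, using the hypothesis $\zeta_q\notin K(\zeta_r)$ to control the prime $q$.

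First I would reduce the residue degree of a specialization to branch-point data. After base change to $K_\mathfrak{p}$ the finitely many $E_i\cdot K_\mathfrak{p}/F_i\cdot K_\mathfrak{p}$ have the same branch points $t_j$ (finitely many, defined over number fields $K'_j:=K(t_j)$), the same inertia groups $I_j=\langle\tau_j\rangle$ of orders $e^*_j$, and the same residue extensions $N_j/K'_j$; the tools of Proposition \ref{beckmann} and Proposition \ref{mainlemma_kln} apply by Remark \ref{rem:beckmann_nonreg}, and extend from $K(t)$ to arbitrary transcendence-degree-$1$ base as in Corollary \ref{cyclic_dec_group}. By Proposition \ref{beckmann}, outside a fixed finite set the ramification index of any specialization at $\mathfrak{p}$ divides the order $e^*_j$ of the inertia at the branch point it approaches; so a ramification index divisible by $r$ forces $r\mid e^*_j$. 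By part (ii) of Proposition \ref{mainlemma_kln} the residue degree equals $|D_{t_j,\mathfrak{p}'}|$, where $\mathfrak{p}'$ is a degree-$1$ prime of $K'_j$ over $\mathfrak{p}$; away from the primes ramifying in any $N_j$ this is exactly the order of $\mathrm{Frob}_{\mathfrak{p}'}$ in $N_j/K'_j$. (The same description governs the local base points $t_0\in F_i\cdot K_\mathfrak{p}$, via Proposition \ref{thm:KLN2}.) Thus the residue degree of every relevant specialization at $\mathfrak{p}$ is the order of a Frobenius element in one of finitely many fixed extensions $N_j$.

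Next I would form one Galois number field $\Omega/K$ containing $K(\zeta_{rq})$, all the $K'_j$, and the Galois closures of all the $N_j$, and set $\Gamma=\Gal(\Omega/K)$. The aim is to produce $\sigma\in\Gamma$ with: (a) $\sigma|_{K(\zeta_r)}=1$; (b) $\sigma|_{K(\zeta_q)}\neq 1$; and (c) $\mathrm{ord}(\sigma)$ coprime to $q$. This is where the hypothesis enters. Since $\zeta_q\notin K(\zeta_r)$, the subgroup $H:=\Gal(\Omega/K(\zeta_r))$ does not fix $K(\zeta_q)$ pointwise, so some $\rho\in H$ has $\rho|_{K(\zeta_q)}\neq 1$. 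As $\Gal(K(\zeta_q)/K)$ embeds into $(\mathbb{Z}/q)^\times$, the order of $\rho|_{K(\zeta_q)}$ divides $q-1$ and is coprime to $q$; hence replacing $\rho$ by its prime-to-$q$ part $\sigma=\rho^{q^a}$ (where $q^a$ is the $q$-part of $\mathrm{ord}(\rho)$) keeps $\sigma\in H$, so (a) holds, leaves $\sigma|_{K(\zeta_q)}\neq 1$ since raising to $q^a$ is a bijection on the cyclic group $\langle\rho|_{K(\zeta_q)}\rangle$ of order prime to $q$, so (b) holds, and gives (c) by construction.

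Finally, let $\mathcal S$ be the Chebotarev set of primes $\mathfrak{p}$ (outside the finite exceptional set) with $\mathrm{Frob}_\mathfrak{p}$ conjugate to $\sigma$; it is nonempty of positive density, and splits completely in $K(\zeta_r)$ by (a). For $\mathfrak{p}\in\mathcal S$ and any branch point $t_j$ carrying a degree-$1$ prime $\mathfrak{p}'$, the element $\mathrm{Frob}_{\mathfrak{p}'}$ restricts on $N_j$ to a power of $\sigma$, whose order divides $\mathrm{ord}(\sigma)$ and is coprime to $q$ by (c); hence every specialization at $\mathfrak{p}$ has residue degree coprime to $q$, so the forbidden conjunction (residue degree divisible by $q$) never occurs. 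Property (b), i.e.\ $N(\mathfrak{p})\not\equiv 1\pmod q$, is what makes $\mathcal S$ genuinely useful downstream and is the real reason the hypothesis $\zeta_q\notin K(\zeta_r)$ is required, rather than simply taking primes that split in all of $\Omega$. The main obstacle I anticipate is the bookkeeping of the first paragraph: verifying that Propositions \ref{beckmann}--\ref{thm:KLN2} apply uniformly in the present generality (arbitrary transcendence-degree-$1$ $F_i$, non-$K$-regular extensions, and local base points $t_0\in F_i\cdot K_\mathfrak{p}$), so that the residue degree is identified with a Frobenius order in a fixed number field uniformly over $i$ and over all $t_0$; once that is secured, the Chebotarev step is formal.
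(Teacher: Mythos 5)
There is a genuine gap, and it sits exactly at the step your whole argument rests on: the claim that ``by part (ii) of Proposition \ref{mainlemma_kln} the residue degree equals $|D_{t_j,\mathfrak{p}'}|$'', i.e.\ that the residue degree of a specialization at $\fp$ is always the order of a Frobenius element in one of the finitely many fixed residue extensions $N_j/K'_j$. Part (ii) only asserts $\varphi(D_{t_0,\fp})=D_{t_j,\fp'}$ for the projection $\varphi:D\to D/I$ modulo the \emph{geometric} inertia group $I$ at the branch point; it does not bound $D_{t_0,\fp}$ itself. When the intersection multiplicity $I_\fp(t_0,t_j)$ is not coprime to $e_j^*=|I|$, the inertia group of the specialization is only the proper subgroup $\langle\tau^{I_\fp(t_0,t_j)}\rangle$ of $I$, and the quotient of $I$ that gets ``lost'' from ramification reappears in the unramified part of the completion, i.e.\ it inflates the residue degree. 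Concretely: if a branch point has inertia of order $qr$ and $t_0$ meets it with multiplicity $q$, the specialization has ramification index $r$ (divisible by $r$, coprime to $q$) and residue degree divisible by $q$, \emph{even if} $\mathrm{Frob}_{\fp'}$ is trivial in $N_j/K'_j$. This is precisely the forbidden configuration of the lemma, your condition (c) does not see it, and so your proof of ``every specialization at $\fp$ has residue degree coprime to $q$'' fails. It is also the sanity check you half-noticed: if your identification were correct, primes splitting completely in $\Omega$ would already do the job and the hypothesis $\zeta_q\notin K(\zeta_r)$ would be superfluous for this lemma.

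Your Chebotarev set is nevertheless essentially the right one (the paper takes primes split in $K(\zeta_r)$, non-split in $K(\zeta_q)$, with Frobenius of order coprime to $q$ in the compositum of the residue fields at branch points), and your condition (c) does dispose of the case where the branch point's ramification index is coprime to $q$, which is the paper's first case. What is missing is the second case, where $q$ divides the geometric ramification index: there the paper uses your unused condition (b). Since $q\divides e_j^*$ forces $\zeta_q$ to lie in the residue field at that branch point and hence in the completion of $E_{t_0}$ at $\fp$, non-splitness of $\fp$ in $K(\zeta_q)$ forces $D_{t_0,\fp}/I_{t_0,\fp}$ to surject onto a nontrivial subgroup of $\Gal(K(\zeta_q)/K)$, which acts faithfully (via its action on $\mu_q$, equivalently on the geometric inertia) on the order-$q$ part $V_2/V_1$ that was pushed into the residue quotient; this makes $D_{t_0,\fp}/I_{t_0,\fp}$ non-abelian, contradicting its cyclicity. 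You need to add this branch-cycle/cyclotomic-action argument to close the case your Frobenius-order bookkeeping cannot reach.
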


\begin{lemma}
\label{cyclic_dec_group3}
Let $G$ be a finite group 
and let $K\subset \mathbb{R}$ be a real number field.
Let $E_i/F_i$, $i=1,\dots, r$, be finitely many  $G$-extensions of transcendence degree $1$ function fields over $K$. Then there exists a non-empty Chebotarev set $\mathcal{S}$ of primes $\mathfrak{p}$ of $K$ such that, for any $i\in \{1,\dots, r\}$, 
the specialization $(E_i\cdot K_\mathfrak{p})_{t_0}/K_\mathfrak{p}$ at any  degree-$1$ place $t_0$ of $F_i\cdot K_\mathfrak{p}$, does \underline{not} have inertia group of order $2$ \DN{which embeds} into an order $4$ cyclic subgroup of the decomposition group. 
\end{lemma}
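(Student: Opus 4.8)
The plan is to isolate the obstruction in a single group-theoretic incompatibility: over a real field, complex conjugation inverts every tame inertia generator, and an involution $c'$ with $c'\tau c'^{-1}=\tau^{-1}$ cannot sit at the bottom of a cyclic group of order $4$ whose square lies in $\langle\tau\rangle$. I will feed this into the residue-field dictionary of Proposition \ref{mainlemma_kln}, using Chebotarev to force the Frobenius of the primes in $\mathcal S$ to be complex conjugation. I first reduce to branch points: by Proposition \ref{beckmann} (and Remark \ref{rem:beckmann_nonreg}), away from a finite set depending only on the $E_i/F_i$, any $\mathfrak p$ ramifying in a specialization does so through a branch point $t_i$ with $I_\mathfrak{p}(t_0,t_i)>0$, which in particular forces a degree-$1$ prime $\mathfrak p'$ of $K_i':=K(t_i)$ over $\mathfrak p$ to exist. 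The residue-field analysis of Propositions \ref{beckmann} and \ref{mainlemma_kln} is insensitive to whether the base is $K(t)$ or the general $F_i$ (cf.\ Remark \ref{rem:beckmann_nonreg}), so it applies to the branch places of each $E_i/F_i$. It then suffices to exclude, for each of the finitely many branch points, an order-$4$ element $g$ of the specialized decomposition group with $g^2$ generating the order-$2$ specialized inertia.

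Next I set up the Chebotarev condition. Let $L/K$ be the Galois closure of the compositum of all residue extensions $N_i$ (the residue extension at $t\mapsto t_i$ in $E_i'/K_i'(t)$, with $E_i'=E_i(t_i)$) together with all the $K_i'$, and let $c\in\Gal(L/K)$ be the image of complex conjugation for the given embedding $K\subset\mathbb R$. I take $\mathcal S$ to be the (non-empty) set of primes $\mathfrak p$ of $K$, outside the finite exceptional set, whose Frobenius is conjugate to $c$; fix $\mathfrak p\in\mathcal S$ and a prime of $L$ above it with Frobenius exactly $c$. For a branch point $t_i$ that actually contributes ramification at $\mathfrak p$ we need a degree-$1$ prime $\mathfrak p'$ of $K_i'$, which forces $c$ to fix an embedding of $K_i'$, i.e.\ forces some conjugate of $t_i$ to be real; replacing $t_i$ by that (equally valid) conjugate, I may assume $t_i\in\mathbb R$. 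By functoriality of Frobenius, $\mathrm{Frob}_{\mathfrak p'}$ in $N_i/K_i'$ is then the restriction of $c$, namely complex conjugation on the number field $N_i\subset\mathbb C$.

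I then run the group theory. By Proposition \ref{mainlemma_kln}, $\varphi(D_{t_0,\mathfrak p})=\langle\mathrm{Frob}_{\mathfrak p'}\rangle$, where $\varphi\colon D\to D/I$ is reduction modulo the branch inertia $I=\langle\tau\rangle$. For a real branch point, the branch-cycle description of $\tau$, acting by $(t-t_i)^{1/e}\mapsto\zeta_e\,(t-t_i)^{1/e}$, shows that the complex conjugation $c'\in D$ is an involution satisfying $c'\tau c'^{-1}=\tau^{-1}$ and $\varphi(c')=\mathrm{Frob}_{\mathfrak p'}$. Now suppose a forbidden $g$ existed, of order $4$ with $g^2\in I$ of order $2$ and $g\notin I$. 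Then $\varphi(g)$ is the unique involution of the cyclic group $\langle\mathrm{Frob}_{\mathfrak p'}\rangle$, so $\varphi(g)=\varphi(c')$ and $g=c'\tau^{j}$ for some $j$. Using $c'^{2}=1$ and $c'\tau c'^{-1}=\tau^{-1}$, one computes $g^{2}=c'\tau^{j}c'\tau^{j}=\tau^{-j}\tau^{j}=1$, contradicting $\mathrm{ord}(g)=4$; the degenerate case $\varphi(c')=1$ likewise forces $g\in I$, whence the inertia is cyclic of order $4$ rather than of order $2$. Either way the forbidden local shape is impossible, which is the assertion.

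The main obstacle is precisely the real-structure input of the third paragraph: establishing that complex conjugation lies in the decomposition group at a real branch point, is an involution there, inverts the inertia generator, and restricts to $\mathrm{Frob}_{\mathfrak p'}$ on $N_i$. This is where $K\subset\mathbb R$ is essential — over a field containing $\sqrt{-1}$ there is no such conjugation, matching the fact that generalized quaternion groups survive in Theorem \ref{thm:qi} — and it is the step that must be made rigorous via the branch-cycle argument, with the usual care about conjugacy and choice of the place above $t\mapsto t_i$. The remaining bookkeeping (passing to a real conjugate branch point, treating the general $F_i$ in place of $K(t)$, and discarding the finitely many exceptional and ramified primes) is routine.
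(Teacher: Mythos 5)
Your main line of attack is genuinely different from the paper's and the central computation is sound: complex conjugation does lift to an involution $c'$ of the decomposition group $D$ at a real branch point with $c'\tau c'^{-1}=\tau^{-1}$ and $\varphi(c')=\mathrm{Frob}_{\mathfrak p'}$, so every element of $\varphi^{-1}(\mathrm{Frob}_{\mathfrak p'})=c'\,I$ squares to the identity, and the case $\varphi(g)=\mathrm{Frob}_{\mathfrak p'}\neq 1$ is correctly excluded. (The paper instead argues through residue fields: it shows the residue extension $M/L$, $L=M\cap\mathbb R$, would have to embed into a $C_4$-extension, reduces via \cite[Proposition 16.5.1]{FJ} to writing a negative element as a sum of two squares in the real field $L$, and then forces $e=2$ and a trivial residue extension.) However, your treatment of the degenerate case $\varphi(g)=1$ contains a genuine gap. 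You write that $\varphi(g)=1$ forces $g\in I$, ``whence the inertia is cyclic of order $4$ rather than of order $2$.'' This conflates the branch inertia $I$ with the specialized inertia $I_{t_0,\mathfrak p}$. Proposition \ref{beckmann} gives $I_{t_0,\mathfrak p}=\langle\tau^{m}\rangle$ with $m=I_\mathfrak{p}(t_0,t_i)$, while Proposition \ref{mainlemma_kln}ii) only controls the image $\varphi(D_{t_0,\mathfrak p})$, not the kernel $D_{t_0,\mathfrak p}\cap I$; part iii), which would give $D_{t_0,\mathfrak p}\cap I=I_{t_0,\mathfrak p}$, applies only when $m$ is coprime to $e=|I|$ --- and that is precisely \emph{not} the situation here, since an order-$2$ specialized inertia inside an order-$e$ branch inertia with $4\mid e$ requires $\gcd(m,e)=e/2>1$. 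In that non-coprime regime the specialization genuinely can acquire extra \emph{unramified} degree from the unit part of $t_0-t_i$ (concretely, with $x^e=b(t-t_i)$ and $t_0-t_i=\pi^m u$, the element $(bu)^{2/e}$ may generate a nontrivial unramified extension of the completed residue field), so $D_{t_0,\mathfrak p}\cap I$ may strictly contain $I_{t_0,\mathfrak p}$ and may a priori contain an element of order $4$. Your argument as written does not exclude this, and it is exactly the configuration that the paper's sum-of-two-squares step is designed to kill.

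The gap is fillable with the tools you already have, but it needs an explicit extra step. If $g\in D_{t_0,\mathfrak p}\cap I$ has order $4$, then $4\mid e$, hence $\zeta_4$ lies in the residue field $N_i$ at the branch point, so $N_i$ is not real and $\mathrm{Frob}_{\mathfrak p'}=\bar c\neq 1$; since $\varphi(D_{t_0,\mathfrak p})=\langle\bar c\rangle$, there is some $h\in D_{t_0,\mathfrak p}$ with $\varphi(h)=\bar c$, i.e.\ $h=c'\tau^{j}$, and then $hgh^{-1}=g^{-1}\neq g$, so $D_{t_0,\mathfrak p}$ is non-abelian. This contradicts the structure of tame local Galois groups: an inertia subgroup of order $2$ is central (as $\mathrm{Aut}(C_2)=1$) and has cyclic quotient, so $D_{t_0,\mathfrak p}$ must be abelian. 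With this supplement (and with the reduction from general base fields $F_i$ to a rational base done carefully, as in the paper's Lemma \ref{techn_lemma} --- Propositions \ref{beckmann} and \ref{mainlemma_kln} are stated over $K(t)$, so quoting them for arbitrary $F_i$ is not automatic), your argument becomes a complete and arguably more elementary proof than the one in the paper.
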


We divide the proof of Lemmas \ref{cyclic_dec_group2} and \ref{cyclic_dec_group3} into three stages. First, we look at the case of a single extension $E/F$ with rational base field $F=K(t)$.  Next, we derive the case of an arbitrary (single) extension $E/F$. Finally, we progress to finitely many extensions $E_i/F_i$, $i=1,\dots, r$.

\subsection{Proof of Lemmas \ref{cyclic_dec_group2} and \ref{cyclic_dec_group3}. Part 1: The case $r=1$ and $F_1=K(t)$.}
\label{sec:lemma_pf2}
We \JK{begin} the proof of Lemmas \ref{cyclic_dec_group2} and \ref{cyclic_dec_group3} by showing that they hold in the special case $r=1$ and $F:=F_1=K(t)$. Before going into the proofs of the two individual cases, we note that we may furthermore restrict to specializations at {\it non}-branch points $t_0$ of $E/K(t)$ (up to excluding finitely many more primes from the respective Chebotarev set $\mathcal{S}$), since the finitely many residue fields at branch points can give ramified completions only at finitely many primes; and that moreover, it suffices to consider $K$-rational specializations $t_0\in \mathbb{P}^1(K)$ due to Lemma \ref{lem:pv}.
\begin{proof}[Proof of Lemma \ref{cyclic_dec_group2} \JK{in the case $F=K(t)$}]
Let $\mathcal{S}'$ be the set of primes of $K$ which split completely in $K(\zeta_r)$, but not in $K(\zeta_q)$. Since $K(\zeta_q)$ is not contained in $K(\zeta_r)$, this is clearly a non-empty Chebotarev set. Now let $t_1,\dots,t_r$ be the branch points of $E/K(t)$ and $L/K$ 
be the Galois closure (over $K$) of the compositum of all residue extensions $E(t_i)_{t_i}/K(t_i)$ at these branch points. Let $\mathcal{S}''$ be the set of all primes of $K$ which are unramified in $L(\zeta_{qr})/K$ with residue degree coprime to $q$. We claim that \JK{$\mathcal{S}:=\mathcal{S}'\cap\mathcal{S}''$}  is still a non-empty Chebotarev set. Indeed, $Gal(K(\zeta_{qr})/K) \le G_1\times G_2$, where $1\ne G_1\le C_{q-1}$ and $G_2\le C_{r-1}$, and the kernel of the projection to the second component is non-trivial. The set $\mathcal{S}'$ is the set of all primes whose Frobenius $\sigma$ in $K(\zeta_{qr})/K$ is a non-identity element of this kernel. By Chebotarev's density theorem, to show the claim it suffices to show that any non-identity subgroup $U$ of the cyclic group $G_1=\langle\sigma\rangle$ has a lift to a cyclic subgroup of $Gal(L(\zeta_{qr})/K)$ whose order is still coprime to $q$. But this is elementary, since if $\widehat{U} = \langle\widehat{\sigma}\rangle$ is any such subgroup and $q^e$ the highest $q$-power dividing its order, then $\langle\widehat{\sigma}^{q^e}\rangle$ is a subgroup as desired. 

Now denote by $\mathcal{R}$ the set of primes $\mathfrak{p}$ of $K$ for which some specialization $E_{t_0}/K$, with $t_0$ a degree-$1$ place of $K(t)$, has cyclic inertia group $I$ of order divisible by $r$ \JK{and coprime to $q$}, 
and residue degree $|D/I|$ divisible by $q$. It suffices to show that $\mathcal{R}\cap \mathcal{S}$ is finite.
Up to exempting a finite set of primes, Proposition \ref{beckmann} yields the following: If a prime $\mathfrak{p}$ of $K$ is ramified of ramification index divisible by $r$ in a specialization $E_{t_0}/K$, then the intersection multiplicity $I_{\mathfrak{p}}(t_0,t_i)$ of $t_0$ and $t_i$ at $\mathfrak{p}$ must be positive for some branch point $t_i$ of $E/K(t)$ whose ramification index is divisible by $r$. We first claim that, for $\mathfrak{p}\in \mathcal{R}\cap \mathcal{S}$ (up to finitely many), this branch point $t_i$ cannot have
ramification index coprime to $q$. Indeed, if it did, then Proposition \ref{mainlemma_kln} would imply that the only way for $\mathfrak{p}$ to have residue degree divisible by $q$ in $E_{t_0}/K$ is that there is a prime $\mathfrak{p}'$ of $K(t_i)$ extending $\mathfrak{p}$ of relative degree $1$ (i.e., $K(t_i)_{\mathfrak{p}'}$ is identified with $K_\mathfrak{p}$), and such that the residue degree at $\mathfrak{p}'$ in $E(t_i)_{t_i}/K(t_i)$ is divisible by $q$. But in this case, the Frobenius of $\mathfrak{p}$ in $L/K$ is certainly of order divisible by $q$, meaning that $\mathfrak{p}\notin\mathcal{S}$. 

Proposition \ref{mainlemma_kln} thus yields that the only way to obtain \DN{a specialization $E_{t_0}/K$ whose decomposition group at $\fp$ is of order divisible by $qr$ and its ramification index is divisible by $r$}   is that the factor $q$ ``stems" from the inertia group at $t\mapsto t_i$; i.e., this inertia group contains subgroups $V_1\le V_2$ such that $|V_1|$ is divisible by $r$ but not by $q$, $|V_2/V_1|$ is divisible by $q$, $V_1$ surjects onto the inertia group $I$ at $\mathfrak{p}$ in $E_{t_0}/K$
 and $V_2/V_1$ injects into $D/I$ (where $D$ is the decomposition group at $\mathfrak{p}$ in $E_{t_0}/K$).

\JK{But also, since the ramification index at $t\mapsto t_i$ is divisible by $q$, it follows that $\zeta_q$ is contained in the residue extension $E(t_i)_{t_i}$, e.g.\ by \cite[Lemma 2.3]{KLN}), and thus by Proposition \ref{mainlemma_kln} also in the completion of $E_{t_0}$ at $\mathfrak{p}$. But $\mathfrak{p}$ is non-split in the  extension $K(\zeta_q)/K$, whence 
 $D/I$ maps onto a non-trivial subgroup of $\textrm{Gal}(K(\zeta_q)/K)$.}
 
 Finally, \JK{this subgroup acts faithfully on the inertia group at $t\mapsto t_i$ via its action on $\mu_q$, and hence (since $|V_1|$ is coprime to $q$) also on $V_2/V_1$}, 
  meaning that $D/I$ is non-abelian. This is of course a contradiction, since $D/I$ is always cyclic.

 We have therefore shown that, up to excluding finitely many primes (which can be achieved by choosing a suitable proper Chebotarev subset), the set $\mathcal{S}$ fulfills the assertion of the theorem.
\end{proof}

\begin{proof}[Proof of Lemma \ref{cyclic_dec_group3} in the \JK{case $F=K(t)$}]
Let $\mathcal{S}'$ be the set of primes of $K$ which remain inert in $K(\sqrt{-1})$. 
Let $F/K$ be the Galois closure of the compositum of all residue extensions at branch points of $E/K(t)$. Let $\sigma\in Gal(F/K)$ be a complex conjugation on $F$ and let $\mathcal{S}''$ be the set of primes of $K$ whose Frobenius class in $F/K$ is the class of $\sigma$. It is obvious (upon considering the compositum of $F/K$ and $K(\sqrt{-1})/K$) that $\mathcal{S}'\cap \mathcal{S}''$ is a non-empty Chebotarev set, so we set $\mathcal{S}=\mathcal{S}'\cap \mathcal{S}''$.
Denote by $\mathcal{R}$ the set of primes $\mathfrak{p}$ of $K$ at which some specialization $E_{t_0}/K$ has 
inertia group $I = I_{t_0,\mathfrak{p}}$ of order $2$ and decomposition group $D=D_{t_0,\mathfrak{p}}$ containing a cyclic overgroup of $I$ of order $4$. Note that this readily forces $D$ to be cyclic since it centralizes a group $I$ with cyclic quotient.
In analogy to the proof of Lemma \ref{cyclic_dec_group2}, we aim at concluding that $\mathcal{R}\cap \mathcal{S}$ is finite.

Proposition \ref{mainlemma_kln} yields once again that for all but finitely many $\mathfrak{p}\in \mathcal{R}$, there must be a branch point $t_i$ and a prime $\mathfrak{p}'$ extending $\mathfrak{p}$ of relative degree $1$ in $K(t_i)$; furthermore this branch point $t_i$ either has ramification index strictly divisible by $2$ and residue extension $E(t_i)_{t_i}/K(t_i)$ of even local degree at $\mathfrak{p}'$; or it has ramification index divisible by $4$. 

We now claim that furthermore, for $\mathfrak{p}\in \mathcal{R}\cap \mathcal{S}$, each such $t_i$ must be such that the field $K(t_i)$ has a real embedding. Indeed, this is due to the fact that (due to Proposition \ref{beckmann} and up to exempting finitely many $\mathfrak{p}$) for the inertia group at $\mathfrak{p}$ in the specialization $E_{t_0}/K$ to be a non-trivial subgroup of some inertia group at $t_i$ in $E/K(t)$, it is necessary that $t_0$ and $t_i$ meet at $\fp$. This in term implies that the Frobenius of $\mathfrak{p}$ in the Galois closure of $K(t_i)/K$ is an element fixing at least one conjugate of $t_i$. But since for $\mathfrak{p}\in \mathcal{S}$, this Frobenius class is the class of complex conjugation, its elements have a fixed point if and only if $K(t_i)$ has a real embedding.

We may and will therefore assume $K(t_i)\subset \mathbb{R}$ whenever a particular branch point $t_i$ is chosen in the following.

For $t_i$ as above, let $M:=E(t_i)_{t_i}$ be the residue extension at $t_i$, and $L:=M\cap \mathbb{R}$. Note the inclusion $K\subseteq K(t_i)\subseteq L$. We now claim the following:

(*) For $\mathfrak{p}\in \mathcal{R}\cap \mathcal{S}$ (exempting finitely many), there exists at least one $t_i$ as above such that the extension $M/L$ embeds into a $C_4$-extension. 

Since this is trivial if $M=L$, we may assume $M = L(\sqrt{a})$(with a non-square $a\in L$) for the moment.
Let $\mathfrak{q}$ be a prime of $L$ extending $\mathfrak{p}'$ of degree $1$ (such a prime exists, due to the fact that the Frobenius at $\mathfrak{p}$ has the same cycle structure as complex conjugation; after all, $M\subset \mathbb{R}$ has a real embedding extending the real embedding of $K(t_i)$). In particular, we may identify $M_{\mathfrak{q}}/L_{\mathfrak{q}}$ with $M_{\mathfrak{p}'}/K(t_i)_{\mathfrak{p}'}$.

Now assume $I_{t_0,\fp}$ and $D_{t_0,\mathfrak{p}}$ are cyclic of order $2$ and divisible by $4$, respectively, which is possible by the choice of $\mathcal{R}$. Since \JK{$t_0$ and $t_i$ are assumed to meet at $\mathfrak{p}'$, in particular they} meet at $\mathfrak{q}$.
Consider thus the specialization $(EL)_{t_0}/L$ of $EL/L(t)$ at $t_0$. Since $M_{\mathfrak{q}}/L_{\mathfrak{q}}$ is quadratic, the residue degree at $\mathfrak{q}$ in this \JK{specialization is even}. Furthermore, the ramification index of $\mathfrak{q}$ in $(EL)_{t_0} =E_{t_0}\cdot L$ is the same as the one of $\mathfrak{p}$ in $E_{t_0}$, since we may without loss of generality assume that $\mathfrak{p}$ is unramified in $L/K$. 
But the decomposition group at $\mathfrak{q}$ in $(EL)_{t_0}/L$ is isomorphic to a subgroup of the one at $\mathfrak{p}$ in $E_{t_0}/K$, and therefore by the above must be cyclic of order \JK{divisible by} $4$. But Proposition \ref{mainlemma_kln}ii) identifies the decomposition group $D_{t_0,\mathfrak{q}}$ of $\mathfrak{q}$ in $(EL)_{t_0}/L$ with a subgroup \JK{$U$} of the decomposition group $D$ at $t\mapsto t_i$ in $EL/L(t)$ (namely, a subgroup mapping onto $D_{t_i,\mathfrak{q}}$ under the canonical epimorphism $D\to D/I$). 
Since the order of $D_{t_i,\mathfrak{q}}$ is in fact the full residue degree of $EL/L$ at $t_i$ (namely, $2$), this yields a subextension $\widehat{EL}_U$ of the completion $\widehat{EL} / L((t-t_i))$ such that $\widehat{EL}/\widehat{EL}_U$ has \JK{cyclic Galois group $U$ of order divisible by $4$}, $\widehat{EL}$ has residue field $M$ and $\widehat{EL}_U$ has residue field $L$. With a suitable parameter $s$ of the Laurent series field $\widehat{EL}_U$, this extension is then of the form $M((\sqrt[d]{bs})) \supset M((s)) \supset L((s))$ for some \JK{even integer $d$ and} $b\in M$. \JK{In particular, $M((s))/L((s)) =L((s))(\sqrt{a})/L((s))$ embeds into a $C_4$-extension, which} is possible if only if $a$ is a sum of two squares in \JK{$L((s))$} (cf.\ \cite[Proposition 16.5.1]{FJ}), and then  (via \JK{comparing the lowest coefficient\footnote{If the two squares happen to have a pole at $s=0$, this comparison yields a representation of $0$ as a sum of two squares in $L$, \DN{at least one of which is nonzero. Such a representation of $0$} is impossible since $L$ is real.}}) even in \JK{$L$}; hence $M/L$ also embeds into a $C_4$-extension.

But now note that if $M/L$ were a proper (and then automatically non-real) extension, it could not possibly embed into a $C_4$-extension, or otherwise complex conjugation would have to extend to an order-$4$ automorphism. Hence, $M=L$ is a {\it real} Galois extension of $K(t_i)$. Firstly, since the residue extension $E(t_i)_{t_i}$ at a branch point $t_i$ of ramification index $e$ always contains the $e$-th roots of unity, this already forces $e=2$. But secondly, $M=L$ implies that for any $\mathfrak{p}\in \mathcal{S}$, the above chosen prime $\mathfrak{p}'$ of $K(t_i)$, extending $\mathfrak{p}$, has an extension to $M$ of degree $1$,  due to our choice of Frobenius class at $\mathfrak{p}$ (as in the argument for choosing $\fp'$). However, due to $M/K(t_i)$ being Galois, this already forces $\mathfrak{p}'$ to be completely split in $M$.
%
%
Therefore, the residue extension at $\mathfrak{p}'$ in $E(t_i)_{t_i}/K(t_i)$ is trivial. Proposition \ref{mainlemma_kln} then implies that for any $t_0\in K$ meeting $t_i$ at $\mathfrak{p}$, the decomposition group at $\mathfrak{p}$ in $E_{t_0}/K$ is a subgroup of the inertia group at $t_i$, and thus of order dividing $e=2$. In particular, this decomposition group cannot contain a cyclic group of order $4$. 
\end{proof}

\subsection{Proof of Lemmas \ref{cyclic_dec_group2} and \ref{cyclic_dec_group3}. Part 2: The case $r=1$ and $F_1$ arbitrary.}
\label{sec:lemma_pf1}
\JK{We now extend the proof of Lemmas \ref{cyclic_dec_group2} and \ref{cyclic_dec_group3} to a single extension $E/F$ with arbitrary, not necessarily rational base fields $F$.} 
We will make use of the following technical lemma. \JK{For our purposes, we will only need conclusion i), but the proof easily  yields ii) as well, which may be of interest by itself.}
\begin{lemma}
\label{techn_lemma}
Let $K$ be field of characteristic $0$ \JK{with algebraic closure $\overline{K}$}, and $F$ a (one-variable) function field \JK{over} $K$. Let $\mathcal{R}$ be a finite set of places of $F\cdot \overline{K}$.
 Then there exists a non-constant function $\tau\in F$ such that both of the following hold:
\begin{itemize}
    \item[i)] The \JK{places} in $\mathcal{R}$ lie over pairwise distinct \JK{places} of \JK{$\overline{K}(\tau)$}, all of which are \JK{unramified in $F\cdot \overline{K}$}.
    \item[ii)] $F/K(\tau)$ is ``simply branched", i.e., every place \DN{of $\overline{K}(\tau)$ which ramifies in $F\cdot \overline{K}$ has a unique ramified place over it, and the latter has ramification index $2$.}
\end{itemize}
\end{lemma}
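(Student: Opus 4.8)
The plan is to realize $\tau$ as a sufficiently general linear projection of the curve underlying $F$ to $\mathbb{P}^1$, and then to descend the genericity to a $K$-rational (hence $F$-internal) choice using that $K$ is infinite. First I would reduce to the geometrically irreducible case: letting $\kappa$ be the algebraic closure of $K$ in $F$, any $\tau\in F$ produced over $\kappa$ is a fortiori in $F$, and Conditions i) and ii) are geometric (they refer only to $F\cdot\overline{K}$ and $\overline{K}(\tau)$, with $\overline{K}=\overline{\kappa}$); so I may replace $K$ by $\kappa$ and assume $F$ is regular over $K$. Let $C$ be its smooth projective model, so that the places of $F\cdot\overline{K}$ are the closed points of $C_{\overline{K}}$. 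Next I would enlarge $\mathcal{R}$ to its $\mathrm{Gal}(\overline{K}/K)$-orbit, which is still finite and only strengthens the conclusion; thus I may assume $\mathcal{R}$ is Galois-stable. Finally, fixing an effective $K$-rational divisor $D$ on $C$ of degree $\geq 2g+1$ (e.g.\ a large multiple of the polar divisor of a separating element), the complete linear system $|D|$ is very ample, and since $H^0(C_{\overline{K}},D)=H^0(C,D)\otimes_K\overline{K}$ it yields a geometrically nondegenerate $K$-embedding $C\hookrightarrow\mathbb{P}^r$ with $r=\deg D-g$.

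The candidate function is then $\tau=\pi_\Lambda$, the projection of $C$ from a linear center $\Lambda\cong\mathbb{P}^{r-2}$ cut out by two $K$-linear forms; such a $\tau$ lies in $F$ precisely when $\Lambda$ is $K$-rational. I would encode the conclusion as four conditions on $\Lambda\in\mathrm{Gr}(r-2,r)$, each of which fails only on a proper closed subset, using repeatedly the Schubert count that the set of $(r-2)$-planes meeting a fixed line of $\mathbb{P}^r$ has codimension $1$: (a) $\Lambda\cap C=\emptyset$, so that $\pi_\Lambda$ is a morphism (automatic off codimension $\geq 1$, since $\dim\Lambda+\dim C<r$); (b) $\pi_\Lambda$ is injective and unramified on $\mathcal{R}$, which fails only when $\Lambda$ meets one of the finitely many secant lines $\overline{PQ}$ or tangent lines $T_PC$ with $P,Q\in\mathcal{R}$; (c) no point of $\mathcal{R}$ maps to a branch value, i.e.\ $\Lambda$ avoids the incidence locus $\{\Lambda:\exists\,R\in C\text{ with }T_RC\cap\Lambda\neq\emptyset\text{ and }\overline{PR}\cap\Lambda\neq\emptyset\}$, whose image in $\mathrm{Gr}(r-2,r)$ has codimension $\geq 1$ because two codimension-one line-meeting conditions are imposed along the one-parameter family $R\in C$; and (d) $\pi_\Lambda$ is simply branched.

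The crux is Condition (d). Here I would invoke the classical fact that, in characteristic $0$, a general projection to $\mathbb{P}^1$ of a smooth curve embedded by a complete linear system of large degree is a simple branched cover; concretely, ramification index $\geq 3$ at a point $R$ forces the osculating $2$-plane at $R$ to meet $\Lambda$ in a line (a codimension-$2$ condition for each $R$, hence a proper closed subset as $R$ ranges over the one-parameter family $C$), while two distinct ramification points $R,R'$ sharing a branch value force $\Lambda$ to meet each of $T_RC$, $T_{R'}C$ and the secant $\overline{RR'}$ (three codimension-one conditions along the two-parameter family of pairs $(R,R')$, again proper). This genericity of simple branching, controlling hyperflexes and bitangent-type coincidences, is where the ampleness of $D$ enters and is the step I expect to require the most care; everything else is a dimension count.

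It remains to descend to a $K$-rational center. All four bad loci are defined over $K$: those in (a) and (d) are geometric, hence $\mathrm{Gal}(\overline{K}/K)$-invariant, and those in (b) and (c) are Galois-invariant because $\mathcal{R}$ was arranged to be Galois-stable. Their union is therefore a proper closed $K$-subvariety of the $K$-rational variety $\mathrm{Gr}(r-2,r)$. Since $K$ is infinite, $\mathrm{Gr}(r-2,r)(K)$ is Zariski-dense and meets the open complement, and any $K$-rational $\Lambda$ there yields a non-constant $\tau\in F$ satisfying i) and ii). The main obstacle is the simple-branching genericity in (d); the reductions and dimension counts in the remaining paragraphs are routine.
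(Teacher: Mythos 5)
Your argument is correct and rests on the same underlying idea as the paper's proof --- realize $\tau$ as a generic linear projection and use that $K$ is infinite to make the generic choice $K$-rationally --- but the concrete realization is genuinely different. The paper first maps the smooth model to a plane curve $\tilde{C}\subset\mathbb{P}^2$ with at most double points (arranging that the points of $\mathcal{R}$ land at smooth points), and then takes $\tau=y+\epsilon x$, i.e.\ projects from a point of $\mathbb{P}^2$; the bad locus is then a finite list of lines (tangents and secants through the marked points, lines through a marked point and a node, bitangents, tangents through nodes, lines of contact order $\ge 3$), hence a finite set of bad values of the single parameter $\epsilon\in K$, which makes the descent to a $K$-rational choice immediate. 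You instead embed the smooth model into $\mathbb{P}^r$ by a complete very ample system and project from a codimension-$2$ center $\Lambda$, converting the same incidence conditions into proper closed subsets of $\mathrm{Gr}(r-2,r)$ and concluding by Zariski-density of $K$-points in the Grassmannian. What your route buys is that you never need to produce the nodal plane model; what it costs is that the burden shifts to the Schubert-codimension counts, most delicately in your step (d): the three lines $T_RC$, $T_{R'}C$ and $\overline{RR'}$ are not in general position (the two tangents each meet the secant), so one must rule out concurrency or coplanarity along a positive-dimensional family of pairs $(R,R')$ for the count to give codimension $3$. This is precisely the classical simple-branching genericity you cite, and it is the exact analogue of the paper's appeal to the finiteness of bitangents and of lines of contact order $\ge 3$ for a nodal plane curve; both proofs ultimately lean on the same classical general-position facts in characteristic $0$, so your version is an acceptable, if slightly more technology-heavy, alternative.
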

\begin{proof}
\DN{Below we use standard facts from algebraic geometry, general references for which are \cite[\S 4.3]{Har},\cite[\S 1.3]{Rus}}.
\JK{Up to replacing $K$ by its algebraic closure in $F$, we may choose a smooth projective, geometrically irreducible curve $C$ over $K$} with function field $F$. Via a suitable projection $\varphi:C\to \tilde{C}\subseteq \mathbb{P}^2_{\overline{K}}$, we may choose a plane curve model of $C$ having at most double points as singularities, and such that all points $p\in C$ belonging to places in $\mathcal{R}$ map to non-singular points $\varphi(p)$ of $\tilde{C}$. If $\tilde{C}$ is the projective curve $f(X,Y,Z)=0$, we may assume $F\cong K(x,y)$, where $f(x,y,1)=0$. Consider now the projection $\pi:\tilde{C} \to \mathbb{P}^1$, $(x:y:z)\mapsto (y+\epsilon x:z)$ with $\epsilon \in K$. Setting $\tau=\JK{\tau(\epsilon):=}y+\epsilon x$, this yields an extension $F/K(\tau)$. Now note that each of the following is a finite set: \\
\noindent 1) The set of tangents to the curve $\tilde{C}$ passing through some point $\varphi(p)$ (with $p$ corresponding to a place in $\mathcal{R}$, as above); 2) the set of lines \JK{in $\mathbb{P}^2$} passing through some point $\varphi(p)$ as well as some singular point of $\tilde{C}$; 3) the set of lines passing through more than one $\varphi(p)$; 4) the set of ``bitangents" to $\varphi(C)$ (i.e., tangents at two different points); 5) the set of tangents to $\tilde{C}$ passing through a singular point of $\tilde{C}$; and 6) the set of lines intersecting $\tilde{C}$ with multiplicity $\ge 3$  \JK{at some point} (indeed, we have assumed that  $\tilde{C}$ has at most double points, whence the latter lines are only the tangent lines at a singularity or a flex). 

Therefore, up to choosing $\epsilon\in K$ appropriately, we may assume that none of the lines $Y+\epsilon X = aZ$, $a\in K$, \DN{in $\mathbb P^2$} are of the above form. But note that the intersection of any such line with $\tilde{C}$ is the fiber $\pi^{-1}(a)$.\footnote{We have ignored the line at infinity for convenience, but note that at any rate a suitable prior transformation on $\tau$ ensures that $\pi$ is unramified at $\tau\mapsto \infty$.}
The above 1)-6)  thus ensure that all $\varphi(p)$ lie in distinct fibers, each of these fibers has maximal cardinality $[F:K(\tau)]$; and any other fiber has (at most one double point, hence) cardinality at least $[F:K(\tau)]-1$. The former observation implies i), while the latter implies ii).
\end{proof}

Assume that $E/F$ is a $G$-extension of transcendence degree $1$-function fields over $K$. 
Let $\mathcal{R}$ be the set of \JK{places of $F\cdot \overline{K}$} ramifying in $E\cdot \overline{K}$.

By Lemma \ref{techn_lemma}, there exists a non-constant function $\tau\in F$ such that the \JK{places} in $\mathcal{R}$ extend pairwise distinct \JK{points $\tau\mapsto \tau_0\in \mathbb{P}^1(\overline{K})$}, and the latter are all unramified in $F/K(\tau)$.

\JK{Consider now the Galois closure $\Omega/K(\tau)$ of $E/K(\tau)$, and let $\Gamma$ be its Galois group.}
Let $t_0$ be a $K_\mathfrak{p}$-rational place of $F\cdot K_\mathfrak{p}$ and $\tau_0$ the underlying place of $K_\mathfrak{p}(\tau)$. Applying Proposition \ref{beckmann} to $\Omega/K(\tau)$ and to the Galois closure of $F/K(\tau)$, we obtain the following conclusion for all but finitely many primes $\mathfrak{p}$ of $K$, with the exceptional set depending on $\Omega$ but not on the particular choice of specialization point $\tau_0$. 

\JK{If $\mathfrak{p}$ ramifies in the specialization of $EK_\fp/FK_\fp$ at $t_0$, then the inertia group generator is a power of an inertia group generator at some branch point of $E/F$. By condition i) of Lemma \ref{techn_lemma}, the ramification index at this branch point is in fact the same as the ramification index at the underlying branch point of $\Omega/K(\tau)$; consequently, the ramification indices at $\mathfrak{p}$ in $(EK_\fp)_{t_0}/(FK_\fp)_{t_0}$ and in $(\Omega K_\fp)_{\tau_0}/K_\fp$ are identical.}

Furthermore the residue degree at $\mathfrak{p}$ in $(\Omega K_\mathfrak{p})_{\tau_0}/K_\mathfrak{p}$ is trivially a multiple of the residue degree in $(EK_\mathfrak{p})_{t_0}/(FK_\mathfrak{p})_{t_0}$. Now, for each of Lemma \ref{cyclic_dec_group2} and \ref{cyclic_dec_group3}, consider the set $\mathcal{S}$ of exceptional primes corresponding to the \JK{$\Gamma$-}extension $\Omega/K(\tau)$. It then follows immediately from the above that this set $\mathcal{S}$ also fulfills the respective assertion for the \JK{$G$}-extension $E/F$, so that the assertion follows from the case of a rational base field, treated in Section \ref{sec:lemma_pf2}.

\subsection{Proof of Lemmas \ref{cyclic_dec_group2} and \ref{cyclic_dec_group3}: General case} 
\DN{Finally we extend the proof of Lemmas \ref{cyclic_dec_group2} and \ref{cyclic_dec_group3} from one to finitely many extensions $E_i/F_i$}. This  only requires to note that, in the proofs in Section \ref{sec:lemma_pf2}, we used only that the set of all completions at branch points $t_i$ of $E/K(t)$ is a finite set, not actually the fact that those completions come from a common function field extension $E/K(t)$. This immediately yields the generalization to finitely many fields of the form $E_i/K(t)$, $i=1,\dots,r$; To get the conclusion for the most general case of finitely many arbitrary $G$-extensions $E_i/F_i$, we perform the reduction argument of Section \ref{sec:lemma_pf1} to each of them.

\section{Proof of the main results}
\label{sec:proofmain}
Here, we will prove our Main Theorem\JK{s \ref{thm:hgmain} and \ref{thm:qi}. In fact, we will prove the following stronger version of Theorem \ref{thm:hgmain}.
\begin{theorem}
\label{thm:hgmain_strong}
Let $G\ne \{1\}$ be a nontrivial finite group, and let $K\subset \mathbb{R}$ be a real number field such that the cyclotomic extensions $K(\zeta_p)$, with $p$ running through the prime numbers, are pairwise distinct. Then the following are equivalent:
\begin{itemize}
\item[i)] $\textrm{ld}_K(G)=1$.
\item[ii)] $\textrm{hgd}_K(G) = 1$.
\item[iii)] There exists a $K$-regular Galois extension $E/K(t)$ with group $G$ possessing the Hilbert--Grunwald property.
\item[iv)] $G$ is either a cyclic group of order $2$ or an odd prime power; or $G$ is a Frobenius group whose kernel and complement both are cyclic groups of order $2$ or an odd prime power.
\end{itemize}
\end{theorem}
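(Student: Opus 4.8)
The plan is to prove the equivalence as a cycle iii)$\Rightarrow$ii)$\Rightarrow$i)$\Rightarrow$iv)$\Rightarrow$iii), where the two arrows iii)$\Rightarrow$ii) and ii)$\Rightarrow$i) are essentially formal and the content lies in i)$\Rightarrow$iv) (assembling the three obstructions of Section \ref{sec:hgp}) and in iv)$\Rightarrow$iii) (the construction). For iii)$\Rightarrow$ii): a single $K$-regular transcendence-degree-$1$ extension with the Hilbert--Grunwald property witnesses $\textrm{hgd}_K(G)\le 1$; and since $G\ne\{1\}$ we cannot have $\textrm{hgd}_K(G)=0$, because a transcendence-degree-$0$ family consists of fixed number-field extensions, which are unramified at all but finitely many $\fp$ and hence cannot realize any ramified local extension of $K_\fp$. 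So $\textrm{hgd}_K(G)=1$. For ii)$\Rightarrow$i): one always has $\textrm{ld}_K(G)\le \textrm{hgd}_K(G)=1$, and the same unramifiedness remark gives $\textrm{ld}_K(G)\ge 1$ for $G\ne\{1\}$, whence $\textrm{ld}_K(G)=1$.

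For i)$\Rightarrow$iv), assume $\textrm{ld}_K(G)=1$, so finitely many $G$-extensions $E_i/F_i$ of transcendence degree $1$ realize, for all but finitely many $\fp$, every local extension $L/K_\fp$ with group embedding into $G$. The strategy is that each structural feature forbidden in iv) forces a local extension which embeds into $G$ and is realizable over $K_\fp$ for $\fp$ in a positive-density Chebotarev set, yet is excluded as a specialization for those very $\fp$ by one of the three obstructions, contradicting $\textrm{ld}_K(G)=1$. Concretely: (1) a non-cyclic abelian subgroup would contain $C_p\times C_p$, and for $\fp$ split in both $K(\zeta_p)$ and the field $L$ of Corollary \ref{cyclic_dec_group}, the field $K_\fp$ admits a $C_p\times C_p$-extension (since $\dim_{\mathbb{F}_p}K_\fp^\times/(K_\fp^\times)^p=2$ once $\zeta_p\in K_\fp$), whose decomposition group is non-cyclic, contradicting Corollary \ref{cyclic_dec_group}; hence all abelian subgroups are cyclic, so every Sylow subgroup is cyclic or (for $p=2$) generalized quaternion. (2) An element of order $4$ gives $C_4\le G$, and for $\fp\in\mathcal{S}$ as in Lemma \ref{cyclic_dec_group3} (odd residue characteristic) the assignment $\sigma\mapsto g,\ \tau\mapsto g^2$ in $C_4=\langle g\rangle$ satisfies $\sigma\tau\sigma^{-1}=\tau^{N(\fp)}$, producing a tame $C_4$-extension with inertia $C_2$ inside a cyclic decomposition group of order $4$, contradicting Lemma \ref{cyclic_dec_group3}; hence the Sylow $2$-subgroup has no element of order $4$, ruling out generalized quaternion and forcing it to be trivial or $C_2$. (3) An element of order $qr$ ($q,r$ distinct primes) gives $C_{qr}\le G$; by the pairwise distinctness of the $K(\zeta_p)$ at least one of $\zeta_q\notin K(\zeta_r)$ or $\zeta_r\notin K(\zeta_q)$ holds (both failing would give $K(\zeta_q)=K(\zeta_r)$), so after possibly swapping $q,r$ Lemma \ref{cyclic_dec_group2} applies, while for $\fp$ split in $K(\zeta_r)$ (so $N(\fp)\equiv 1 \bmod r$) one builds the tame $C_{qr}$-extension with inertia $C_r$ and residue degree $q$, contradicting Lemma \ref{cyclic_dec_group2}. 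Thus every element of $G$ has prime-power order.

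From these facts the structure in iv) follows by standard group theory: all Sylow subgroups being cyclic makes $G$ a Z-group, hence metacyclic of the form $C_m\rtimes C_n$ with $\gcd(m,n)=1$; the prime-power-order condition forces both $m$ and $n$ to be prime powers (a cyclic group of order divisible by two distinct primes has a composite-order element) and, via the criterion $C_G(k)\cap H=\{1\}$ recalled in the Remark after Theorem \ref{thm:hgmain}, forces the action to be fixed-point-free, so that $G$ is Frobenius with cyclic kernel $C_m$ and complement $C_n$ of coprime prime-power orders (or cyclic when $n=1$); finally, the Sylow $2$-subgroup being trivial or $C_2$ forces any even factor among $m,n$ to equal $2$. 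This is exactly iv). For the remaining implication iv)$\Rightarrow$iii), the substantive construction, one realizes all admissible local behaviors by a single $K$-regular $G$-extension $E/K(t)$: by Lemma \ref{lem:pv} it suffices to work one prime at a time, and one builds $E/K(t)$ with branch points carrying the cyclic inertia groups needed to realize every tame decomposition group $D\hookrightarrow G$, using the existence of generic extensions \cite{S82} together with the realization criterion of Proposition \ref{thm:KLN2} to match residue extensions and Frobenius actions; this is carried out in Theorem \ref{thm:cyclic_odd}.

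I expect the main obstacle to be precisely this construction iv)$\Rightarrow$iii): producing \emph{one} transcendence-degree-$1$ extension whose specializations cover every local extension with group embedding into $G$ forces a careful arrangement of branch points so that Proposition \ref{thm:KLN2} yields all the required pairs (inertia group, residue extension) simultaneously, which is where the generic-extension input becomes essential. Within i)$\Rightarrow$iv) the only delicate point is to verify, for each forbidden feature, that the offending local extension is genuinely realizable over $K_\fp$ at the Chebotarev primes produced by the obstruction lemmas (so that $\textrm{ld}_K(G)=1$ is really contradicted); this is elementary, the key being that the splitting and root-of-unity conditions appearing in Lemmas \ref{cyclic_dec_group2} and \ref{cyclic_dec_group3} are exactly those under which the relevant tame extension of $K_\fp$ can be built.
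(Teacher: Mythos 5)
Your proposal is correct and follows essentially the same route as the paper: iii)$\Rightarrow$ii)$\Rightarrow$i) are formal, i)$\Rightarrow$iv) is assembled from Corollary \ref{cyclic_dec_group} and Lemmas \ref{cyclic_dec_group2} and \ref{cyclic_dec_group3} exactly as in Theorem \ref{thm:hgp} (including the swap of $q$ and $r$ permitted by the pairwise-distinctness hypothesis and the Zassenhaus classification of Sylow-cyclic groups), and iv)$\Rightarrow$iii) is the construction of Theorem \ref{thm:cyclic_odd}. Your extra verifications that the offending local extensions genuinely exist over $K_\fp$ at the relevant Chebotarev primes agree with the paper's (briefer) justifications.
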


Note that $\textrm{ld}_K(G)\le 2$ was shown in \cite[Main Theorem]{KN} for all groups $G$ and number fields $K$, whence Theorem \ref{thm:hgmain_strong} completely determines the local dimension over fields $K$ as above. 
While the implications iii)$\Rightarrow$ii)$\Rightarrow$i) are trivial from the definitions, the remaining implications i)$\Rightarrow$iv) and iv)$\Rightarrow$iii) will be shown below} as Theorems \ref{thm:hgp} and \ref{thm:cyclic_odd}. 

Firstly, we use the lemmas of the previous section to obtain a \JK{necessary} condition for a group to be of local dimension $1$ over   
 certain real number fields $K\subset \mathbb{R}$. 
\begin{theorem}
\label{thm:hgp}
Let $K\subset \mathbb{R}$ be a real number field for which the extensions $K(\zeta_p)$ (with $p$ prime) are pairwise distinct, and let $G$ be a finite group such that $\textrm{ld}_K(G)=1$.
Then $G \cong C_{P}\rtimes C_Q$, where $P$ and $Q$ are coprime, each either $\le 2$ or an odd prime power, and $C_Q$ acts on $C_P$ as a subgroup of $\Aut(C_P)$.\footnote{In particular, for each order there is at most one group with this property, due to $\Aut(C_P)$ being cyclic.} 
%
%
\end{theorem}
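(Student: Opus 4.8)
The plan is to show that the hypothesis $\textrm{ld}_K(G)=1$, via the three obstructions of Section \ref{sec:hgp}, forces $G$ to be a Sylow-cyclic group of a very restricted shape, and then to invoke the classification of such groups. The starting point is the observation that $\textrm{ld}_K(G)=1$ means there exist finitely many transcendence degree $1$ extensions $E_i/F_i$ whose specializations realize, at almost all primes $\fp$, \emph{every} local extension $L/K_\fp$ with group embedding into $G$. So for every subgroup $U\le G$ and every ``local shape'' that $U$ can take as a decomposition-group/inertia-group pair over some $K_\fp$, that shape must already appear among the specializations $(E_i\cdot K_\fp)_{t_0}/K_\fp$ at that $\fp$. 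The strategy is then contrapositive: each of Corollary \ref{cyclic_dec_group} and Lemmas \ref{cyclic_dec_group2} and \ref{cyclic_dec_group3} exhibits a Chebotarev (hence infinite) set of primes $\fp$ at which a \emph{particular} local shape is provably \underline{absent} from all the $(E_i\cdot K_\fp)_{t_0}/K_\fp$; if $G$ admits a subgroup capable of producing that shape over such $K_\fp$, we get a contradiction.

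Concretely, I would run the three obstructions in turn to extract three structural constraints on $G$. First, Corollary \ref{cyclic_dec_group} gives a set $\mathcal{S}$ of primes (those splitting completely in some $L\supseteq K$) at which every specialization has \emph{cyclic} decomposition group. If $G$ had a non-cyclic abelian subgroup $A$, then over $K_\fp$ with $\fp\in\mathcal{S}$ (chosen so that $\zeta_{\exp(A)}\in K_\fp$) one could realize an unramified extension with decomposition group $A$, contradicting cyclicity; hence every decomposition group, and in particular every subgroup that can arise as a decomposition group, is cyclic. Since over suitable $K_\fp$ one can realize any subgroup of $G$ as a decomposition group of an unramified local extension, this forces \emph{every abelian subgroup of $G$ to be cyclic}, i.e.\ $G$ is Sylow-cyclic (every Sylow subgroup cyclic). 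Next, Lemma \ref{cyclic_dec_group2}, applied with distinct primes $q,r$ dividing $|G|$, rules out (on a nonempty Chebotarev set, using the hypothesis that the $K(\zeta_p)$ are pairwise distinct so that $K(\zeta_r)\not\ni\zeta_q$) any specialization that is tamely ramified with ramification index divisible by $r$ but coprime to $q$ and residue degree divisible by $q$. Over an appropriate $K_\fp$ such a local extension exists whenever $G$ contains an element of order divisible by two distinct primes, i.e.\ a non-prime-power element; so this obstruction forces \emph{every element of $G$ to have prime-power order}. Finally, Lemma \ref{cyclic_dec_group3} (using $K\subset\mathbb{R}$) forbids, on a Chebotarev set, any specialization with inertia of order $2$ sitting inside a cyclic order-$4$ subgroup of the decomposition group; realizing such a shape over a suitable real-completion-compatible $K_\fp$ is possible whenever $G$ has an element of order $4$, so this forces $G$ to have \emph{no element of order $4$}, i.e.\ its Sylow $2$-subgroup has exponent $\le 2$ and (being cyclic) is therefore of order $\le 2$.

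The next step is to feed these three constraints into the structure theory of Sylow-cyclic (a.k.a.\ $Z$-groups or metacyclic $Z$-groups) finite groups. A finite group all of whose Sylow subgroups are cyclic is a \emph{Z-group}, and by the well-known classification (Hölder--Burnside--Zassenhaus) every such group is metacyclic of the form $C_m\rtimes C_n$ with $\gcd(m,n)=1$ and the action faithful on each prime-order subgroup; in particular it is a split extension of two cyclic groups of coprime order. Combining this with the constraint that every element has prime-power order forces both the ``kernel'' and the ``complement'' to be cyclic of prime-power order (a mixed-prime cyclic factor would immediately produce a non-prime-power element), and the order-$4$ constraint forces any factor whose prime is $2$ to have order exactly $2$ rather than a higher $2$-power. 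This yields exactly $G\cong C_P\rtimes C_Q$ with $P,Q$ coprime, each of order $\le 2$ or an odd prime power, and $C_Q\hookrightarrow\Aut(C_P)$, matching the Frobenius-group description via the Remark following Theorem \ref{thm:hgmain}.

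The main obstacle I anticipate is not the group theory in the last paragraph, which is essentially bookkeeping on top of the standard Z-group classification, but rather the \emph{realizability} direction of each obstruction argument: to derive a contradiction I must know that the forbidden local shape genuinely occurs as a decomposition/inertia datum of \emph{some} Galois extension of \emph{some} completion $K_\fp$ for $\fp$ ranging in the relevant Chebotarev set. This requires matching the group-theoretic data (a non-cyclic abelian subgroup; an element of composite order; an element of order $4$) to the description of local Galois groups $G^{tr}_\fp$ recalled in Section \ref{sec:local}, and in particular verifying that the cyclotomic conditions $\zeta_e\in K_\fp$ needed to realize a given ramification/residue structure are compatible with membership of $\fp$ in the Chebotarev set supplied by each lemma. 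The assumption that the $K(\zeta_p)$ are pairwise distinct is exactly what guarantees these compatibilities can be arranged simultaneously, and checking this compatibility prime-by-prime (using Lemma \ref{lem:pv} to pass from the local statement back to the global specialization, and the abelianization $\mu_{q-1}\times\hat{\mathbb{Z}}$ of $G^{tr}_\fp$ to realize the desired local extensions) is where the care is needed.
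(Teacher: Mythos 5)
Your proposal follows essentially the same route as the paper: run the three obstructions to conclude (a) no non-cyclic abelian subgroup, (b) no element whose order is divisible by two distinct primes, (c) no element of order $4$, and then feed these into the Zassenhaus classification of groups with all Sylow subgroups cyclic to land on $C_P\rtimes C_Q$ with the stated constraints. One step, however, is stated incorrectly: you claim that Corollary \ref{cyclic_dec_group} (no non-cyclic abelian subgroup) already forces $G$ to be Sylow-cyclic. That inference is false for $p=2$: the generalized quaternion groups have every abelian subgroup cyclic but are not themselves cyclic, and the correct conclusion (as the paper draws, citing Cartan--Eilenberg) is only that each Sylow subgroup is cyclic \emph{or generalized quaternion}. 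Your argument is rescued because you separately invoke Lemma \ref{cyclic_dec_group3} to exclude elements of order $4$, and every generalized quaternion group contains such an element; this is exactly the role that lemma plays in the paper's proof (eliminating the quaternion alternative, not merely bounding the exponent of an already-cyclic $2$-Sylow). With that correction the logical chain closes, and the remaining realizability checks you flag (existence of a $(C_q\times C_q)$-unramified extension when $\fp$ splits in $K(\zeta_q)$, of a totally ramified $C_r$ times unramified $C_q$ extension for $\fp$ in the set of Lemma \ref{cyclic_dec_group2}, and of a $C_4$-extension with ramification index $2$ over every $K_\fp$ of odd residue characteristic) are handled in the paper exactly as you anticipate, via the description of $G^{tr}_\fp$ in Section \ref{sec:local}.
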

\begin{proof}
Due to Corollary \ref{cyclic_dec_group}, $G$ cannot have a non-cyclic abelian subgroup. Indeed, if it did, then it would also contain a subgroup $C_q\times C_q$ for some prime $q$. Letting $L$ be  as in Corollary \ref{cyclic_dec_group},   the complete field $K_\fp$ possesses a $(C_q\times C_q)$-extension for all primes $\fp$ of $K$ splitting completely in $K(\zeta_q)$, whereas such an extension cannot occur as a $K_\fp$-specialization of the prescribed finitely many $G$-extensions $E_i/F_i$, $i=1,\dots, r$, for all primes $\fp$ split in $L(\zeta_q)$. This point was already noted in \cite[Theorem 6.3]{KLN}, and forces all Sylow subgroups of $G$ to be either cyclic or a generalized quaternion group (see \cite[Chapter XII, Theorem 11.6]{CE}).

 But $G$ also cannot have an element of order $4$ due to Lemma \ref{cyclic_dec_group3}. Indeed, every field $K_\fp$ has a $C_4$-extension of ramification index $2$, whence Lemma \ref{cyclic_dec_group3} yields an obstruction (over real fields $K$) for all groups $G$ containing a cyclic subgroup of order $4$, implying in particular that all Sylow subgroups of $G$ must be cyclic. Then it is known that $G=B.A$ is an extension of a cyclic group $A$ by a cyclic group $B$ of order coprime to that of $A$  (see, e.g., \cite[Chapter V, Theorem 11]{Zass}). 
 Finally, all elements of $G$ have prime power order. Indeed, the set $\mathcal{S}$ in Lemma \ref{cyclic_dec_group2} is chosen such that  $K_\fp$ has a totally ramified $C_r$-extension for every $\fp\in \mathcal{S}$ (and of course also an unramified $C_q$-extension), yielding a $C_{qr}$-extension which is not reached via specialization of the prescribed  finitely many extensions $E_i/F_i$, $i=1,\dots, r$. Therefore $B$ and $A$ above must be cyclic of order $2$ or odd prime power order. 
 Since $|A|$ and $|B|$ are coprime, $G=B\rtimes A$, and the induced homomorphism $A\to \Aut(B)$ must be injective since otherwise there would be non-trivial subgroups of $A$ and $B$ commuting, yielding an element whose order is not a prime-power. This concludes the proof.
 \end{proof}

 \begin{remark}
\label{rem:hilbertgrunwald}
It was shown in \cite{DG} that the set of specializations of a single $K$-regular $G$-extension $E/K(t)$ {\it always} provides a positive answer to all {\it unramified} Grunwald problems (outside some finite set $S_0$) - i.e., where $L^{(\fp)}/K_\fp$ is unramified for all $\fp\in S$. Theorem \ref{thm:hgp} and the preceding lemmas show in particular that this is no longer true as one passes from unramified to ramified Grunwald problems even when the ramified local extensions are chosen cyclic. 
\end{remark}

We note that a similar proof gives the forward direction of Theorem \ref{thm:qi}:
\begin{proof}[Proof of Theorem \ref{thm:qi}, forward direction]
As in the proof of Theorem 
\ref{thm:hgp}, Corollary \ref{cyclic_dec_group} implies that the Sylow subgroups of $G$ are either cyclic or a generalized quaternion group, and Lemma \ref{cyclic_dec_group2} implies that all elements have prime power order. 

If the $2$-Sylow subgroups of $G$ are cyclic, then all Sylow subgroups are cyclic, and $G=B.A$ is an extension of a cyclic group $B$ by a cyclic group $A$ of order coprime to that of $B$ by \cite[Chapter V, Theorem 11]{Zass}. Furthermore, as all elements are of prime power order, $A$ and $B$ are of prime power order and the action of $B$ on $A$ is faithful, so that $B$ embeds into $\Aut(A)$. 

Assume the $2$-Sylow subgroups of $G$ are generalized quaternion groups. 
\JK{Then, as a consequence of the Suzuki-Zassenhaus theorem (see \cite{AM}, Theorem 6.15 together with Remark 6.16) $G$ has a unique, and hence central, involution $z$. By multiplying by $z$, one obtains an element of non-prime-power order, unless $G$ itself is a $2$-group. In the latter case, $G$ itself is therefore a generalized quaternion group.} 
\end{proof}

We now aim at the converse of Theorem \ref{thm:hgp} by showing that groups of the form $G\cong C_P\rtimes C_Q$ as above do in fact admit $K$-regular $G$-extensions with the Hilbert--Grunwald property.
\begin{theorem}
\label{thm:cyclic_odd}
Let $K$ be a number field, and $G =C_{P}\rtimes C_Q$, where $P$ and $Q$ are coprime, each either $\le 2$ or an odd prime power, and $C_Q$ acts on $C_P$ \DN{faithfully}.
%
Then there exists a $K$-regular $G$-extension ${E}/K(t)$ with 
the Hilbert--Grunwald property.
\end{theorem}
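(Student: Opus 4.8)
The plan is to reduce the statement, via Lemma~\ref{lem:pv}, to a purely local assertion and then to realize the relevant local extensions as specializations using Proposition~\ref{thm:KLN2}. By the last sentence of Lemma~\ref{lem:pv}, it suffices to construct a $K$-regular $G$-extension $E/K(t)$ with the property that, for all but finitely many primes $\fp$ of $K$, every Galois extension $L/K_\fp$ whose group embeds into $G$ lies in $Sp(E\cdot K_\fp/K_\fp(t))$. The first simplification I would make is to discard the finitely many primes $\fp$ lying above the two rational primes $p,q$ dividing $|G|=PQ$ (placing them in the exceptional set $\mathcal{S}_0$). For every remaining $\fp$ the residue characteristic is coprime to $|G|$, so any $L/K_\fp$ with $\Gal(L/K_\fp)\hookrightarrow G$ is \emph{tamely} ramified. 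A short analysis of the subgroups of $G$ then shows that the pairs (Galois group, inertia group) occurring locally are exactly: a cyclic group $C_{p^i}$ (resp.\ $C_{q^j}$) with cyclic inertia $C_{p^{i'}}$ (resp.\ $C_{q^{j'}}$), realizable over $K_\fp$ precisely when $\zeta_{p^{i'}}\in K_\fp$ (resp.\ $\zeta_{q^{j'}}\in K_\fp$); and a Frobenius subgroup $C_{p^i}\rtimes C_{q^j}$ with inertia equal to the kernel $C_{p^i}$ and unramified complement $C_{q^j}$, which is realizable precisely when the Frobenius $q_\fp=N(\fp)$ acts on $C_{p^i}$ as a generator of the image of $C_{q^j}$ in $\Aut(C_{p^i})$.

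Next I would construct $E/K(t)$. The key input is the existence of \emph{generic} extensions for the cyclic groups $C_P$ and $C_Q$ over $K$; this is available exactly because each of $P,Q$ is either $\le 2$ or an odd prime power, which avoids the cyclic $2$-groups $C_{2^k}$ ($k\ge 3$) that admit no generic extension over fields such as $\qq$ (the Grunwald obstruction at the prime $2$), cf.\ \cite{S82}. I would use these to assemble a $K$-regular $G$-extension possessing a supply of branch points whose inertia groups run through the kernel $C_P$ and its subgroups $C_{p^{i'}}$, with decomposition groups and residue extensions encoding the semidirect action: at a branch point of inertia $C_P$ and decomposition $G$, the residue extension is chosen so that the Frobenius of $\fp$ in it reflects the action of $q_\fp\bmod P$ on $C_P$. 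Proposition~\ref{thm:KLN2} then converts each branch point into realized local extensions: a branch point with inertia $I$ yields, at any admissible $\fp$ having a degree-$1$ prime $\fp'$ in $K(t_1)$, every $D'$-extension with prescribed $I$-fixed (unramified) part, where $D'=\varphi^{-1}(\mathrm{Frob}_{\fp'})$ in the notation of Proposition~\ref{mainlemma_kln}. Matching $I$ to the kernel and the unramified part to an unramified $C_{q^j}$ produces the Frobenius type, while matching $I=C_{p^{i'}}$ and the unramified part to $C_{p^{i-i'}}$ produces the cyclic types; the purely unramified extensions are handled as in \cite{DG}.

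The hard part will be to realize the \emph{mixed} cyclic extensions --- those with both nontrivial inertia $C_{p^{i'}}$ and a nontrivial unramified part $C_{p^{i-i'}}$ --- uniformly over all admissible primes $\fp$. The difficulty is that in Proposition~\ref{thm:KLN2} the unramified part of the realized extension is pinned to $N\cdot K_\fp$, the completion of the (fixed) residue extension at the branch point; since finitely many \emph{constant} residue extensions always leave a positive-density set of primes split in all of them, no finite collection of branch points with constant residue data can supply the required unramified part at every $\fp$. I would resolve this by taking branch points $t_1$ defined over \emph{large} number fields $K(t_1)$ --- made available by the generic extensions --- and exploiting the freedom among the many degree-$1$ primes $\fp'$ of $K(t_1)$ lying over a given $\fp$: a Chebotarev argument in the Galois closure of the residue extensions then produces, for every $\fp$ carrying the requisite roots of unity, a prime $\fp'$ whose Frobenius realizes the prescribed unramified part. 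Here the coprimality of $P$ and $Q$ and the faithfulness of the $C_Q$-action \emph{on the order-$p$ subgroup} of $C_P$ are essential: they guarantee that the cyclotomic and action conditions defining ``realizable over $K_\fp$'' coincide exactly with the Frobenius conditions the construction can meet, and that the branch data can be prescribed while keeping $E/K(t)$ genuinely $K$-regular. Once every local extension is exhibited as a local specialization, Lemma~\ref{lem:pv} assembles these into a single global specialization, yielding the Hilbert--Grunwald property.
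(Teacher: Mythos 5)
Your skeleton matches the paper's: reduce to one prime at a time via Lemma~\ref{lem:pv}, discard primes dividing $|G|$ so everything is tame, classify the local Galois/inertia pairs into cyclic and Frobenius types, build $E/K(t)$ with prescribed completions via Saltman's theorem (note: the paper uses a generic extension for the \emph{whole} group $G=C_P\rtimes C_Q$, via \cite[Theorem 7.2.2]{JLY} and \cite[Theorem 5.9]{S82}, not just for $C_P$ and $C_Q$ separately), and convert branch points into realized local extensions with Proposition~\ref{thm:KLN2}. You also correctly locate the crux in the mixed cyclic extensions, those with nontrivial inertia $C_{p^{i'}}$ and nontrivial unramified part. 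But your resolution of that crux rests on a false premise and is itself gappy. The premise ``no finite collection of branch points with constant residue data can supply the required unramified part at every $\fp$'' is wrong: the required unramified part of a tame $C_d$-extension of $K_\fp$ is itself cyclotomic (the unique unramified extension of each degree is generated by roots of unity), and which ramification indices $e$ are admissible at $\fp$ is governed by exactly the same data, namely $\mu_d\cap K_\fp$. The paper exploits this: it partitions the admissible primes into the sets $\mathcal S_{e'}^{(d)}=\{\fp: \mu_d\cap K_\fp=\mu_{e'}\}$ and, for each pair $(e',d)$, installs one branch point whose completion is the explicit extension of Lemma~\ref{lem:construct1} (splitting field of $X^d-\zeta_{e'}y^{d/e'}$ over $K(\zeta_{e'})((y))$), with inertia $C_{e'}$ and residue extension $K(\zeta_d)/K(\zeta_{e'})$. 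For $\fp\in\mathcal S_{e'}^{(d)}$ the Frobenius at $\fp$ automatically has order exactly $d/e'$ in this residue extension, so Proposition~\ref{thm:KLN2} delivers \emph{all} $C_d$-extensions of $K_\fp$ of ramification index dividing $e'$ at once --- constant (cyclotomic) residue data suffices.

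By contrast, your proposed fix --- branch points over ``large'' fields $K(t_1)$ plus a Chebotarev argument selecting, for \emph{every} admissible $\fp$, a degree-$1$ prime $\fp'$ over it with prescribed Frobenius in the residue extension --- does not go through. Chebotarev controls the density of primes $\fp$ of $K$ with a given Frobenius class in the Galois closure of the residue data; but once $\fp$ is fixed, the Frobenii at the various primes $\fp'$ of $K(t_1)$ above it are all determined by that single class, so there is no freedom to ``realize the prescribed unramified part'' for each $\fp$ separately. Unless the residue extension at the branch point is chosen to be correlated with the cyclotomic behaviour of $\fp$ (as in the paper), a positive-density set of primes will fail. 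You would also need to say more about the Frobenius-type local extensions $C_{p^i}\rtimes C_{q^j}$ (the paper imports the relevant local construction and specialization statement from \cite[Corollaries 3.6 and 3.11]{KN}), but that is a secondary omission; the essential missing idea is the cyclotomic choice of residue extensions together with the partition by $\mu_d\cap K_\fp$, and the explicit local models of Lemma~\ref{lem:construct1} (including the verification that they are cyclic), which your argument does not supply.
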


The following lemma constructs  cyclic extensions of certain complete fields, which will then serve as completions (at suitable branch points) of extensions $E/K(t)$ in the proof of Theorem \ref{thm:cyclic_odd}.    Let $v_2:\mathbb Q^\times\ra \mathbb Z$ denote the normalized $2$-adic valuation. 
\begin{lemma}\label{lem:construct1}
Suppose $e\,|\,d$ are positive integers with the same prime divisors and either  $v_2(d)=v_2(e)\leq 1$ or $v_2(e)\geq 2$. Suppose further that $\mu_d\cap K(\zeta_e) = \mu_e$, and let
 $E=E^{(e,d)}$ be the splitting field of  $P(X) = X^{d} - \zeta_{e} y^{d/e} \in F[X]$ over  $F=F^{(e,d)}:=K(\zeta_{e})((y))$. Then $E/F$ is a $C_d$-extension with ramification index $e$ and residue extension $K(\zeta_{d})/K(\zeta_{e})$.
\end{lemma}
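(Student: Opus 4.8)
The plan is to make the extension completely explicit by realizing $E$ as a Laurent series field and reading off all invariants from a single well-chosen root of $P$. Write $m:=d/e$, and choose the primitive $d$-th root $\zeta_d$ so that $\zeta_d^{m}=\zeta_e$ (any primitive $d$-th root has its $m$-th power a primitive $e$-th root, and the field $K(\zeta_d)$ is independent of this choice). The roots of $P$ are $\zeta_d^{j}\pi_0$ ($0\le j<d$) for a fixed root $\pi_0$, and one computes that each $(\zeta_d^{j}\pi_0)^{e}/y=\zeta_d^{je}(\pi_0^e/y)$ is an $m$-th root of $\zeta_e$; as $j$ varies these run through \emph{all} $m$-th roots of $\zeta_e$, which include $\zeta_d$ itself. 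Hence $\zeta_d\in E$, and fixing the root $\pi$ with $\pi^{e}=\zeta_d\,y$ gives $E=F(\pi)=K(\zeta_d)((y))(\pi)$. Setting $s:=\pi$ and $y=\zeta_d^{-1}s^{e}$ (so $y$ has $s$-adic valuation $e$) identifies $E$ with the Laurent series field $K(\zeta_d)((s))$, inside which $F=K(\zeta_e)((y))$ sits. This already exhibits $E/F$ as a complete discretely valued extension with uniformizer $s$, ramification index $e$, and residue extension $K(\zeta_d)/K(\zeta_e)$; in particular $[E:F]=e\cdot[K(\zeta_d):K(\zeta_e)]$, and $E/F$ is Galois as the splitting field of the separable polynomial $P$.

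Next I would pin down the residual degree. The cyclotomic character identifies $\Gal(\mathbb{Q}(\zeta_d)/\mathbb{Q}(\zeta_e))$ with $H:=1+e\mathbb{Z}/d\mathbb{Z}\le(\mathbb{Z}/d)^\times$, of order $m$ (using that $d$ and $e$ have the same prime divisors, so every $1+ek$ is a unit). I claim that under the hypotheses $H$ is cyclic: by the Chinese Remainder Theorem $H\cong\prod_{p}(1+p^{v_p(e)}\mathbb{Z}/p^{v_p(d)}\mathbb{Z})$, a product of $p$-groups of coprime orders, each factor cyclic except when $p=2$, $v_2(e)=1$, $v_2(d)\ge2$ — exactly the case excluded by ``$v_2(d)=v_2(e)\le1$ or $v_2(e)\ge2$''. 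With $H$ cyclic the subfields of $\mathbb{Q}(\zeta_d)/\mathbb{Q}(\zeta_e)$ form a chain, whose minimal nontrivial member over $\mathbb{Q}(\zeta_e)$ is $\mathbb{Q}(\zeta_{\ell e})$ for a suitable prime $\ell\mid m$; since $\ell e\mid d$ and $\ell e>e$, the assumption $\mu_d\cap K(\zeta_e)=\mu_e$ forbids $\zeta_{\ell e}$ from lying in $K(\zeta_e)$. Hence $K(\zeta_e)\cap\mathbb{Q}(\zeta_d)=\mathbb{Q}(\zeta_e)$, so $[K(\zeta_d):K(\zeta_e)]=[\mathbb{Q}(\zeta_d):\mathbb{Q}(\zeta_e)]=m$, giving $[E:F]=em=d$; in particular $P$ is irreducible.

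It then remains to show that $\Gal(E/F)$ is cyclic. As $P$ is irreducible, $\Gal(E/F)=\{\sigma_j:j\in\mathbb{Z}/d\}$ with $\sigma_j(\pi)=\zeta_d^{j}\pi$; using $\zeta_d=\pi^{e}/y$ one gets $\sigma_j(\zeta_d)=\zeta_d^{1+je}$ and the group law $\sigma_j\sigma_{j'}=\sigma_{j+j'+e\,jj'}$. Thus $\Gal(E/F)$ is abelian, isomorphic to $(\mathbb{Z}/d,\ast)$ with $j\ast j'=j+j'+ejj'$, and $\Phi(j)=1+ej$ is a homomorphism to $(\mathbb{Z}/d)^\times$. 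By CRT it suffices to treat $d=p^{k}$: here the order of the generator $1$ in $(\mathbb{Z}/p^{k},\ast)$ equals the multiplicative order of $1+up^{v_p(e)}$ (with $u$ a unit) modulo $p^{k+v_p(e)}$, via the isomorphism $(\mathbb{Z}_p,\ast)\cong1+p^{v_p(e)}\mathbb{Z}_p$. This order equals $p^{k}$ precisely when $p$ is odd, or $v_2(e)\ge2$, or $v_2(e)=v_2(d)\le1$ — once more exactly the hypothesis — so each $p$-part, and hence $\Gal(E/F)$, is cyclic, i.e. $E/F$ is a $C_d$-extension.

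The main obstacle is the role of the $2$-adic hypothesis, which enters twice for the same arithmetic reason: it forces both $H=1+e\mathbb{Z}/d\mathbb{Z}$ to be cyclic (needed so that the residue degree is $m$, i.e.\ $[E:F]=d$) and the Galois group $(\mathbb{Z}/d,\ast)$ to be cyclic rather than $C_2\times C_{2^{k-1}}$. The excluded configuration $v_2(e)=1<v_2(d)$ is genuinely obstructive: for $K=\mathbb{Q}$, $e=2$, $d=4$ one checks that $X^4+y^2$ yields a $(C_2)^2$-extension, not a $C_4$-extension. I would therefore isolate ``$1+e\mathbb{Z}/d\mathbb{Z}$ and $(\mathbb{Z}/d,\ast)$ are cyclic of orders $m$ and $d$'' as a purely arithmetic sublemma, proved once by the prime-power computation above.
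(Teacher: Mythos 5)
Your proof is correct. Its first half follows the same route as the paper: an explicit root (your $\pi$ with $\pi^e=\zeta_d y$, the paper's $x=(\zeta_e y^{d/e})^{1/d}$), the containment $K(\zeta_d)((y))\subseteq E$, and the two-step degree estimate $[E:F]\ge e\cdot[K(\zeta_d):K(\zeta_e)]=e\cdot (d/e)=d$, which simultaneously yields irreducibility, the ramification index and the residue field; your normalization $\pi^e=\zeta_d y$ and your explicit verification that $[K(\zeta_d):K(\zeta_e)]=d/e$ (via $K(\zeta_e)\cap\mathbb{Q}(\zeta_d)=\mathbb{Q}(\zeta_e)$, which the paper asserts tersely) are welcome refinements. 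Where you genuinely diverge is the cyclicity of $\Gal(E/F)$. Both proofs reduce to prime-power $d$, but the paper then embeds $F(x)$ into the auxiliary compositum $F(\zeta_e^{1/d},y^{1/e})$ with group $\langle\sigma\rangle\times\langle\tau\rangle\cong C_e\times C_d$ and exhibits $\Gal(F(x)/F)$ as the cyclic quotient by $\langle\sigma\tau^{-d/e}\rangle$, whereas you compute the group law on the roots directly, obtaining the twisted addition $j\ast j'=j+j'+ejj'$ on $\mathbb{Z}/d$ and identifying its $p$-part with $(1+p^{a}\mathbb{Z}_p)/(1+p^{a+k}\mathbb{Z}_p)$, $a=v_p(e)$, so that cyclicity becomes the standard computation of the order of $1+up^{a}$ modulo $p^{a+k}$. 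The two arguments run into the same $2$-adic dichotomy, but yours is more self-contained: it requires no auxiliary Kummer extension and in particular no separate verification that $F(\zeta_e^{1/d})/F$ is cyclic of order exactly $d$. Your counterexample $X^4+y^2$ over $\mathbb{Q}((y))$ producing $C_2\times C_2$ is a nice confirmation that the excluded case $v_2(e)=1<v_2(d)$ is genuinely obstructive. One cosmetic caveat: the subfields of $\mathbb{Q}(\zeta_d)/\mathbb{Q}(\zeta_e)$ form a chain only when $d/e$ is a prime power; what your argument actually uses (and what is true) is that any proper subgroup of the cyclic group $1+e\mathbb{Z}/d\mathbb{Z}$ is contained in a maximal one, whose fixed field is $\mathbb{Q}(\zeta_{\ell e})$ for some prime $\ell\mid d/e$.
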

\begin{proof}
Let $x= (\zeta_{e} y^{d/e})^{1/d}$ be a root of $P$. 
Then $x^{e} = \zeta_{e}^{e/d}y$, 
so that $F(x)$ contains $K(\zeta_{d})((y))$.  
Furthermore, $x$ has $y$-adic valuation $1/e$, so that the extension $F(x)/K(\zeta_{d})((y))$ is of degree divisible by $e$.
Since in addition $e$ and $d$ have the same prime divisors and $\mu_d\cap K(\zeta_e)=\mu_e$, the degree of $K(\zeta_{d})/K(\zeta_e)$ is $d/e$, and hence:
$$
[F(x) : F]  = [F(x) : K(\zeta_{d})((y))] \cdot [K(\zeta_{d})((y)): F] 
 \ge e\cdot d/e=d, 
$$ 
This forces equality, and in particular shows the irreducibility of $P$. 
On the other hand, $F(x) = K(\zeta_{d})((y))(x)$ is already a splitting field of $P$. The claims about inertia subgroup and residue extension are obvious from the above treatment.

It remains to show that $E/F$ is cyclic. For a prime $p\divides d$, let $e_p$ and $d_p$ denote the maximal $p$-powers dividing $e$ and $d$, respectively. Note that $E$ contains a field $E_p=F(x_p)$, where $x_p^{d_p}=\zeta_{e_p}y^{d_p/e_p}$ for some primitive $e_p$-th root of unity $\zeta_{e_p}$. By the above,
\JK{$E_p/F_p$ is Galois of degree $d_p$,} and hence $E$ is the compositum of the fields $E_p$, where $p$ runs over prime divisors of $d$. It therefore suffices to show that $E_p/F$ is cyclic for each prime $p\divides d$. As we have reduced  the claim to this case, we shall henceforth assume $d$ is a power of a single prime $p$. 

By assumption, if $p=2$ either $v_2(e)=v_2(d)=1$ or $v_2(e)\geq 2$. In the former case, $E/F$ is quadratic and hence cyclic. Henceforth assume either $p$ is odd or that $p=2$ and $4\divides e$ in which case $\sqrt{-1}\in F$. It follows that  $F(\zeta_e^{1/d})/F$ has cyclic Galois group   $\langle\tau\rangle$ of order $d$. As $\zeta_e\in F$, the extension $F(y^{1/e})/F$  has cyclic Galois group $\langle \sigma\rangle$ of order $e$. As the first is unramified and the second is \JK{totally} ramified, these two extensions are linearly disjoint over $F$, and hence $F(\zeta_e^{1/d},y^{1/e})/F$ is Galois and its Galois group may be identified with $\langle\sigma\rangle\times\langle\tau\rangle$, where $\sigma$ fixes $\zeta_e^{1/d}$ and $\tau$ fixes $y^{1/e}$. Noting that $\tau^{d/e}$ fixes $\zeta_d=(\zeta_{e}^{1/d})^e$, observe that   $\tau^{d/e}(\zeta_e^{1/d})/\zeta_e^{1/d}$ and $\sigma(y^{1/e})/y^{1/e}$ 
are primitive $e$-th roots of unity. We may replace $\tau$ by a coprime to $p$ power of $\tau$ to assume these two roots of unity are equal. In particular, $\sigma\tau^{-d/e}$ fixes $x=\zeta_e^{1/d}y^{1/e}$. Since $\sigma\tau^{-d/e}$ is of order $e$, 
the degree of $F(\zeta_e^{1/d},y^{1/e})^{\sigma\tau^{-d/e}}/F$ is $de/e=d=[F(x):F]$. 
It follows that $F(x)/F$ is the extension fixed by  $\sigma\tau^{-d/e}$, and hence its Galois group is isomorphic to $\langle \sigma,\tau\rangle/\langle \sigma\tau^{-d/e}\rangle$. The latter is cyclic, generated e.g.\ by the coset of $\tau$, proving the claim.
\end{proof}

Recall that the only primes $\fp$ for which $K_\fp$ can  have a  cyclic extension of ramification index $e'$ are those for which $\zeta_{e'}\subseteq K_\fp$, see \S\ref{sec:local}. 
We divide such primes into the sets $\mathcal S_{e}^{(d)}$ of all primes $\fp$ such that $\mu_d\cap K_\fp = \mu_{e}$, where $e$ runs over positive integers which divide $d$ and are divisible by $e'$. 
For each set $\mathcal S_e^{(d)}$, we consider specializations over $K_\fp$ for all  $\fp\in \mathcal S_e^{(d)}$. Letting $E^{(e,d)}$ be the extension from Lemma \ref{lem:construct1}, we have:


\begin{lemma}\label{lem:spec-odd}
Let $e\divides d$ be positive integers with the same prime divisors such that $\mu_d\cap K(\zeta_e)=\mu_e$, and either $v_2(e)=v_2(d)\leq 1$ or $v_2(e)\geq 2$. 
Let $E/K(t)$ be an extension with completion $E^{(e,d)}/K(\zeta_{e})((y))$. 
Then for all but finitely many primes $\fp\in \mathcal S_e^{(d)}$,  
every  $C_d$-extension of $K_\fp$ with ramification index dividing $e$ is a specialization of $EK_\fp/K_\fp(t)$. 
\end{lemma}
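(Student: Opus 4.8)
The plan is to apply the realizability criterion of Proposition \ref{thm:KLN2} at the branch point $t_1$ of $E/K(t)$ whose completion is the prescribed $E^{(e,d)}/K(\zeta_e)((y))$. Since the residue field of $K(t)$ at $t_1$ is the constant field $K(\zeta_e)$ of this local extension, we have $K(t_1)=K(\zeta_e)$, and by Lemma \ref{lem:construct1} the associated data are $D=C_d$, $I=C_e$ (so $D/I=C_{d/e}$) and residue extension $N=K(\zeta_d)$, with $N/K(t_1)=K(\zeta_d)/K(\zeta_e)$ cyclic of order $d/e$ by the hypothesis $\mu_d\cap K(\zeta_e)=\mu_e$. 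I then fix $\fp\in\mathcal S_e^{(d)}$ outside the finite exceptional set of Proposition \ref{mainlemma_kln}, away from the primes dividing $d$ and those ramifying in $K(\zeta_d)/K$. As $\mu_e\subseteq K_\fp$, the prime $\fp$ splits completely in $K(\zeta_e)$, so it admits a degree-$1$ prime $\fp'$, and we may identify $K(\zeta_e)_{\fp'}=K_\fp$.

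The crucial step is to show that the decomposition group $D_{t_1,\fp'}$ of $\fp'$ in $N/K(t_1)$ is all of $D/I=C_{d/e}$, equivalently that $[K_\fp(\zeta_d):K_\fp]=d/e$. Writing $q:=N(\fp)$, the condition $\fp\in\mathcal S_e^{(d)}$ reads $\gcd(d,q-1)=e$, and $D_{t_1,\fp'}$ is generated by the (unramified) Frobenius $\zeta_d\mapsto\zeta_d^{q}$, whose order equals the multiplicative order of $q$ modulo $d$. Factoring $d=\prod_p p^{a_p}$ and $e=\prod_p p^{b_p}$, I would compute this order prime by prime via the lifting-the-exponent principle: for odd $p$ the group $(\zz/p^{a_p})^\times$ is cyclic, and $v_p(q-1)=b_p$ (or $v_p(q-1)\ge a_p$ when $b_p=a_p$) forces $\mathrm{ord}_{p^{a_p}}(q)=p^{a_p-b_p}$; for $p=2$ the two clauses of the hypothesis ($v_2(e)=v_2(d)\le 1$, resp.\ $v_2(e)\ge 2$) are exactly what is needed to avoid the non-cyclicity of $(\zz/2^{a})^\times$ and again yield $\mathrm{ord}_{2^{a_2}}(q)=2^{a_2-b_2}$; note that the assumption that $e$ and $d$ share the same prime divisors rules out the remaining degenerate possibility $b_2=0<a_2$. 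Taking the least common multiple over the distinct primes gives $\mathrm{ord}_d(q)=\prod_p p^{a_p-b_p}=d/e$, as desired. Consequently $D':=\varphi^{-1}(D_{t_1,\fp'})=D=C_d$.

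It remains to verify that every $C_d$-extension $T_\fp/K_\fp$ of ramification index $e''$ dividing $e$ meets the hypotheses of Proposition \ref{thm:KLN2}, i.e.\ that it is a $D'$-extension with $T_\fp^I\cong NK_\fp/K_\fp$. The first condition is immediate from $D'=C_d$. For the second, $I=C_e$ is the unique subgroup of order $e$ in $C_d$, and since the inertia subgroup of $T_\fp$ has order $e''\mid e$ it is contained in $I$; hence $T_\fp^I$ lies inside the maximal unramified subextension of $T_\fp$ and is therefore the unique unramified extension of $K_\fp$ of degree $d/e$. By the previous paragraph that extension is precisely $K_\fp(\zeta_d)=NK_\fp$, so $T_\fp^I\cong NK_\fp/K_\fp$. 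Proposition \ref{thm:KLN2} then produces $t_0\in K$ whose specialization $E_{t_0}/K$ has completion $T_\fp/K_\fp$ at $\fp$, proving that $T_\fp\in Sp(EK_\fp/K_\fp(t))$, which is the assertion.

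The main obstacle is the middle step: pinning down $[K_\fp(\zeta_d):K_\fp]$ from the congruence data on $q=N(\fp)$. This is where all three arithmetic hypotheses ($\mu_d\cap K(\zeta_e)=\mu_e$, the shared prime divisors of $e$ and $d$, and the $2$-adic valuation condition) are genuinely used; the failure of $(\zz/2^a)^\times$ to be cyclic for $a\ge 3$ is exactly the phenomenon the $v_2$-condition is designed to circumvent, and without it the decomposition group can be strictly smaller than $C_{d/e}$, breaking the identification $T_\fp^I\cong NK_\fp$ and hence the realizability of the full range of ramification indices.
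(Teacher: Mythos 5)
Your proof is correct and follows essentially the same route as the paper's: both reduce the statement to Proposition \ref{thm:KLN2} after verifying that a degree-$1$ prime $\fp'$ of $K(\zeta_e)$ above $\fp$ is inert in $K(\zeta_d)/K(\zeta_e)$, so that the preimage $D'$ is all of $C_d$ and $T_\fp^I$ is forced to be $NK_\fp$. The only differences are cosmetic: the paper first invokes Proposition \ref{mainlemma_kln}iii) to exhibit one specialization with full group $C_d$ before citing Proposition \ref{thm:KLN2}, and it asserts the inertness of $\fp'$ without the explicit order computation $\mathrm{ord}_d(N(\fp))=d/e$ that you carry out via lifting the exponent.
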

\begin{proof}
Using Proposition \ref{mainlemma_kln}iii) and the fact that $\fp$ splits completely in $K(\zeta_{e})$, we find (infinitely many\footnote{Indeed, every $t_0$ such that $t_0-\zeta_{p^e} (\in K_\fp!)$ is of $\fp$-adic valuation $1$ is good enough.}) $t_0\in K$ such that the completion of $E_{t_0}/K$ at $\fp$ is ramified of index $e$ and whose residue degree at $\fp$ is the residue degree of $\fp$ in $K(\zeta_{d})/K(\zeta_{e})$. Since $\fp\in\mathcal S_e^{(d)}$, this degree is $d/e$, and hence $(E_{t_0})_\fp/K_\fp$ is of degree $d$. 
Since $E^{(e,d)}/F$ has Galois group $C_d$, it follows that  $\Gal((E_{t_0})_\fp/K_\fp)$ is the entire group  $C_d$. 

Using now additionally \JK{Proposition \ref{thm:KLN2}}, we obtain that in fact {\it all} $C_d$-extensions of ramification index  $e'\,|\, e$ are in $Sp(E\cdot K_\fp/K_\fp(t))$. This holds for all primes $\fp\in \mathcal S_e^{(d)}$ as these split completely in $K(\zeta_{e})$ and remain inert in $K(\zeta_{d})/K(\zeta_{e})$. 
\end{proof}

%
%
%

\begin{proof}[Proof of Theorem \ref{thm:cyclic_odd}]
Following Lemma \ref{lem:pv}, we may treat primes $\fp$ of $K$ separately in what follows. In the following the prime $\fp$ is always tacitly assumed to be outside some finite set of exceptional primes arising from  Proposition \ref{mainlemma_kln}.
Due to \cite{DG}, for any $K$-regular $G$-extension ${E}/K(t)$ and all but finitely many primes $\fp$ of $K$, all {\it unramified} extensions of $K_\fp$ with group embedding into $G$ are in $Sp({E}\cdot K_\fp/K_\fp(t))$. We may therefore focus on (tamely) ramified extensions of $K_\fp$. 

Note that
our group $G$ is chosen such that the only cyclic subgroups are subgroups of $C_P$ or (up to isomorphism) of $C_Q$. Thus, 
every subgroup of $G$ is of the form $U\rtimes V$ with non-trivial subgroups $U\le C_P$ and $V\le C_Q$ such that $V$ acts as a subgroup of  $\Aut(U)$. 
%
%
It then suffices to construct an extension ${E}/K(t)$ fulfilling the following:
\begin{itemize}
\item[(*)] For each pair $(e,d)$ such that $e>1$, $e\divides d$, and $d\divides P$, and for all but finitely many primes $\fp$ of $K$ such that $K_\fp$ possesses a $C_{d}$-extension of ramification $e$,  
all $C_{d}$-extensions of $K_\fp$ with ramification index $e$ are in $Sp({E}\cdot K_\fp/K_\fp(t))$.
\item[(**)] The same with $P$ replaced by $Q$.
\item[(***)] For every pair $(e,f)$ such that $e,f>1$,  $e\divides P$, and $f\divides Q$,  and for all but finitely many primes $\fp$ of $K$ such that $K_\fp$ possesses a $(C_e\rtimes C_f)$-extension of ramification index $e$ 
(with the semidirect product inherited from $C_P\rtimes C_Q$), each such extension 
is in $Sp({E}\cdot K_\fp/K_\fp(t))$.
\end{itemize}

We start with Condition (*). Since $P$ is either an odd prime power or $P=2$, the numbers  $e$ and $d$ have the same prime divisors for each pair $\pi=(e,d)$ as in (*) and $v_2(e)=v_2(d)\leq 1$. For each such pair $\pi$ such that $\mu_d\cap K(\zeta_e)=\mu_e$, we fix a $K(\zeta_e)$-rational place $t\ra t_\pi$ so that the completion at $t\ra t_\pi$ is isomorphic to the field $F^{(\pi)}=F^{(e,d)}$
from Lemma \ref{lem:construct1}. Moreover, we choose all places $t\mapsto t_\pi$ to be distinct.  
Since $C_P\rtimes C_Q$ possesses a generic polynomial over $K$  \cite[Theorem 7.2.2]{JLY}, 
 a theorem of Saltman \cite[Theorem 5.9]{S82} implies there
 exists a $(C_P\rtimes C_Q)$-extension $E/K(t)$ whose completion at each of the places $t\ra t_\pi$ is the extension $E^{(\pi)}/F^{(\pi)}$ constructed in Lemma \ref{lem:construct1}. 

\DN{Recall that every prime $\fp$ of $K$ for which $K_\fp$ has a tame $C_{d}$-extension of ramification index $e$ is such that $\zeta_{e}\subseteq K_\fp$, for each $e> 1$. 
Write $\mu_d\cap K_\fp=\mu_{e'}$ so that $e\divides e'$ and $\fp\in \mathcal S_{e'}^{(d)}$, and set $\pi=(e',d)$. 
 Since $E^{(\pi)}/F^{(\pi)}$ is a completion of $E/K(t)$, Lemma \ref{lem:spec-odd} then implies that every $C_d$-extension of $K_\fp$ with ramification index $e$ (dividing $e'$) is a specialization of $EK_\fp/K_\fp(t)$, so that (*) holds. }  

By adding finitely many more analogous completions with $P$ replaced by $Q$, we may assume that it also achieves (**).

Finally, to fulfill also condition (***), we  add for each $f,e$ as in (***), the $C_e\rtimes C_f$-extension constructed in \cite[Corollary 3.6]{KN} to the completions of $E/K(t)$. \cite[Corollary 3.11]{KN} then implies that every $(C_e\rtimes C_f)$-extension of $K_\fp$ with ramification index $e$ is a specialization of \JK{$EK_\fp/K_\fp(t)$}, for all but finitely many primes $\fp$ of $K$. 
\end{proof}

We can also conclude a proof of Theorem \ref{thm:qi}. As the forward direction was shown earlier in the section, it suffices to show:
\begin{proof}[Proof of Theorem \ref{thm:qi}, converse assertion]
Let $\fp$ be an odd prime. We first claim that $K_\fp$ has no extensions whose Galois group a generalized quaternion group. 
Since $\sqrt{-1}\in K\subseteq K_\fp$, 
and the action of Frobenius on the inertia group is equivalent to its action on roots of unity, cf.\ Section \ref{sec:local}, 
the quaternion group does not appear as a Galois group over $K_\fp$ or any finite extension of it, and hence the generalized  quaternion groups are not Galois groups over $K_\fp$.

Thus, whether  $G$ is a generalized quaternion group or  $G=C_P\rtimes C_Q$ for coprime prime powers $P$ and $Q$, as in the proof of Theorem \ref{thm:hgp}, it suffices to construct an extension $E/K(t)$ fulfulling: 
\begin{enumerate}
    \item[(*)] For each $(e,d)$ such that $e>1$, $e\divides d$, and $d\divides P$, and for all but finitely many primes $\fp$ of $K$ such that $K_\fp$ possesses a $C_d$-extension of ramification index $e$, all $C_d$-extensions of $K_\fp$ with ramification index $e$ are in $Sp({E}\cdot K_\fp/K_\fp(t))$. 
    \item[(**)] The same with $P$ replaced by $Q$. 
    \item[(***)] For every pair $(e,f)$ such that $\JK{e,f}>1$, $e\divides P$, and $f\divides Q$, and for all but finitely many primes $\fp$ of $K$ such that $K_\fp$ posses a $(C_e\rtimes C_f)$-extension of ramification index $e$, each such extension is in $Sp(E\cdot K_\fp/K_\fp(t))$. 
\end{enumerate}
Since $G$ has a generic polynomial over $K$, once again we construct $E/K(t)$ using \cite[Theorem 5.9]{S82} so that it has the following completions at distinct places. For each $(e,d)$ as in (*) and (**) satisfying $\mu_d\cap K(\zeta_e)=\mu_e$, $E/K(t)$ has a completion isomorphic to the extension $E^{(e,d)}/F^{(e,d)}$ from Lemma \ref{lem:construct1}. (Note that it is possible to apply Lemma \ref{lem:construct1} since $\sqrt{-1}\in K$). For each $(e,d)$ as in (***), $E/K(t)$ should admit a completion isomorphic to the $(C_e\rtimes C_f)$-extension with ramification $e$ constructed in \cite[Corollary 3.6]{KN}.  
As in the proof of Theorem \ref{thm:hgp},  Lemma \ref{lem:spec-odd} (resp.\ \cite[Corollary 3.11]{KN})  implies that $E/K(t)$ has  (*), (**) (resp.\ (***)), as desired.  
\end{proof}
Finally, we show that there are no further  obstructions to $\ld_K(G)=1$ for cyclic groups $G$ when $K$ contains the $p$-th roots of unity for every prime $p\divides |G|$, under the further assumption that $K(\mu_{2^\infty})/K$ is  cyclic if $|G|$ is even. 
\begin{theorem}\label{thm:main-cyclic}
Let $K$ be a number field, 
and $G$ a cyclic group 
such that $\zeta_{p}\in K$ for every prime $p\divides |G|$.
If $G$ is of even order, further assume that $K(\mu_{2^\infty})/K$ is cyclic. 
Then there exists a $K$-regular $G$-extension $E/K(t)$ with the Hilbert--Grunwald property. In particular,  $\ld_K(G)=1$. 
\end{theorem}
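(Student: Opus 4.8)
The plan is to follow the blueprint of Theorem \ref{thm:cyclic_odd}, reducing via Lemma \ref{lem:pv} to a purely local statement and then building a single $C_n$-extension $E/K(t)$ (where $n=|G|$) with prescribed completions at finitely many branch points. By Lemma \ref{lem:pv} it suffices to produce $E/K(t)$ such that, for all but finitely many primes $\fp$ of $K$, every cyclic extension $L/K_\fp$ of order dividing $n$ lies in $Sp(E\cdot K_\fp/K_\fp(t))$. Writing $n=\prod_p p^{a_p}$ and classifying such $L$ by its ramification index $e$ and residue degree $f=[L:K_\fp]/e$ (so that $e\divides ef\divides n$ and, since $L$ is cyclic and tame, $\mu_e\subseteq K_\fp$), the task becomes to realize every admissible pair $(e,f)$ at every admissible $\fp$.

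First I would dispose of the two extreme cases exactly as before. The unramified part ($e=1$, arbitrary $f$) is handled for all but finitely many $\fp$ by \cite{DG}. For the totally ramified part, and more generally for pairs $(e,f)$ in which $f$ is supported on the same primes as $e$, I would prescribe at suitable branch points the local completions $E^{(e',d')}/F^{(e',d')}$ produced by Lemma \ref{lem:construct1}; the existence of one $C_n$-extension $E/K(t)$ carrying all these completions simultaneously follows from \cite[Theorem 5.9]{S82}, which applies because $G=C_n$ possesses a generic extension over $K$. This is precisely where the hypotheses $\zeta_p\in K$ and, for even $n$, $K(\mu_{2^\infty})/K$ cyclic are used, via \cite[Theorem 2.1]{S82}. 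Lemma \ref{lem:spec-odd} then upgrades each such completion to the assertion that, for almost all $\fp$ in the corresponding class $\mathcal S_{e'}^{(d')}$, all intended $C_{d'}$-extensions of ramification dividing $e'$ appear among the specializations.

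The genuinely new difficulty, and the step I expect to be the main obstacle, is realizing pairs $(e,f)$ in which the residue degree $f$ is divisible by a prime $q$ that does \emph{not} divide the ramification index $e$, that is, an unramified $C_q$-layer sitting over ramification coprime to $q$. By Propositions \ref{mainlemma_kln} and \ref{thm:KLN2}, the residue degree produced at $\fp$ from a branch point equals the order of the Frobenius of $\fp$ in the cyclic residue extension $N/K(t_i)$ at that branch point; hence realizing residue degree $f$ at $\fp$ forces $\fp$ to have Frobenius of order $f$ in some chosen residue field. No single cyclic extension, nor any finite family of them, makes every prime behave this way, so a naive Chebotarev argument is unavailable. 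Here I would invoke the theory of cyclic embedding problems over number fields, following \cite[\S 16.5]{FJ}, Geyer--Jansen \cite{GJ} and Arason--Fein--Schacher--Sonn \cite{AFSS}, to construct a finite family of cyclic extensions of $K$ whose local behaviors interlock; this mirrors the group-theoretic mechanism behind the classical construction of a polynomial with a root modulo every prime, where inside a $C_2\times C_2$-extension every Frobenius splits in at least one of the three quadratic subfields. The goal is to arrange the prescribed branch points so that every admissible $\fp$ is caught by at least one of them. The case $q=2$, relevant when $n$ is even, is the most delicate: embedding a quadratic extension into a cyclic $2$-power extension carries the sum-of-two-squares obstruction of \cite[Proposition 16.5.1]{FJ}, and the hypothesis that $K(\mu_{2^\infty})/K$ be cyclic is exactly what guarantees solvability of the relevant $2$-power embedding problems.

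Finally I would assemble everything: prescribe all of the above completions on one generic-extension-based $E/K(t)$, apply Proposition \ref{thm:KLN2} at each branch point to recover not merely one but all admissible twists of the corresponding local extension, and conclude by Lemma \ref{lem:pv} that $E/K(t)$ has the Hilbert--Grunwald property, whence $\ld_K(G)=1$.
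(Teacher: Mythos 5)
Your architecture (reduce via Lemma \ref{lem:pv}, handle the unramified part by \cite{DG}, prescribe completions from Lemma \ref{lem:construct1} via Saltman's theorem, upgrade with Proposition \ref{thm:KLN2}) matches the paper's, and you cite the right auxiliary tools for the delicate part. However, you have misplaced the crux, and as a result the genuinely hard step is missing while the step you flag as the main obstacle is not an obstacle at all. The case you designate as hard --- residue degree divisible by a prime $q$ not dividing the ramification index $e'$ --- comes for free: since $\zeta_p\in K$ for every $p\divides|G|$, every prime $\fp$ satisfies $\mu_d\cap K_\fp=\mu_e$ for some $e$ with the \emph{same radical} as $d$, so one only ever needs branch points whose geometric ramification index $e$ is divisible by every prime dividing $d$; Proposition \ref{thm:KLN2} applied at that one branch point then delivers \emph{all} $C_d$-extensions of $K_\fp$ with ramification index $e'$ for \emph{every} $e'\divides e$, including those where some prime divides $d/e'$ but not $e'$. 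No interlocking family and no Chebotarev-type argument is needed there; moreover your claim that ``realizing residue degree $f$ forces $\fp$ to have Frobenius of order $f$ in some chosen residue field'' is not what Propositions \ref{mainlemma_kln} and \ref{thm:KLN2} require --- the constraint is on the order of Frobenius in the residue extension $N/K(t_i)$ (which is $d/e$ with $e=|I|$), not on $f=d/e'$.

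The real difficulty is a case you have filed in your ``easy'' bucket: pairs with $v_2(e)=1<v_2(d)$, i.e., $4\divides d$, ramification index exactly divisible by $2$, occurring at primes $\fp$ with $\sqrt{-1}\notin K_\fp$. There Lemma \ref{lem:construct1} is simply inapplicable (its hypothesis excludes $v_2(e)=1<v_2(d)$ precisely because $X^d-\zeta_e y^{d/e}$ does not generate a \emph{cyclic} extension then --- the usual $C_2\hookrightarrow C_4$ obstruction), so your plan of invoking it for all pairs ``with $f$ supported on the primes of $e$'' breaks down. Worse, even the residue field must be changed: for such $\fp$ one can have $[K_\fp(\zeta_d):K_\fp]<d/e$ (e.g.\ Frobenius $\equiv -1 \bmod 2^{v_2(d)}$, reflecting that $(\zz/2^k\zz)^\times$ is not cyclic), so the cyclotomic residue extension $K(\zeta_d)/K(\zeta_e)$ cannot supply the needed residue degree at every $\fp$. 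This is exactly where the paper deploys the machinery you only gesture at: Proposition \ref{prop:cyclic} and Lemma \ref{lem:2-constr} build \emph{three} completions whose residue fields $N_j/M_j$ sit over the three quadratic subfields $M_j$ of a biquadratic extension of $K(\zeta_e)$ (chosen by Grunwald--Wang, linearly disjoint from $K(\zeta_{2^{s+1}})$), each $N_j$ containing $M_j(\sqrt{-1})$ inside a $C_{2^r}$-extension obtained by solving a Frattini embedding problem via Geyer--Jansen/AFSS, with the local solvability at even primes checked through the sum-of-two-squares criterion; Lemma \ref{lem:2-spec} then catches every $\fp$ because it splits in at least one $M_j$ and is inert in $M_j(\sqrt{-1})$. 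Without this construction your proof does not close; note also that the paper uses the hypothesis that $K(\mu_{2^\infty})/K$ is cyclic only to guarantee the generic extension needed for Saltman's theorem, not to solve the $2$-power embedding problems, which are handled over the auxiliary fields $M_j$ unconditionally.
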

We note that the proof uses the cyclicity of  $K(\mu_{2^\infty})/K$ only in order to ensure that $G$ has a generic extension. This in turn is used only to assert the existence of an extension $E/K(t)$ with desired local completions. It is possible that such an extension exists even when $G$ has no generic extension. 



Since the case of cyclic groups of odd prime power order (or of order $2$) is treated in Theorem \ref{thm:cyclic_odd}, in the following we assume either that $|G|$ is not a prime power or that $|G|$ is a power of $2$.

We start the proof of Theorem \ref{thm:main-cyclic} by constructing the completions of $E/K(t)$:

\begin{prop}\label{prop:cyclic}
Let $e,d$ be positive integers with the same prime divisors, such that $v_2(d)\geq 2$ and $v_2(e)=1$.
Let $K$ be a totally complex number field containing $\zeta_p$ for all primes $p\divides e$. 
Then there exist complete fields $F_j=F_{j}^{(e,d)}$ and $C_{d}$-extensions $E_j^{(e,d)}/F_j$, $j=1,2,3$ with ramification index $e$ and residue extensions $N_j/M_j$, $j=1,2,3$ satisfying: \\ (i) $M_j$, $j=1,2,3$ are contained in a common biquadratic extension of $K(\zeta_e)$, and \\
(ii) $N_j$ contains the quadratic extension $M_j(\sqrt{-1})/M_j$ and the extension  $M_j(\zeta_{d'})/M_j$ where $d'=d/2^{v_2(d)}$, for $j=1,2,3$. 
\end{prop}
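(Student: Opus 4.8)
\emph{Plan.} The strategy is to build the three $C_d$-extensions prime-by-prime, treating the odd part and the $2$-part of $d$ separately and then taking compositums. Write $d=2^k d'$ and $e=2e'$ with $d',e'$ odd and $k=v_2(d)\ge 2$; by hypothesis $d'$ and $e'$ share the same prime divisors. For each odd prime $p\divides d'$ the relevant local pieces are $C_{p^{a}}$-extensions of prescribed ramification, which are produced directly by Lemma \ref{lem:construct1} (whose $2$-adic hypothesis is vacuous for odd $p$); their residue extensions contribute exactly the factor $\zeta_{d'}$ demanded in the second half of (ii). Thus the entire content of the proposition is concentrated in the $2$-part: constructing, over three suitable complete fields, a $C_{2^k}$-extension of ramification index $2$ whose residue field contains $\sqrt{-1}$. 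Note that this is precisely the case excluded from Lemma \ref{lem:construct1} (there $v_2(e)=1$ forces $v_2(d)=1$), so a genuinely new construction is needed.

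First I would carry out the core $2$-part construction over a single residue base $M$ (a number field with $K(\zeta_e)\subseteq M$ and $\sqrt{-1}\notin M$, the latter being automatic since $4\nmid e$). Choose an unramified model $F=M((y))$. The fixed field of the order-$2$ inertia subgroup of the sought $C_{2^k}$-extension must be the unramified $C_{2^{k-1}}$-extension $N_0((y))/M((y))$ attached to a cyclic degree-$2^{k-1}$ extension $N_0/M$; I would select $N_0$ so that its unique quadratic subfield is $M(\sqrt{-1})$, which forces $\sqrt{-1}\in N_0$ and hence into the residue field, giving the first half of (ii). The existence of such an $N_0$ amounts to embedding the quadratic extension $M(\sqrt{-1})/M$ into a cyclic extension of $2$-power degree, which is handled by the quadratic-into-cyclic embedding theory of \cite[\S 16.5]{FJ} (exactly the input flagged in the introduction). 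With $N_0$ fixed, I would then solve the embedding problem $C_{2^{k-1}}\hookrightarrow C_{2^k}$ for the unramified extension $N_0((y))/F$: it is solvable, since the unramified $C_{2^k}$-extension is one solution, and because the kernel $C_2$ lies in the Frattini subgroup $\Phi(C_{2^k})=C_{2^{k-1}}$, \emph{every} solution is a full $C_{2^k}$-extension. Twisting the unramified solution by the ramified quadratic character $\sqrt{y}$ then yields a $C_{2^k}$-extension of ramification index exactly $2$ with the desired residue field, completing the single-base construction.

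Next I would produce the three fields $F_1,F_2,F_3$ and arrange condition (i). The reason a single base does not suffice is that, in the eventual application via Proposition \ref{thm:KLN2}, the local $C_d$-extensions of a given $K_\fp$ with ramification index $e$ are distinguished by quadratic data (the square class of the uniformizer together with the splitting behaviour of $-1$), so the branch-point models must realize several square classes. Accordingly I would fix a biquadratic extension $K(\zeta_e)(\sqrt{\alpha},\sqrt{\beta})/K(\zeta_e)$ and let $M_1,M_2,M_3$ be three of its intermediate fields (for instance $K(\zeta_e)$ together with two of the quadratic subfields); condition (i) then holds by construction. Over each $M_j$ I would run the $2$-part construction above to obtain a $C_{2^k}$-extension with $\sqrt{-1}$ in its residue field, and compose it with the odd-part extensions from Lemma \ref{lem:construct1} (matching the uniformizer $y$) to obtain $C_d$-extensions $E_j^{(e,d)}/F_j$ of ramification index $e$, whose residue extensions $N_j/M_j$ then satisfy both halves of (ii).

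The main obstacle is the $2$-part in the absence of $\sqrt{-1}$ in the base. Two points require care: ensuring that the chosen $N_0$ is simultaneously cyclic of degree $2^{k-1}$ and contains $\sqrt{-1}$ (an embedding of a $C_2$-extension into a $C_{2^{k-1}}$-extension, which can fail over a general field but is available here by the quadratic-into-cyclic results of \cite[\S 16.5]{FJ}), and verifying that twisting by $\sqrt{y}$ preserves cyclicity --- which is exactly what the Frattini property of the embedding problem guarantees. The secondary subtlety is bookkeeping: checking that the three residue bases $M_j$ can be fitted simultaneously inside one biquadratic extension of $K(\zeta_e)$ while each still admits the required $C_{2^{k-1}}$-extension containing $\sqrt{-1}$, so that conditions (i) and (ii) hold together.
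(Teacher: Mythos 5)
Your outline reproduces the paper's architecture almost exactly: split $d$ into its odd part $d'$ and $2$-part, obtain the $C_{d'}$-piece of ramification index $e'=e/2$ from Lemma \ref{lem:construct1} over the base $M_j$, build the $2$-part by taking a global cyclic $2$-power extension of $M_j$ containing $M_j(\sqrt{-1})$ and twisting its top quadratic layer by the uniformizer (the Frattini argument you give for why the twist stays cyclic is the same justification the paper uses for $N_j''F_j(\sqrt{a_jy_j})$), and take the $M_j$ inside a common biquadratic extension of $K(\zeta_e)$ so that every prime splits in at least one of them. So the skeleton is correct and is the paper's.

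However, there is a genuine gap at the step you describe as ``handled by the quadratic-into-cyclic embedding theory of \cite[\S 16.5]{FJ}'': the existence, for \emph{each} of the three chosen $M_j$, of a $C_{2^r}$-extension $N_j'/M_j$ containing $M_j(\sqrt{-1})$ is precisely the hard part of the proof, and it is not a consequence of \cite[Proposition 16.5.1]{FJ} for an arbitrary (even totally complex) base. Already for $r=2$, embedding $M(\sqrt{-1})/M$ into a $C_4$-extension requires $-1$ to be a sum of two squares in $M$, which fails whenever some dyadic completion $M_\nu$ has odd degree over $\mathbb{Q}_2$ (then $(-1,-1)_{M_\nu}$ remains a division algebra); for larger $r$ there are further obstructions at the dyadic primes. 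In particular your suggested choice ``$K(\zeta_e)$ together with two of the quadratic subfields'' does not work: over $M_1=K(\zeta_e)$ itself the embedding problem may simply be unsolvable, and the entire point of using the three quadratic subextensions of a biquadratic extension is to avoid ever having to solve it over the base field. The paper's Lemma \ref{lem:2-constr} supplies exactly what you are missing: a Grunwald--Wang construction of $M_1,M_2$ prescribing the completions at the dyadic primes in $S_0(K)$ (and prescribing inertness at auxiliary odd primes to force linear disjointness from $K(\zeta_{2^{s+1}})$), so that over each quadratic subfield $M_j$ the local embedding problems become solvable everywhere, after which the local--global theorem of Geyer--Jensen \cite[Theorem 2]{GJ} (together with \cite{AFSS}) produces the global $C_{2^r}$-extension. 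Without this input your construction of the $N_j'$, and hence of the residue fields $N_j\supseteq M_j(\sqrt{-1})$, does not go through.
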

We first construct the residue fields of the above completions: 
\begin{lemma}\label{lem:2-constr}
Let $K$ be a number field such that $\sqrt{-1}\notin K$, and let $r\geq 2$ be an integer. 
Then there exists a biquadratic extension $M/K$ such that for each of its quadratic subextensions $M_j/K$, $j=1,2,3$, the extension $M_j(\sqrt{-1})/M_j$ is a quadratic extension which embeds into a $C_{2^r}$-extension. 
\end{lemma}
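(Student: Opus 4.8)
The plan is to reduce the desired embedding property to purely local conditions at the dyadic places of $K$ and then to realize a biquadratic extension with prescribed local behaviour there. Recall from \cite[Proposition 16.5.1]{FJ} that $M_j(\sqrt{-1})/M_j$ embeds into a $C_4$-extension if and only if $-1$ is a sum of two squares in $M_j$, and that the higher criteria for embedding into a $C_{2^r}$-extension (following \cite{GJ} and \cite{AFSS}) likewise reduce, via the Hasse principle for these embedding problems, to local solvability at every place. First I would observe that since $K$ is totally complex (the case relevant for Proposition \ref{prop:cyclic}), every quadratic subextension $M_j$ is totally complex as well, so the archimedean obstruction — the only one forcing $-1$ not to be a sum of squares — disappears, while at every finite place of odd residue characteristic the condition is automatic. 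Hence the whole problem concentrates at the finitely many primes of $K$ above $2$.

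The heart of the argument is a local construction at each dyadic prime $v$. Here I would specify the \'etale algebra $M\otimes_K K_v$ — equivalently, the local square classes of the generators $a,b$ with $M=K(\sqrt a,\sqrt b)$ — so that each of the three local algebras $M_j\otimes_K K_v$ contains enough $2$-power roots of unity (at least $\sqrt{-1}$ for the first layer, and $\zeta_{2^r}$ to clear the higher layers) for the local embedding of $M_j(\sqrt{-1})$ into a $C_{2^r}$-extension to be unobstructed. The subtlety, and the reason a naive choice fails, is that the three subfields correspond to $a$, $b$, and $ab$, so one cannot force $\sqrt{-1}$ into all three simultaneously: the relation $a\cdot b=ab$ forbids it, exactly as in the Grunwald--Wang special case. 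Instead I would distribute the three cyclotomic square roots $\sqrt{-1},\sqrt2,\sqrt{-2}$ among the three subfields at each dyadic place, using the elementary fact — the same one behind polynomials with a root modulo every prime — that at least one of $-1,2,-2$ is a square in any given completion, so that the required local conditions for all three $M_j$ can be met at once.

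Having fixed admissible local conditions at all dyadic primes, I would invoke weak approximation (Grunwald--Wang) to produce actual elements $a,b\in K^\times$ whose square classes realise the prescribed local behaviour at every prime above $2$, while imposing one further condition at an auxiliary odd prime to guarantee that none of $a,b,ab$ equals $-1$ modulo squares; this ensures $\sqrt{-1}\notin M$ and that $M/K$ is genuinely biquadratic. By construction each $M_j$ is totally complex, and $M_j(\sqrt{-1})/M_j$ is locally embeddable into a $C_{2^r}$-extension at every place, so the Hasse principle for these embedding problems (\cite{GJ}, \cite{AFSS}, \cite[\S16.5]{FJ}) yields a global $C_{2^r}$-extension of $M_j$ containing $M_j(\sqrt{-1})$, as required.

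The main obstacle is precisely this simultaneous local construction at the dyadic primes: one must arrange the three quadratic subfields so that each supports the full $C_{2^r}$-tower over $M_j(\sqrt{-1})$, and the three conditions interact through the multiplicative relation among $a,b,ab$. Handling them all at once, rather than one subfield at a time, is what the ``root modulo every prime'' idea accomplishes, and checking that the resulting local data stays compatible with the Grunwald--Wang dichotomy for each of the three quadratic extensions is the technical core of the proof.
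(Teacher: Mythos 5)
Your overall strategy---reduce to local solvability at the dyadic primes, use the three quadratic subfields of a biquadratic extension to meet all local conditions at once, realize the local data by Grunwald--Wang, and conclude by a local-global principle for the embedding problem---is essentially the paper's (which, over a general $K$, replaces your $\sqrt{-1},\sqrt{2},\sqrt{-2}$ by the quadratic subextensions of $K_\fp(\zeta_{2^{s+1}})/K_\fp$, where $s$ is maximal with $\eta_s:=\zeta_{2^s}+\zeta_{2^s}^{-1}\in K$). But two of your steps do not work as stated. First, the ``elementary fact that at least one of $-1,2,-2$ is a square in any given completion'' is false precisely at the dyadic completions where you invoke it: in $\qq_2$ none of $-1$, $2$, $-2$ is a square. (The fact holds at odd primes, and that is where the paper uses it---in Lemma \ref{lem:2-spec}, to find a degree-one prime in one of the $M_j$---not in the present lemma.) What actually makes the dyadic step work is the opposite configuration: one wants each $M_j\otimes_K K_v$ to be a \emph{proper} quadratic extension of $K_v$, namely a quadratic subfield of $K_v(\zeta_{2^{s+1}})$. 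Then the subfield containing $\sqrt{-1}$ trivializes its local embedding problem, while each of the others either has procyclic cyclotomic $2$-tower or contains $\eta_{s+1}$ and hence $\qq_2(\sqrt{2})$, over which $-1$ is a sum of two squares; by \cite[Proposition 16.5.1]{FJ} together with \cite[Thm.\ 3]{GJ} (or \cite{AFSS}) this clears all the higher layers. Your requirement that $M_j\otimes_K K_v$ contain $\zeta_{2^r}$ is neither achievable for all three subfields simultaneously nor needed.

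Second, and more seriously, the ``Hasse principle for these embedding problems'' is not unconditional: its possible failure is exactly the Grunwald--Wang special case, so local solvability everywhere does not by itself produce the global $C_{2^r}$-extension of $M_j$. The paper must additionally arrange, via auxiliary odd primes that split completely in $K(\zeta_{2^{s+1}})$ and are inert in $M_1$, resp.\ $M_2$, that $M/K$ is linearly disjoint from $K(\zeta_{2^{s+1}})/K$; only under this hypothesis does \cite[Theorem 2]{GJ} convert local solvability into global solvability (Frattini-ness then makes the solution proper, i.e., a genuine $C_{2^r}$-extension). Your single auxiliary condition only guarantees $\sqrt{-1}\notin M$, which is strictly weaker and does not exclude the special case; without the stronger disjointness built into the construction of $M$, the final step of your argument is unjustified.
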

\begin{proof}
Let $s=s(K)$ be the largest integer for which $\eta_s:=\zeta_{2^s}+\zeta_{2^s}^{-1}$ is in $K$. Let $S_0(K)$ be the set of even primes $\fp$ of $K$ for which $K_\fp(\zeta_{2^{s+1}})/K_\fp$ is non-cyclic.

By the Grunwald--Wang theorem\footnote{Note that the special case of Grunwald--Wang does not apply to quadratic extensions, and those are the only ones considered here.} \cite[Chp.\ X, Thm.\ 5]{AT}, there exist two disjoint quadratic extensions $M_1,M_2/K$  whose completions satisfy:
\begin{enumerate}
    \item[(i)] $(M_1)_\fp$ and $(M_2)_\fp$  are disjoint quadratic extensions with compositum $K_\fp(\zeta_{2^{s+1}})$, at every $\fp\in S_0(K)$. 
    \item[(ii)]  there exist odd  primes $\fp_1$ and $\fp_2$ which split completely in $K(\zeta_{2^{s+1}})$ and are inert in $M_1$ and $M_2$, respectively
\end{enumerate}
Let $M=M_1\cdot M_2$, and let $M_3$ be the third quadratic subextension of $M/K$. The disjointness of $M/K$ and $K(\zeta_{2^{s+1}})/K$  follows from (ii) since in each of the extensions $M_j/K$, $j=1,2,3$, one of the primes $\fp_1,\fp_2$ is inert while being split completely in $K(\zeta_{2^{s+1}})/K$.

We claim that for each of the quadratic fields $M'=M_j$, $j=1,2,3$ the extension $M'(\sqrt{-1})/M'$ embeds into a $C_{2^r}$-extension $N'/M'$. Setting $\Gamma_\nu:=\Gal(M_\nu'(\sqrt{-1})/M_\nu')$, we first check that the corresponding local embedding problem $\pi_\nu:\pi^{-1}(\Gamma_\nu)\ra \Gamma_\nu$  is solvable at all places $\nu$ of $M'$. 
Since $M'$ is totally complex, $M'(\sqrt{-1})/M'$ is unramified away from even primes, and hence $\pi_\nu$ is solvable for every place $\nu$ which does not lie over $2$.
Next note that since $M/K$ is disjoint from $K(\zeta_{2^{s+1}})/K$ while $M_\fp/K_\fp$ and $K_\fp(\zeta_{2^{s+1}})/K_\fp$ are not disjoint for  $\fp\in S_0(K)$ by (i), we have $s(M')=s(K)$ and $S_0(M')=\emptyset$. 
Since $S_0(M')=\emptyset$, for every even prime $\fp$ of $M'$, either $M'_\fp(\mu_{2^\infty})/M_\fp'$ is cyclic or $\eta_{s+1}\in M'_\fp$, where $s=s(M')$. In the former case the embedding problem $\pi_\fp$ is clearly solvable. In the latter case $M'_\fp\supseteq \mathbb Q_2(\sqrt{2})$. Since the quaternion algebra $(-1,-1)_{\qq_2}$ is split by the extension $\qq_2(\sqrt{-1})/\qq_2$, one has $-1=a^2+b^2$ for some\footnote{Alternatively, this follows from the congruence $-1\equiv 1^2+(2+\sqrt{2})^2$ mod $4\sqrt{2}$ in $\mathbb Z_2[\sqrt{2}]$ and Hensel's lemma.}  $a,b\in M'_\nu$. 
Since $-1$ is a sum of two squares,  $M'_\fp(\sqrt{-1})/M'_\fp$ embeds into a $C_4$-extension \cite[Proposition 16.5.1]{FJ}, which further implies that $M_\fp'(\sqrt{-1})/M'_\fp$ embeds into a $C_{2^r}$-extension  \cite[Thm.\ 3]{GJ} or \cite[Proof of Thm.\ 5]{AFSS}. In summary, $\pi_\nu$ is solvable for all places $\nu$ of $M'$. Finally, as in \cite[$17^\circ$.(i)]{GJ}, since $M'/K$ is linearly disjoint from $K(\zeta_{2^{s+1}})$ and since $\pi:C_{2^r}\ra \Gal(M'(\sqrt{-1})/M')$ is locally solvable at all places, $\pi$ is  solvable by  \cite[Theorem 2]{GJ}. Since $\pi$ is Frattini, every  solution is proper, as desired. 
\end{proof}

\begin{proof}[Proof of Proposition \ref{prop:cyclic}]
Note that $r:=v_2(d)>1$, and set $d':=d/2^r$ and $e':=e/2$. 
By Lemma \ref{lem:2-constr}, there exist quadratic extensions $M_j/K(\zeta_e)$, $j=1,2,3$ contained in a common biquadratic extension of $K(\zeta_e)$, and $C_{2^r}$-extensions $N_j'/M_j$ containing the quadratic extension $M_j(\sqrt{-1})/M_j$. 
Note that since $d'$ and $e'$ have the same prime divisors,   $K(\zeta_{d'})/K(\zeta_e)$ and $M_j/K(\zeta_e)$ are linearly disjoint, and the condition $\mu_{d'}\cap M_j=\mu_{e'}$ holds. Thus we may apply Lemma \ref{lem:construct1} with the base field $M_j$ (instead of $K$) to obtain a $C_{d'}$-extension $E_j^{(e',d')}/F_j$, with ramification index $e'$ and residue extension $M_j(\zeta_{d'})/M_j$, of a complete field $F_j=M_j((y_j))$.

    
Write $N_j'=N''_j(\sqrt{a_j})$ for the $C_{2^{r-1}}$-subextension $N''_j/M_j$ of $N_j'/M_j$, and $a_j\in N_j''$. 
Then $N_j''F_j(\sqrt{a_jy_j})$ is a $C_{2^r}$-extension of $F_j$ with ramification index $2$ which is linearly disjoint from  $E_j^{(e',d')}/F_j$, as the latter is Galois of odd degree.  Then $E_j:= E_j^{(e',d')}N_j''(\sqrt{a_jy_j})$ is a $C_{d}$-extension with ramification index $e=2e'$, containing $N_j''
\supseteq M_j(\sqrt{-1})$ and $M_j(\zeta_{d'})$, as desired.  
\end{proof}

\begin{lemma}\label{lem:2-spec}
Let $e\divides d$ be positive integers with the same prime divisors such that $\mu_d\cap K(\zeta_e)=\mu_e$ and $v_2(d)>v_2(e)=1$. 
Let $E/K(t)$ be an extension with the above completions 
$E_j^{(e,d)}/F_j^{(e,d)}$, $j=1,2,3$. Then every  $C_d$-extension of $K_\fp$ with ramification index dividing $e$ is a specialization of $EK_\fp/K_\fp(t)$ for all but finitely many primes $\fp$ satisfying $\mu_d\cap K_\fp = \mu_e$.  
\end{lemma}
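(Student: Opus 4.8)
The plan is to mirror the proof of Lemma~\ref{lem:spec-odd}, realizing each target extension through one of the three branch points supplied by Proposition~\ref{prop:cyclic} and invoking Propositions~\ref{mainlemma_kln}(iii) and~\ref{thm:KLN2}; the two genuinely new ingredients, compared with the odd case, are a covering argument over the biquadratic field $M=M_1M_2M_3$ and a $2$-adic inertness argument that explains the presence of $\sqrt{-1}$ in the residue fields. First I would carry out the usual reductions from the beginning of Section~\ref{sec:lemma_pf2}: by Lemma~\ref{lem:pv} it suffices to treat $K$-rational specializations at non-branch points one prime at a time, and we may discard the finitely many even primes, the primes ramifying in or exceptional for the fixed data $N_1,N_2,N_3,M_1,M_2,M_3$ and $E/K(t)$, and the primes at which specialization at a branch point could ramify. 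So fix an odd prime $\fp$ outside this finite set with $\mu_d\cap K_\fp=\mu_e$, and write $q=N(\fp)$, $r=v_2(d)\ge 2$, $d=2^rd'$, $e=2e'$ with $d',e'$ odd. The decisive numerical observation is that $\gcd(d,q-1)=e$ together with $v_2(e)=1<r$ forces $\min(r,v_2(q-1))=1$, hence $v_2(q-1)=1$, i.e.\ $q\equiv 3\pmod 4$; consequently $\sqrt{-1}\notin K_\fp$ and, as $\fp$ is odd, $K_\fp(\sqrt{-1})/K_\fp$ is the \emph{unramified} quadratic extension.

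Next, fix a target: a $C_d$-extension $L/K_\fp$ of ramification index $e''\mid e$. Since $L$ is cyclic, the subextension fixed by the (unique) order-$e$ subgroup of $\Gal(L/K_\fp)$ is the unique unramified extension $U/K_\fp$ of degree $d/e$; note this holds for every $C_d$-extension of ramification dividing $e$. As in Lemma~\ref{lem:spec-odd}, once I exhibit an index $j\in\{1,2,3\}$ and a degree-$1$ prime $\fp'$ of $M_j=K(t_j)$ over $\fp$ that is \emph{inert} in the residue extension $N_j/M_j$ — so that $(N_j)_{\fp'}=U$ — Proposition~\ref{mainlemma_kln}(iii) produces a $t_0$ whose specialization at $\fp$ is a $C_d$-extension of ramification $e$ with degree-$(d/e)$ subextension $U$, and Proposition~\ref{thm:KLN2} then yields \emph{all} $C_d$-extensions of $K_\fp$ of ramification dividing $e$ with that subextension equal to $U$. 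By the previous sentence this is the entire target class, so everything reduces to producing such a pair $(j,\fp')$.

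To find $j$ I use the biquadratic structure, and this is where three branch points are needed. Since $\mu_e\subseteq K_\fp$, the prime $\fp$ splits completely in $K(\zeta_e)$; choose a degree-$1$ prime $\fP$ of $K(\zeta_e)$ over $\fp$. Its Frobenius $\rho$ in $\Gal(M/K(\zeta_e))\cong C_2\times C_2$ lies in at least one of the three order-$2$ subgroups $\Gal(M/M_j)$, because every element of $C_2\times C_2$ lies in some such subgroup (the identity in all three, each involution in exactly one). For that $j$ the prime $\fP$ splits in $M_j/K(\zeta_e)$, giving a degree-$1$ prime $\fp'$ of $M_j$ over $\fp$.

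The hard part — and the reason Proposition~\ref{prop:cyclic} builds $\sqrt{-1}$ into the residue fields — is verifying that this $\fp'$ is inert in $N_j/M_j$. Writing $N_j=N_j''\cdot M_j(\zeta_{d'})$ as the compositum of its $2$-part $N_j''$ (cyclic of degree $2^{r-1}$) and its coprime odd part $M_j(\zeta_{d'})$ (degree $d'/e'$), it suffices to check inertness in each. For the odd factor, since $d'$ and $e'$ have the same prime divisors and $\gcd(d',q-1)=e'$, a standard computation with units shows the multiplicative order of $q$ modulo $d'$ is exactly $d'/e'$; hence $(M_j(\zeta_{d'}))_{\fp'}=K_\fp(\zeta_{d'})$ is unramified of degree $d'/e'$ and $\fp'$ is inert. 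For the $2$-part, recall that $M_j(\sqrt{-1})$ is the unique quadratic subextension of the cyclic extension $N_j''/M_j$; since $q\equiv 3\pmod 4$, its completion $(M_j(\sqrt{-1}))_{\fp'}=K_\fp(\sqrt{-1})$ is the nontrivial unramified quadratic, so the Frobenius of $\fp'$ maps nontrivially to $\Gal(M_j(\sqrt{-1})/M_j)=C_2$. In a cyclic group $C_{2^{r-1}}$ the elements mapping nontrivially to the quotient $C_2$ are precisely the generators; hence Frobenius generates $\Gal(N_j''/M_j)$ and $\fp'$ is inert in $N_j''$. Combining, $\fp'$ is inert in $N_j/M_j$, i.e.\ $(N_j)_{\fp'}=U$, which completes the proof. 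The main obstacle is exactly this $2$-adic step: it is only because $N_j''$ contains $\sqrt{-1}$ that inertness in $N_j''$ becomes automatic for \emph{every} admissible $\fp$, and it is this automaticity which allows the three branch points to cover all primes with $\mu_d\cap K_\fp=\mu_e$ simultaneously.
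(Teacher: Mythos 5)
Your proof is correct and follows essentially the same route as the paper's: exclude $\sqrt{-1}$ from $K_\fp$, use the biquadratic covering trick to find a $j$ with a degree-$1$ prime of $M_j$ over $\fp$, verify that this prime is inert in $N_j/M_j$ by treating the odd part cyclotomically and the $2$-part via inertness in $M_j(\sqrt{-1})$, and then conclude with Propositions \ref{mainlemma_kln}(iii) and \ref{thm:KLN2}. You merely make explicit a few steps the paper leaves implicit (the $q\equiv 3 \pmod 4$ computation and the Frobenius-in-$C_2\times C_2$ argument), so no substantive difference.
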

\begin{proof}
As always, avoiding finitely many primes, we may assume $\fp$ is coprime to $e$. Since $4\divides d$, $v_2(e)=1$ and $\mu_d\cap K_\fp=\mu_e$, we deduce that $K_\fp$ does not contain $\sqrt{-1}$. 
It follows that there is no  $C_{d}$-extension of $K_\fp$ with  ramification index divisible by $4$. 


Letting $M_j$  denote the residue field of $F_j=F_j^{(e,d)}$, we recall that  $M_j/K(\zeta_e)$, $j=1,2,3$ are quadratic extensions contained in a common biquadratic extension  $M/K(\zeta_e)$. Avoiding the finitely many primes ramified in $M$ and recalling the $\fp$ splits completely in $K(\zeta_e)$, a prime $\fp'$ of $K(\zeta_e)$ lying over $\fp$ has to split \footnote{\DN{This idea is inspired by a construction of polynomials with roots modulo $\fp$ for all $\fp$, cf.\ \cite{Sonn}.}} in one of the quadratic fields $M_j$, $j=1,2,3$,  say in $M_1$. Let $\fP$ be a prime of $M_1$ lying over $\fp'$. Moreover, since the residue extension $N_1/M_1$ of $E_1/F_1$ contains $M_1(\zeta_{d'})$, where  $d'=d/2^{r}$ and $r=v_2(d)$, its residue degree at $\fP$  is divisible by $d'/(e/2)$. Moreover,   $N_1/M_1$ contains a $C_{2^{r-1}}$-extension containing   the quadratic subextension $M_1(\sqrt{-1})/M_1$. Since $\fP$ is inert in $M_1(\sqrt{-1})/M_1$, it is inert in the above $C_{2^{r-1}}$-extension, and hence the residue degree of $N_1/M_1$ at $
\fP$ is at least $2^{r-1}\cdot d'/(e/2) = d/e$. Since $E_1/F_1$ is a $C_d$-extension with ramification index $e$, this forces the residue degree to equal $d/e$.

Since $\fp$ has a degree $1$ prime over it in $M_1$, as in the proof of Lemma \ref{lem:spec-odd},  Proposition \ref{mainlemma_kln}iii)   gives (infinitely many) $t_0\in K$ such that the completion $E_{t_0}/K$ at $\fp$ is ramified of index $e$ and its residue degree is the same as that of $N_1/M_1$ at $\fP$, which by the above is  $d/e$. 
Proposition \ref{thm:KLN2} then shows that furthermore  {\it all}   $C_d$-extensions with ramification index $e'$ dividing $e$ are specializations of $E\cdot K_\fp/K_\fp(t)$. 
%
\end{proof}

\begin{proof}[Proof of Theorem \ref{thm:main-cyclic}]
Consider the pairs $(e,d)$ of positive integers having the same prime divisors, such that $e\divides d$,  and $d\divides |G|$, and $\mu_d\cap K(\zeta_e)=\mu_e$. Since $G$ has a generic polynomial over $K$, Saltman's theorem \cite[Theorem 5.9]{S82} implies that there exists a $G$-extension $E/K(t)$ having completions isomorphic to the extensions $E^{(e,d)}/F^{(e,d)}$ of Lemma \ref{lem:construct1} for each of the above  $(e,d)$ satisfying $v_2(e)=v_2(d)\leq 1$ or $v_2(e)>1$, and to the extensions $E_j^{(e,d)}/F_j^{(e,d)}$, $j=1,2,3$ of Lemma \ref{lem:2-constr} for each of the above $(e,d)$  satisfying  $v_2(e)= 1<v_2(d)$. By Lemmas \ref{lem:spec-odd} and   \ref{lem:2-spec}, every cyclic extension of $K_\fp$ of degree $d$ and ramification index dividing $e$ \JK{is a specialization of $E\cdot K_\fp/K_\fp(t)$} for all but finitely many primes $\fp$ with $\mu_d\cap K_\fp=\mu_e$. 

It remains to recall that the primes $\fp$ for which $K_\fp$ admits a tamely ramified $C_d$-extension with ramification index $e'$ are those for which $\mu_d \cap K_\fp = \mu_e$ with $e'\divides e$. Thus, the above implies that $E/K(t)$ specializes to all  $C_d$-extensions of $K_\fp$ for all but finitely many primes $\fp$. 
\end{proof}

\appendix
\section{An application: Groups of parametric dimension $1$}
Our results also allow a near-full classification of groups of {\it parametric dimension} $1$ over a number field, a problem that has been investigated in a series of previous papers. (e.g., \cite{KL18}, \cite{KLN}). Here, the parametric dimension $\textrm{pd}_K(G)$ of a group $G$ over any field $K$ is defined as the minimal 
integer $d$ for which there exist finitely many $G$-extensions (of \'etale algebras) $E_i/F_i$, $i=1,\dots, r$, of transcendence degree $\le d$ over $K$ such that every (\'etale algebra ) $G$-extension of $K$ (equivalently, every Galois extension of $K$ with Galois group embedding into $G$) occurs as a specialization of some $E_i/F_i$.

The following connects parametric and local dimension:
\begin{prop}
For all groups $G$ and number fields $K$, one has $\textrm{pd}_K(G) \ge \textrm{ld}_K(G)$.
\end{prop}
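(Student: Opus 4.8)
The plan is to show that any finite family $E_i/F_i$ ($i=1,\dots,r$) of $G$-extensions of transcendence degree $\le d:=\textrm{pd}_K(G)$ that witnesses the parametric dimension also witnesses $\textrm{ld}_K(G)\le d$; since the asserted inequality is vacuous when $\textrm{pd}_K(G)=\infty$, I assume it finite and fix such a family realizing \emph{every} Galois extension of $K$ with group embedding into $G$ among its $K$-rational specializations. The one nontrivial ingredient I would isolate is a \emph{globalization} statement: for all but finitely many primes $\fp$ of $K$, every Galois $L/K_\fp$ with $\Gal(L/K_\fp)$ embedding into $G$ arises as a completion $M_\fp$ of some global Galois extension $M/K$ with $\Gal(M/K)$ embedding into $G$. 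Granting this, the proof closes immediately: by the parametric property, such an $M$ equals $(E_i)_{t_0}$ for some $i$ and some $K$-rational (degree-$1$) place $t_0$ of $F_i$; since $K\subseteq K_\fp$, the place $t_0$ is also $K_\fp$-rational on $F_i\cdot K_\fp$, and the standard compatibility of specialization with base change and completion (specialize then complete $=$ base-change then specialize) gives $(E_i\cdot K_\fp)_{t_0}\cong \big((E_i)_{t_0}\big)_\fp=M_\fp\cong L$. Thus $L$ is a $K_\fp$-specialization of $E_i\cdot K_\fp/F_i\cdot K_\fp$, which is exactly the requirement in the definition of $\textrm{ld}_K(G)\le d$.

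For the globalization statement I would first discard the finitely many primes $\fp$ lying over a rational prime dividing $|G|$. For any remaining $\fp$ and any $L/K_\fp$ with $H:=\Gal(L/K_\fp)\le G$, the residue characteristic of $\fp$ is coprime to $|H|$, so $L/K_\fp$ is \emph{tamely} ramified and $H$ is metacyclic, an extension of the cyclic quotient $H/\langle\tau\rangle$ (residue degree $f$) by the cyclic inertia group $\langle\tau\rangle$ (ramification index $e$), with Frobenius acting on $\mu_e$ as in Section \ref{sec:local}. Realizing $L/K_\fp$ globally with the same group is then a Grunwald problem for the solvable group $H$ at the single prime $\fp$, which I would solve in two abelian layers: first a cyclic degree-$f$ extension $F/K$ in which $\fp$ is inert, whose completion is forced to be the unramified degree-$f$ extension matching $L^{\langle\tau\rangle}/K_\fp$; then a solution of the embedding problem $H\twoheadrightarrow\Gal(F/K)$ with cyclic kernel $\langle\tau\rangle$ prescribing the totally ramified local behaviour at $\fp$. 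Each layer is an abelian-kernel Grunwald--Wang realization with a single prescribed local condition, of precisely the kind already used in Lemmas \ref{lem:construct1} and \ref{lem:2-constr}, and is solvable as soon as $\fp$ avoids the finite ``special set'' $S_0$ of even primes where $K_\fp(\zeta_{2^{s+1}})/K_\fp$ is non-cyclic; cf.\ \cite[Chp.\ X, Thm.\ 5]{AT}.

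The main obstacle is exactly this globalization step, and inside it the delicate point is the classical Grunwald--Wang ``special case''. The saving feature, which I would emphasize, is that $\textrm{ld}_K(G)$ only demands the local conclusion for \emph{all but finitely many} $\fp$: the primes dividing $|G|$, the primes ramifying in the auxiliary cyclic layer $F/K$, and the special set $S_0$ are all finite in number and may be excluded at no cost. Away from them every relevant local extension is tame with metacyclic group and is globalized by the two-layer construction, so the fixed parametric family witnesses $\textrm{ld}_K(G)\le d=\textrm{pd}_K(G)$, giving the claim.
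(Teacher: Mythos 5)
Your reduction coincides with the paper's: base-change a $K$-rational specialization to $K_\fp$, discard the finitely many primes dividing $|G|$ so that every relevant local extension is tame and hence has metacyclic Galois group, and reduce the whole proposition to the globalization statement that, away from finitely many $\fp$, every such local extension is the completion of a global Galois extension with the same (metacyclic) group. You have isolated the crux correctly; the gap is in your proof of it. The first layer (a cyclic degree-$f$ extension $F/K$ in which $\fp$ is inert) is indeed a Grunwald--Wang problem at a single odd prime and is unproblematic. But the second layer is \emph{not} ``an abelian-kernel Grunwald--Wang realization'': you must produce a cyclic degree-$e$ extension $M/F$ that is moreover Galois over $K$ with a prescribed metacyclic group $H$ --- where $\Gal(F/K)$ acts on the kernel $\langle\tau\rangle$ through the cyclotomic character and the extension $1\to\langle\tau\rangle\to H\to \Gal(F/K)\to 1$ need not split --- \emph{and} with prescribed completion at $\fp$. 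That is a Galois-equivariant embedding problem with cyclic kernel and a local condition, i.e.\ exactly the Grunwald problem for the metacyclic group $H$; it is the statement to be proved, not an instance of \cite[Chp.\ X, Thm.\ 5]{AT}. Nor is it ``of the kind used in Lemmas \ref{lem:construct1} and \ref{lem:2-constr}'': the former is a purely local construction over a Laurent series field, and the latter relies on the nontrivial local-global theory of \cite{GJ} and \cite{AFSS} for embedding quadratic extensions into cyclic $2$-power extensions together with a Frattini argument --- machinery specific to that situation and not a substitute for the general metacyclic case.

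The paper closes precisely this gap by a single citation: metacyclic groups are supersolvable, and by the theorem of Harpaz and Wittenberg \cite{HW} the Grunwald problem for a supersolvable group is solvable outside a finite set of primes of $K$. If you wish to avoid \cite{HW}, you need some other genuinely non-abelian local-global input at this point (the two abelian layers do not combine by themselves); with such an input, the rest of your argument --- the compatibility of specialization with base change and completion, and the finiteness of all excluded sets of primes --- goes through exactly as in the paper.
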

\begin{proof}
Let $E_i/F_i$, $i=1,\dots, r$, be finitely many $G$-extensions of transcendence degree $d\le \textrm{pd}_K(G)$, parameterizing all $G$-extensions of $K$ as in the definition above. In particular, for every metacyclic subgroup $U\le G$, every Galois extension $L/K$ with Galois group $U$ occurs as a $K$-specialization of some $E_i/F_i$. A fortiori, $L\cdot K_\fp/K_\fp$ occurs as a $K_\fp$-specialization of $E_i/F_i$, for all primes $\fp$ of $K$. Note that metacyclic groups are necessarily supersolvable, whence by a famous result of Harpaz and Wittenberg (\cite{HW}), for all but finitely many primes $\fp$ of $K$, all $U$-extensions of $K_\fp$ do in fact occur as completions of some $U$-extension of $K$. On the other hand, all tame Galois extensions of $K_\fp$ are in fact metacyclic. Therefore, up to possibly excluding finitely many more primes $\fp$, every Galois extension of $K_\fp$ of Galois group embedding into $G$ is a $K_\fp$-specialization of some $E_i/F_i$, for all but finitely many $\fp$. Thus, $\textrm{pd}_K(G) \ge \textrm{ld}_K(G)$.
\end{proof}

In particular, groups of parametric dimension $1$ over e.g. $K=\mathbb{Q}$ are necessarily of the form $C_P\rtimes C_Q$ as in Theorem \ref{thm:hgmain}. We may narrow the list down even further, by combining this result with the methods of \cite{KL18}. In particular, \cite[Theorem 5.2]{KL18} excludes all cyclic groups except the ones of prime order. Note furthermore that, for $U\le G$, one has $\textrm{pd}_K(U) \le \textrm{pd}_K(G)$ by simply applying the Galois correspondence. We are thus even reduced to groups of the form $C_p\rtimes C_q$ with both $p$ and $q$ prime (or $=1$), and $q|p-1$. Furthermore, \cite[Theorem 5.1]{KL18} excludes all groups of order coprime to $6$. 

\JK{\begin{remark} Invoking \cite{KL18} as above requires a certain amount of care; indeed, the cited theorems are a priori stated only for $\mathbb{Q}$-regular extensions $E_i/\mathbb{Q}(t)$ with purely transcendental base field $\mathbb{Q}(t)$, whereas we here also need to allow arbitrary extensions of transcendence degree $1$ function fields. This is, however, possible via slight adjustments of the proofs in \cite[Section 6]{KL18}. Indeed, the methods apply in complete analogy for $\mathbb{Q}$-regular extensions $E_i/F_i$ with arbitrary base field $F_i$. To drop also the regularity assumption, start by picking, for each non-regular extension $E_i/F_i$, a prime which is non-split in the constant extension, and restrict attention to only those $G$-extensions of $\mathbb{Q}$ in which all those primes split. Obviously, no such extension can arise as a specialization of some non-regular $E_i/F_i$, so from here on, we may again assume that all $E_i/F_i$ are $\mathbb{Q}$-regular. Now the core argument of the cited results of \cite{KL18} is the fact that (for the groups $G$ in question) there is some non-trivial normal subgroup $N\triangleleft G$ such that every properly solvable embedding problem induced by $G\to G/N$ has infinitely many proper solutions, whereas the given set of regular extensions can only specialize finitely many of those. It then suffices to construct (infinitely many) solutions in which the finitely many primes chosen above are all split; since all groups $G$ in question are solvable, this can be obtained, e.g.\ from Shafarevich's method, see, e.g., \cite{FL}.
\end{remark}}

We have thus obtained:
\begin{theorem}
\label{thm:param}
Let $G$ be a finite group with $\textrm{pd}_\mathbb{Q}(G)=1$. Then $G$ is isomorphic to $C_p$ (for some prime $p$), $D_p$ (for some prime $p\ge 3$) or $C_p\rtimes C_3$ (for some prime $p\equiv 1$ mod $3$).
\end{theorem}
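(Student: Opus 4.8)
The plan is to assemble the reductions developed in this appendix into one deduction. First I would use the inequality $\textrm{pd}_{\mathbb{Q}}(G)\ge \textrm{ld}_{\mathbb{Q}}(G)$ established above: since a group of parametric dimension $1$ is in particular nontrivial, the hypothesis gives $1\le \textrm{ld}_{\mathbb{Q}}(G)\le \textrm{pd}_{\mathbb{Q}}(G)=1$, so $\textrm{ld}_{\mathbb{Q}}(G)=1$. As $\mathbb{Q}\subset\mathbb{R}$ is a real field whose cyclotomic extensions $\mathbb{Q}(\zeta_p)$ are pairwise distinct, Theorem \ref{thm:hgmain_strong} (the equivalence of i) and iv)) then applies and yields $G\cong C_P\rtimes C_Q$ with $P,Q$ coprime, each $\le 2$ or an odd prime power, and $C_Q$ acting faithfully on $C_P$ as a subgroup of $\Aut(C_P)$. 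This fixes the coarse shape of $G$.

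Next I would cut this shape down using the monotonicity $\textrm{pd}_{\mathbb{Q}}(U)\le \textrm{pd}_{\mathbb{Q}}(G)$ for $U\le G$, which follows from the Galois correspondence. Applied to the cyclic subgroups $C_P$ and $C_Q$, it gives $\textrm{pd}_{\mathbb{Q}}(C_P)=\textrm{pd}_{\mathbb{Q}}(C_Q)=1$, whence by \cite[Theorem 5.2]{KL18} both $P$ and $Q$ are prime (or $Q=1$). Writing $G=C_p\rtimes C_q$ with $p$ prime and $q$ prime or $1$, faithfulness of the action together with $\Aut(C_p)\cong C_{p-1}$ forces $q\mid p-1$ whenever $q>1$. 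If $q=1$ we are in the case $G\cong C_p$, which already lies in the asserted list.

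In the remaining non-cyclic case $q>1$ I would invoke \cite[Theorem 5.1]{KL18}, which rules out parametric dimension $1$ for the non-abelian groups under consideration of order coprime to $6$. Since $q\mid p-1$ forces $q<p$, the possibility $q\ge 5$ would entail $p>q\ge 5$, so that $|G|=pq$ is coprime to $6$ and $G$ is excluded; hence $q\in\{2,3\}$. The case $q=2$ gives $G\cong C_p\rtimes C_2\cong D_p$ with $p\ge 3$ an odd prime, and the case $q=3$ gives $G\cong C_p\rtimes C_3$ with $p\equiv 1\pmod 3$. Together with the cyclic case this exhausts the three families, proving the theorem.

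The main obstacle, and the only genuinely delicate point, is that \cite[Theorems 5.1 and 5.2]{KL18} are proved only for $\mathbb{Q}$-regular extensions with a purely transcendental base field $\mathbb{Q}(t)$, whereas $\textrm{pd}_{\mathbb{Q}}$ as defined here allows arbitrary, possibly non-regular, transcendence-degree-$1$ function-field extensions $E_i/F_i$. As explained in the remark preceding this theorem, this is remedied by first discarding the non-regular constituents -- restricting to those $G$-extensions of $\mathbb{Q}$ in which a prime non-split in each constant extension splits, none of which can be specializations of a non-regular $E_i/F_i$ -- and then re-running the core argument of \cite{KL18}: for the relevant solvable embedding problems one produces, via Shafarevich's construction, infinitely many proper solutions in which the finitely many chosen primes are all split, only finitely many of which can arise from the given regular family. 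Verifying that these modifications leave the conclusions of \cite[Theorems 5.1 and 5.2]{KL18} intact is the heart of the matter; the remaining steps are a formal chaining of the cited results.
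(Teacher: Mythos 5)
Your proposal is correct and follows essentially the same route as the paper: the inequality $\textrm{pd}_\mathbb{Q}(G)\ge\textrm{ld}_\mathbb{Q}(G)$ combined with Theorem \ref{thm:hgmain_strong} to get the shape $C_P\rtimes C_Q$, then the subgroup monotonicity of $\textrm{pd}$ together with \cite[Theorems 5.1 and 5.2]{KL18} to force $P,Q$ prime and $Q\in\{1,2,3\}$, with the regularity/rational-base issue handled exactly as in the remark preceding the theorem. The only difference is that you spell out the case analysis (e.g.\ $q\ge 5$ forcing $|G|$ coprime to $6$) more explicitly than the paper does.
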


\JK{Note also that comparison of the lists of groups in Theorems \ref{thm:hgmain_strong} and \ref{thm:param} gives examples of groups for which $\textrm{pd}_\mathbb{Q}(G) > \textrm{ld}_\mathbb{Q}(G)$, something that was anticipated, but not proven in \cite[Appendix A]{KN}.}

In fact, we conjecture that the only groups $G$ with $\textrm{pd}_\mathbb{Q}(G)=1$ are $G=C_2, C_3$ and $S_3$. Combining Theorem \ref{thm:param} with the methods of \cite{KL19} would get us much closer to this goal, at least conditionally on the abc-conjecture, although going into full detail would lead us too far away.

\end{document}